\numberwithin{equation}{section}
\theoremstyle{plain}
\newtheorem{lemma}{Lemma}[section]
\newtheorem{theorem}[lemma]{Theorem}
\newtheorem{proposition}[lemma]{Proposition}
\newtheorem{corollary}[lemma]{Corollary}
\theoremstyle{definition}
\newtheorem{definition}[lemma]{Definition}
\newtheorem{example}[lemma]{Example}
\theoremstyle{remark} 
\newtheorem{remark}[lemma]{Remark} 
\newcommand{\Coker}{\operatorname{Coker}}
\newcommand{\colim}{\operatorname{colim}}
\newcommand{\End}{\operatorname{End}}
\newcommand{\Ext}{\operatorname{Ext}}
\newcommand{\fHom}{\operatorname{\mathcal{H}\!\!\;\mathit{om}}}
\newcommand{\Hom}{\operatorname{Hom}}
\newcommand{\id}{\operatorname{id}}
\newcommand{\Ker}{\operatorname{Ker}}
\newcommand{\Loc}{\operatorname{\mathsf{Loc}}}
\renewcommand{\mod}{\operatorname{\mathsf{mod}}}
\newcommand{\Mod}{\operatorname{\mathsf{Mod}}}
\newcommand{\PHom}{\operatorname{PHom}}
\newcommand{\sHom}{\underline{\Hom}}
\newcommand{\Spec}{\operatorname{Spec}}
\newcommand{\Spc}{\operatorname{Spc}}
\newcommand{\StMod}{\operatorname{\mathsf{StMod}}}
\newcommand{\stmod}{\operatorname{\mathsf{stmod}}}
\newcommand{\Supp}{\operatorname{Supp}}
\newcommand{\supp}{\operatorname{supp}}
\newcommand{\Thick}{\operatorname{\mathsf{Thick}}}
\newcommand{\Tr}{\operatorname{Tr}}
\newcommand{\WInj}{\operatorname{\mathsf{WInj}}}
\newcommand{\winj}{\operatorname{\mathsf{winj}}}
\newcommand{\ac}{\mathrm{ac}} 
\newcommand{\cw}{\mathrm{cw}}
\newcommand{\kpac}{k\text{-}\mathrm{pac}}
\newcommand{\pac}{\mathrm{pac}}
\newcommand{\pur}{\mathrm{pur}}
\newcommand{\col}{\colon}
\newcommand{\lto}{\longrightarrow}
\newcommand{\Ra}{\Rightarrow}
\newcommand{\ua}{{\uparrow}}
\newcommand{\wt}{\widetilde}
\newcommand{\xra}{\xrightarrow}
\def\mcV{\mathcal{V}}
\def\sfc{\mathsf c}
\def\sfC{\mathsf C}
\def\sfD{\mathsf D}
\def\sfK{\mathsf K}
\def\sfP{\mathsf P}
\def\sfR{\mathsf R}
\def\sfS{\mathsf S} 
\def\sfT{\mathsf T} 
\def\sfU{\mathsf U}
\def\sfV{\mathsf V}
\def\bbZ{\mathbb Z}
\newcommand{\bsi}{\boldsymbol{i}} 
\newcommand{\bsp}{\boldsymbol{p}}
\newcommand{\bst}{\boldsymbol{t}}
\newcommand{\fp}{\mathfrak{p}}
\newcommand{\eps}{\varepsilon}
\newcommand{\gam}{\varGamma}
\def\Si{\Sigma}
\def\one{\mathbf 1}
\title[Module categories for group algebras]{Module categories for
  group algebras \\ over commutative rings}
\author[Benson, Iyengar, Krause]{Dave Benson, Srikanth B. Iyengar, Henning Krause\\
with an appendix by Greg Stevenson}
\address{Dave Benson \\ 
Institute of Mathematics\\ 
University of Aberdeen\\ 
King's College\\ 
Aberdeen AB24 3UE\\ 
Scotland U.K.}
\address{Srikanth B. Iyengar\\ 
Department of Mathematics\\
University of Nebraska\\ 
Lincoln, NE 68588\\ 
U.S.A.}
\address{Henning Krause\\ 
Fakult\"at f\"ur Mathematik\\ 
Universit\"at Bielefeld\\ 
33501 Bielefeld\\ 
Germany.}
\begin{document}

\begin{abstract} We develop a suitable version of the stable module
category of a finite group $G$ over an arbitrary commutative ring $k$.
The purpose of the construction is to produce a compactly generated
triangulated category whose compact objects are the finitely presented
$kG$-modules.  The main idea is to form a localisation of the usual
version of the stable module category with respect to the filtered
colimits of weakly injective modules. There is also an analogous
version of the homotopy category of weakly injective $kG$-modules and
a recollement relating the stable category, the homotopy category, and
the derived category of $kG$-modules.
\end{abstract}


\thanks{Version from 5th August 2012.\\ The authors are grateful
to the Mathematische Forschungsinstitut at Oberwolfach for support
through a ``Research in Pairs'' visit. The research of the second
author was partly supported by NSF grant DMS 1201889.}

\maketitle \setcounter{tocdepth}{1}
\tableofcontents

\section{Introduction}

The purpose of this paper is to develop a version of the stable module
category of a finite group $G$ over an arbitrary commutative ring
$k$. We start with the category $\Mod (kG)$ whose objects are all
$kG$-modules and whose morphisms are the module homomorphisms. If $k$
were a field, the stable module category $\StMod (kG)$ would be formed
by taking the same objects, but with morphisms
\[ 
\sHom_{kG}(M,N)=\Hom_{kG}(M,N)/\PHom_{kG}(M,N)
\] 
where $\PHom_{kG}(M,N)$ is the linear subspace consisting of those
morphisms that factor through some projective $kG$-module. This is
then a triangulated category that is compactly generated, with compact
objects the finitely generated $kG$-modules;
see~\cite{Happel:1988a}. The crucial property that makes this work is
that the group algebra $kG$ is self injective, so projectives modules are the same as
the injective modules.

Over an arbitrary commutative ring $k$ however, projective modules are
no longer injective, so one needs to adjust this
construction. Instead, we use the fact that weakly projective modules
are weakly injective, where ``weakly'' means ``relative to the trivial
subgroup''. Thus we define $\PHom_{kG}(M,N)$ to be the morphisms that
factor through a weakly projective module, and construct $\StMod (kG)$
as above. This is again a triangulated category, and the compact
objects in it are the finitely presented $kG$-modules; we write
$\stmod (kG)$ for the full subcategory of modules isomorphic in
$\StMod (kG)$ to a finitely presented module.

This is not quite good enough to make a decent stable module category,
as there are several problems with this construction:
\begin{enumerate}
\item[--] The category $\StMod (kG)$ is usually not compactly generated by
$\stmod (kG)$.
\item[--] The functor $k_{(|G|)}\otimes_k
-\,\colon\StMod (kG)\to\StMod(k_{(|G|)}G)$ is not an equivalence of
categories, where $k_{(|G|)}$ denotes $k$ localised by inverting all
integers coprime to $|G|$.
\end{enumerate} The reason for both of these is that filtered colimits
of weakly projective modules are not necessarily weakly projective;
see Example~\ref{ex:pz}.

To remedy these deficiencies, we form the Verdier quotient of
$\StMod (kG)$ with respect to the localising subcategory of
\emph{cw-injectives}, namely filtered colimits of weakly injective
modules. The principal features of this quotient category, which we
denote $\StMod^\cw (kG)$, are collected in the result below.

\begin{theorem}
\label{th:StMod-cw} The following statements hold:
\begin{enumerate}
\item The category $\StMod^\cw (kG)$ is a compactly generated triangulated category.
\item The natural functor $\stmod (kG)\to \StMod^\cw (kG)$ identifies
  $\stmod (kG)$ with the full subcategory consisting of the compact
  objects.
\item The functor $k_{(|G|)}\otimes
-\colon\StMod^\cw (kG)\to\StMod^\cw(k_{(|G|)}G)$ is an equivalence of
triangulated categories.
\item An object in $\StMod^\cw (kG)$ is zero if and only if its image
in $\StMod^\cw(k_{(p)}G)$ is zero for each prime $p$ dividing $|G|$.
\end{enumerate}
\end{theorem}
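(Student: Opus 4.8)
The four parts clearly build on one another, so the natural strategy is to prove (1) first, then (2), then (3), and finally use (3) to deduce (4).

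For part (1), the plan is to realise $\StMod^{\cw}(kG)$ as a Verdier quotient of the compactly generated category $\StMod(kG)$ by a localising subcategory. By Neeman's localisation theory, it suffices to show that this localising subcategory — the cw-injectives, i.e.\ the localising subcategory generated by the filtered colimits of weakly injective modules — is generated by a \emph{set} of compact objects of $\StMod(kG)$. The key observation should be that a weakly injective module is, up to the relations defining the stable category, a filtered colimit of finitely presented weakly injective modules, and there are only a set of isomorphism classes of the latter; hence the cw-injectives coincide with the localising subcategory $\Loc(\mcS)$ generated by the set $\mcS$ of finitely presented weakly injective $kG$-modules. Then $\StMod^{\cw}(kG)=\StMod(kG)/\Loc(\mcS)$ is compactly generated, with compact objects (the thick closure of) the images of the compact objects of $\StMod(kG)$, namely $\stmod(kG)$.

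Part (2) is then a matter of Neeman--Thomason: the compact objects of the quotient $\StMod(kG)/\Loc(\mcS)$ are, up to direct summands, the images of the compact objects $\stmod(kG)$; one must check that $\stmod(kG)$ is already idempotent-complete and that the functor $\stmod(kG)\to\StMod^{\cw}(kG)$ is fully faithful on the nose. The latter needs the identification of morphisms in the quotient as a colimit over the relevant filtered system, together with the fact that a map out of a finitely presented module that factors through a cw-injective already factors through a finitely presented weakly injective one — essentially a finiteness/approximation argument.

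For part (3), the plan is to compare the two quotients. The functor $k_{(|G|)}\otimes_k-$ sends weakly injective (hence cw-injective) modules to cw-injectives, so it descends to $\StMod^{\cw}(kG)\to\StMod^{\cw}(k_{(|G|)}G)$. On compact objects it is the functor $\stmod(kG)\to\stmod(k_{(|G|)}G)$; I would show this is an equivalence after passing to the stable-cw categories because inverting integers coprime to $|G|$ does not change $\stmod(kG)$ up to the cw-relations — any $kG$-module killed by such an integer becomes weakly projective, hence cw-injective — so the kernel and cokernel of $M\to k_{(|G|)}\otimes_k M$ are cw-injective. Since both categories are compactly generated and the functor is exact, preserves coproducts, and restricts to an equivalence on compact objects, it is an equivalence. (This is where part~(1) is genuinely used: without compact generation one cannot bootstrap from compacts to the whole category.)

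Finally, part (4). Let $X\in\StMod^{\cw}(kG)$ with image zero in $\StMod^{\cw}(k_{(p)}G)$ for every prime $p\mid|G|$. By part~(3) we may replace $k$ by $k_{(|G|)}$, so $|G|$ becomes a product of primes each of which is either invertible or a non-unit; localising further, $k_{(|G|)}=\prod$ behaviour is controlled by the finitely many primes $p\mid|G|$. The plan is a standard arithmetic-square / Mayer--Vietoris argument: cover $\Spec k_{(|G|)}$ by the localisations at the primes dividing $|G|$ together with the locus where $|G|$ is invertible (over which $\StMod^{\cw}$ vanishes, since there $kG$ is itself weakly projective), and use that $X$ vanishes after each of these base changes to conclude $X=0$. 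Concretely, since $X$ is compactly generated from $\stmod(kG)$, it is enough to show $\sHom^{\cw}(C,X)=0$ for all $C\in\stmod(kG)$, and this group can be recovered from its $p$-local pieces by a Mayer--Vietoris sequence built from the arithmetic square for $k_{(|G|)}$.

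\medskip

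\textbf{Main obstacle.} The crux is part~(1): proving that the cw-injectives form a localising subcategory generated by a \emph{set} of compact objects — equivalently, that every cw-injective is built, inside $\StMod(kG)$, from finitely presented weakly injective modules. This requires a careful analysis of filtered colimits of weakly injectives modulo maps factoring through weakly projectives, and it is what makes the whole Bousfield-localisation machinery applicable. Everything downstream (parts~2--4) is then a combination of Neeman--Thomason localisation theory and a by-now-routine arithmetic-fracture argument, the latter being the only subtlety in part~(4).
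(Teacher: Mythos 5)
The crux of your plan for (1) is wrong as stated, and it is precisely the step you flag as the main obstacle. You propose to realise the cw-injectives as $\Loc(\mcS)$ for $\mcS$ the set of finitely presented weakly injective modules, and then quotient by this "compactly generated kernel". But finitely presented weakly injective modules are exactly the objects killed by passing to $\StMod (kG)$ (Higman's criterion: $\id$ factors through a weakly injective iff the module is weakly injective), so in $\StMod (kG)$ your set $\mcS$ consists of zero objects and $\Loc(\mcS)=0$, whereas the cw-injectives are typically nonzero there (e.g.\ $\varGamma_n\bbZ$ in Example~\ref{ex:pz}). Moreover your framework presupposes that $\StMod (kG)$ is compactly generated so that Neeman--Thomason localisation applies; the paper explicitly warns that $\StMod (kG)$ is usually \emph{not} compactly generated by $\stmod (kG)$, and no other generation statement is available. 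The paper's actual mechanism is different: the cw-injectives are characterised by orthogonality, not generation --- they are exactly $(\stmod kG)^\perp$ (Lemmas~\ref{le:cw-perp} and \ref{le:perp}, resting on the purity criteria of Theorem~\ref{th:cw-injective}) --- and then Proposition~\ref{pr:compact} (Neeman's Theorem~9.1.16) says that for any triangulated $\sfT$ with coproducts and any set $\sfC$ of compact objects, the composite $\Loc(\sfC)\to\sfT\to\sfT/\sfC^\perp$ is an equivalence; hence $\StMod^\cw (kG)\simeq\Loc(\stmod kG)$ is compactly generated with compacts $\Thick(\stmod kG)=\stmod (kG)$, with no need for the kernel $\sfV$ to be generated by compacts at all. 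This yields (1) and (2) simultaneously and makes your separate full-faithfulness/idempotent-completion worries in (2) unnecessary.

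Parts (3) and (4) of your sketch are not fundamentally broken but are incomplete and take a different route from the paper. For (3) the paper does not bootstrap from compacts; it exhibits restriction as an explicit quasi-inverse, using the $k$-split sequence of Definition~\ref{defn:pm} to get a triangle $M\to M_{(n)}\to\varGamma_n M$ in $\StMod (kG)$ and Theorem~\ref{th:Y} to see $\varGamma_n M$ is cw-injective; your "kernel and cokernel are cw-injective" argument does not directly produce a triangle in $\StMod (kG)$ (the four-term sequence need not be $k$-split), and your compact-level equivalence is asserted rather than proved. For (4) the paper avoids any arithmetic fracture square: it uses the purity-based local-global principle (Proposition~\ref{pr:pure-cover}, via closure of cw-injectives under products and pure submodules, Corollary~\ref{co:cw-injective}), applied to the family $k_{(n)}\to k_{(p)}$; your Mayer--Vietoris argument would additionally require unproved compatibility of $\Hom$-groups in the quotient category with these base changes. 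So even granting (3)--(4) could be repaired, the proposal as written has a genuine gap at its foundation in (1).
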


These results are proved in Sections~\ref{se:stable_category} and
\ref{se:base_change}, building on properties of weakly injective
modules and their filtered colimits, presented in the preceding
sections.

The equivalence in (3) above is deduced from general results on
$\StMod^\cw (kG)$ in Section~\ref{se:base_change}, concerning base
change along a homomorphism $k\to k'$ of commutative rings, and
leading to the following local-global principle:

\begin{theorem} A $kG$-module $M$ is cw-injective if and only if
$M_\fp$ is a cw-injective $k_\fp G$-module for every maximal ideal
$\fp\subseteq k$.
\end{theorem}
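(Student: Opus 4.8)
The plan is to separate the two implications. The forward one is routine; the converse --- that local cw-injectivity forces global cw-injectivity --- is the substantial half, and I would base it on the results of Section~\ref{se:base_change}. For the forward direction, note that $k_\fp\otimes_k-$ is exact, commutes with filtered colimits, and carries weakly injective $kG$-modules to weakly injective $k_\fp G$-modules: one has $\Hom_k(kG,V)_\fp\cong\Hom_{k_\fp}(k_\fp G,V_\fp)$ because $kG$ is free of finite rank over $k$, and direct summands are preserved under base change. So if $M=\colim_i M_i$ with each $M_i$ weakly injective, then $M_\fp=\colim_i(M_i)_\fp$ presents $M_\fp$ as a filtered colimit of weakly injective $k_\fp G$-modules.

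For the converse, suppose $M_\fp$ is cw-injective for every maximal ideal $\fp\subseteq k$. I would use that $M$ is cw-injective precisely when its image in $\StMod^\cw(kG)$ vanishes, and that --- this category being compactly generated with compact objects $\stmod(kG)$ by Theorem~\ref{th:StMod-cw} --- this happens if and only if
\[
\Hom_{\StMod^\cw(kG)}(C,\Si^nM)=0\qquad\text{for all finitely presented }C\text{ and all }n\in\bbZ.
\]
Each of these groups is a $k$-module, and a $k$-module vanishing after localisation at every maximal ideal is zero; so it suffices to produce a natural isomorphism
\[
\Hom_{\StMod^\cw(kG)}(C,\Si^nM)_\fp\ \cong\ \Hom_{\StMod^\cw(k_\fp G)}(C_\fp,\Si^nM_\fp)
\]
for finitely presented $C$, since its right-hand side vanishes by hypothesis.

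Establishing this last isomorphism is the crux, and the step I expect to be the main obstacle. It should be an instance of the base-change machinery of Section~\ref{se:base_change}: the functor $k_\fp\otimes_k-$ induces an exact, coproduct-preserving functor $\StMod^\cw(kG)\to\StMod^\cw(k_\fp G)$ preserving compactness (as $C\mapsto C_\fp$ preserves finite presentation) and compatible with the Verdier quotient, since the subcategory of cw-injectives is stable under $k_\fp\otimes_k-$ by the forward direction. On Hom-groups one would begin with the classical isomorphism $\Hom_{kG}(C,X)\otimes_kk_\fp\cong\Hom_{k_\fp G}(C_\fp,X_\fp)$, valid because $C$ is finitely presented and $k_\fp$ is flat over $k$, then carry it through $\StMod(kG)$ --- checking that $\PHom$ base-changes correctly, again via finite presentation of $C$ and flatness --- and finally through the quotient, writing $\Hom_{\StMod^\cw(kG)}(C,\Si^nM)$ as the filtered colimit of the $\Hom_{\StMod(kG)}(C',\Si^nM)$ over maps $C'\to C$ with cw-injective cone and using compactness of $C$ together with flatness to commute the localisation past this colimit. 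This bookkeeping with fractions is the delicate point; the rest is formal.

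Finally, two remarks on scope. The hypothesis on $M_\fp$ is automatically satisfied at every maximal ideal $\fp$ of residue characteristic not dividing $|G|$, since $|G|$ is then a unit in $k_\fp$ and every $k_\fp G$-module is weakly injective; so only the maximal ideals lying over the finitely many primes dividing $|G|$ are genuinely at issue. And one could imagine bypassing $\StMod^\cw(kG)$ by assembling a single filtered colimit of weakly injective modules converging to $M$ out of the local data, but there is no evident way to glue such local presentations, so the quotient-category route --- which localises the whole question at once through the Hom-detection above --- seems the natural one.
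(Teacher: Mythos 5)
Your reduction of the converse to the vanishing of the $k$-modules $\Hom_{\StMod^\cw (kG)}(C,\Si^nM)$ for finitely presented $C$ is correct, but the whole argument then rests on the base-change isomorphism $\Hom_{\StMod^\cw (kG)}(C,\Si^nM)_\fp\cong\Hom_{\StMod^\cw(k_\fp G)}(C_\fp,\Si^nM_\fp)$, and the route you sketch for it does not go through as stated. In the calculus-of-fractions description of maps in the Verdier quotient, the sources $C'$ of the maps $C'\to C$ with cw-injective cone are arbitrary $kG$-modules, not finitely presented ones, so the classical isomorphism $\Hom_{kG}(C',X)_\fp\cong\Hom_{k_\fp G}(C'_\fp,X_\fp)$ -- which requires finite presentation of the source over the flat base change -- cannot be applied termwise to that colimit; and even granting termwise comparison maps, you would still need a cofinality argument matching the fraction category over $kG$ with the one over $k_\fp G$ (every fraction over $k_\fp G$ dominated by one obtained by base change), which nothing in Section~\ref{se:base_change} supplies. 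So the step you yourself flag as the crux is a genuine gap. It can be closed, but by a different mechanism: by Theorem~\ref{th:stmodcw} identify $\StMod^\cw (kG)$ with $\Loc(\stmod kG)\subseteq\StMod (kG)$ and use the colocalisation triangle $\gam M\to M\to LM$ with $\gam M\in\Loc(\stmod kG)$ and $LM$ cw-injective; base change preserves both pieces, so the desired isomorphism reduces to $\sHom_{kG}(C,N)_\fp\cong\sHom_{k_\fp G}(C_\fp,N_\fp)$ for $C$ finitely presented and $N$ arbitrary, which follows from flatness of $k_\fp$ together with the description of $\PHom_{kG}(C,N)$ as the image of $\Hom_{kG}(C{\ua^G},N)\to\Hom_{kG}(C,N)$ from Lemma~\ref{le:stable0}. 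None of this is in your proposal, so as written the proof is incomplete.

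You should also be aware that the paper proves the converse by a much shorter, purely module-theoretic argument that never passes through $\StMod^\cw (kG)$: each $M_\fp$ is cw-injective as a $kG$-module by restriction (Lemma~\ref{le:base-change-exact}); the diagonal map $M\to\prod_\fp M_\fp$ is a pure monomorphism of $kG$-modules, because $X\to\prod_\fp X_\fp$ is a monomorphism for every $k$-module $X$ and one applies this to $N\otimes_{kG}M$; and cw-injectives are closed under products and under pure submodules (Corollary~\ref{co:cw-injective}), so $M$ is cw-injective (Proposition~\ref{pr:pure-cover}). Your forward direction agrees with the paper's. The comparison is instructive: the paper's closure properties make the local-to-global step essentially formal, whereas your Hom-detection strategy, while workable, forces one to prove a nontrivial compatibility of localisation with the Verdier quotient that is avoidable here.
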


In Section~\ref{se:htpy_category} we develop a version of the homotopy
category of weakly injective $kG$-modules, namely, we introduce a
Verdier quotient $\sfK_\pur(\WInj kG)$ of the homotopy category of
weakly injective $kG$-modules that has the following properties.

\begin{theorem} 
  The category $\sfK_\pur(\WInj kG)$ is a compactly generated
  triangulated category and the compact objects form a full
  triangulated category that is equivalent to the bounded derived
  category $\sfD^b(\mod kG)$ of finitely presented
  $kG$-modules. Moreover, there is a recollement
  \[ \xymatrix{ \StMod^\cw (kG)\,\ar[rr]&&\,\sfK_\pur(\WInj kG)\,
    \ar[rr]\ar@<1.25ex>[ll]\ar@<-1.25ex>[ll]&& \,\sfD_\pur(\Mod
    kG)\ar@<1.25ex>[ll]\ar@<-1.25ex>[ll] }
\] where $\sfD_\pur(\Mod kG)$ denotes the pure derived category of $kG$-modules.
\end{theorem}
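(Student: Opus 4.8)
The plan is to transpose Krause's construction of the recollement $\sfK_{\ac}(\Inj R)\to\sfK(\Inj R)\to\sfD(\Mod R)$ attached to a noetherian ring $R$ to the present relative situation: injective modules are replaced by weakly injective $kG$-modules, the pure derived category $\sfD_\pur(\Mod kG)$ takes over the role of the derived category, and the left-hand term is $\StMod^\cw(kG)$ rather than $\StMod(kG)$ because filtered colimits of weakly injectives must also be inverted. I write $\sfK_\pur(\WInj kG)=\sfK(\WInj kG)/\mathsf N$ with $\mathsf N$ the localising subcategory used in its construction, and $Q\colon\sfK_\pur(\WInj kG)\to\sfD_\pur(\Mod kG)$ for the functor induced by $\sfK(\WInj kG)\hookrightarrow\sfK(\Mod kG)\to\sfD_\pur(\Mod kG)$, which kills $\mathsf N$; from the earlier part of this section I take the identification $\ker Q\simeq\StMod^\cw(kG)$, which gives the left-hand inclusion in the diagram, and I use Theorem~\ref{th:StMod-cw} and the results on weakly injective modules and cw-injectives from the preceding sections.

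\emph{Compact generation and the compact objects.} For a finitely presented $kG$-module $M$, choose a weakly injective resolution $M\to I_M$ — these exist, are functorial up to homotopy, and may be chosen with finitely presented terms — and let $\iota M$ be the class of $I_M$ in $\sfK_\pur(\WInj kG)$; extending componentwise and totalising, this produces a triangulated functor $\iota\colon\sfD^b(\mod kG)\to\sfK_\pur(\WInj kG)$, where one invokes that short exact sequences of finitely presented modules become triangles in the quotient. I would then prove three things. First, $\iota$ is fully faithful, with $\Hom_{\sfK_\pur(\WInj kG)}(\iota X,\Sigma^n\iota Y)\cong\Hom_{\sfD^b(\mod kG)}(X,\Sigma^nY)$ for all $X,Y$ and $n\in\bbZ$. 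Second, every $\iota M$ with $M$ finitely presented is compact: coproducts in $\sfK_\pur(\WInj kG)$ are formed by taking componentwise coproducts of complexes of weakly injectives (these being closed under coproducts) and passing to the quotient, and $\Hom(\iota M,-)$ commutes with such coproducts since $M$ is finitely presented and $\mathsf N$ is closed under coproducts. Third, $\{\iota M\}_M$ generates: an object right-orthogonal to all $\Sigma^n\iota M$ has trivial (pure) cohomology, hence is zero in $\sfK_\pur(\WInj kG)$. Granting these, $\sfK_\pur(\WInj kG)$ is compactly generated and its subcategory of compact objects is the thick subcategory generated by $\iota(\mod kG)$, which by full faithfulness is the thick closure of $\mod kG$ in its bounded derived category, i.e.\ $\sfD^b(\mod kG)$.

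\emph{The recollement.} By Theorem~\ref{th:StMod-cw}(1),(2) the category $\StMod^\cw(kG)$ is compactly generated with compact objects $\stmod(kG)$; the embedding $\StMod^\cw(kG)=\ker Q\hookrightarrow\sfK_\pur(\WInj kG)$ preserves coproducts, so $\StMod^\cw(kG)$ is a localising subcategory, and the objects of $\stmod(kG)$ — being represented by bounded complexes of finitely presented modules — are compact in $\sfK_\pur(\WInj kG)$. Thus $\StMod^\cw(kG)$ is the localising subcategory of the compactly generated category $\sfK_\pur(\WInj kG)$ generated by a set of its compact objects, and Neeman's localisation theorem yields a recollement $\StMod^\cw(kG)\to\sfK_\pur(\WInj kG)\to\sfK_\pur(\WInj kG)/\StMod^\cw(kG)$ with all six functors. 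It remains to identify the Verdier quotient with $\sfD_\pur(\Mod kG)$: $Q$ preserves coproducts, hence has a right adjoint by Brown representability, and one checks this right adjoint is fully faithful — equivalently that $Q$ is a localisation functor with kernel exactly $\StMod^\cw(kG)$ — which gives $\sfK_\pur(\WInj kG)/\StMod^\cw(kG)\xrightarrow{\ \sim\ }\sfD_\pur(\Mod kG)$ and completes the recollement.

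\emph{The main obstacle.} The technical heart is the full faithfulness of $\iota$, i.e.\ the exact control of morphism groups in the Verdier quotient $\sfK_\pur(\WInj kG)$. Inside $\sfK(\WInj kG)$ one only has $\Hom_{\sfK(\WInj kG)}(\iota M,\Sigma^n\iota N)\cong\Ext^n_{(kG,k)}(M,N)$, the extension group relative to the subring $k$, because weakly injective resolutions are resolutions for the $k$-split exact structure; so the crux is to show that passing to the quotient by $\mathsf N$ converts relative $\Ext_{(kG,k)}$ into absolute $\Ext_{kG}$ between finitely presented modules. This is the analogue of Krause's computation $\Hom_{\sfK(\Inj R)}(iX,iY)\cong\Hom_{\sfD(\Mod R)}(X,Y)$ for $X$ with finitely generated cohomology, and it is precisely where the definition of $\mathsf N$ and the pure-exact structure on $\Mod kG$ (finitely presented $=$ pure-projective) do their work. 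A secondary difficulty, settled in the earlier part of this section, is the identification $\ker Q\simeq\StMod^\cw(kG)$, which rests on the concrete model of $\StMod^\cw(kG)$ in terms of complexes of weakly injective modules.
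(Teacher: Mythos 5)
Your overall skeleton (weakly injective resolutions $\bsi M$ of finitely presented modules as compact generators, and $Q$ as a quotient functor whose kernel is $\StMod^\cw(kG)$) is the right one, but two of your key steps do not work as stated. First, the identification of the compact objects: in this paper $\sfD^b(\mod kG)$ is by definition the derived category of $\mod (kG)$ with respect to the \emph{$k$-split} exact structure, so its Hom groups are the relative groups $\Ext^{*}_{G,1}$, not the absolute $\Ext^{*}_{kG}$. The step you single out as the technical heart --- that passing to the quotient by $\mathsf N=\sfK_\pac(\WInj kG)$ converts relative Ext into absolute Ext --- is not only unnecessary, it is false: the $\bsi M$ are already compact in $\sfK(\WInj kG)$ with $\{\bsi M\mid M\in\mod (kG)\}^\perp=\sfK_\pac(\WInj kG)$, so by Proposition~\ref{pr:compact} the quotient restricts to an equivalence $\Loc(\{\bsi M\})\xra{\ \sim\ }\sfK_\pur(\WInj kG)$ and Hom groups out of these objects are \emph{unchanged} by the quotient; hence $\Hom_{\sfK_\pur(\WInj kG)}(\bsi M,\bsi N[n])\cong\Ext^n_{G,1}(M,N)$. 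Already for $G$ trivial one has $\sfK_\pur(\WInj k)=\sfD_\pur(\Mod k)$ with compacts $\sfK^b(\mod k)$, where $\Hom(M,N[1])=0$ although $\Ext^1_k(M,N)$ need not vanish. What the paper proves (Proposition~\ref{pr:KInj}, following Krause) is the single isomorphism $\Hom_{\sfK(\Mod kG)}(\bsi M,X)\cong\Hom_{\sfK(\Mod kG)}(M,X)$ for $X\in\sfK(\WInj kG)$; this gives compactness, identifies the perpendicular with the pure acyclics, and then Proposition~\ref{pr:compact} yields compact generation with compacts $\Thick(\{\bsi M\})\simeq\sfK^{+,b}(\winj kG)\simeq\sfD^b(\mod kG)$ in the $k$-split sense.

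Second, the recollement. Your construction rests on the claim that the objects of $\stmod(kG)$ are compact in $\sfK_\pur(\WInj kG)$ because they are ``represented by bounded complexes of finitely presented modules''; but under $\ker Q\simeq\StMod^\cw(kG)$ a finitely presented module is represented by a doubly infinite complete resolution (a $k$-split acyclic complex of weakly injectives with the module as degree-zero cycles, cf.\ Lemma~\ref{le:StMod}), and such objects are not compact in the ambient category: already over a field the middle term is $\sfK(\Inj kG)$, whose compacts are injective resolutions of objects of $\sfD^b(\mod kG)$, the only acyclic ones among these being zero, so Tate resolutions of nonprojective modules are noncompact. Moreover, even if the kernel were generated by ambient compacts, Neeman's theorem yields a smashing localisation (the inclusion and the quotient with their \emph{right} adjoints), not the two left adjoints a recollement requires; those need the kernel to be closed under products, which is not automatic (in the derived category of $\bbZ$, $\Loc(\bbZ/p)$ is generated by a compact object but its inclusion admits no left adjoint). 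The paper's route avoids both issues: with $\sfD=\sfK^b(\winj kG)\subseteq\sfC=\sfK^{+,b}(\winj kG)$ one has $\sfC^\perp=\sfK_\pac(\WInj kG)$ and $\sfD^\perp=\sfK_{\kpac}(\WInj kG)$ in $\sfK(\WInj kG)$; being perpendicular categories of sets of compact objects, these are closed under products and coproducts, so the induced inclusion and quotient preserve products and coproducts, and Brown representability and its dual supply all four outer adjoints (Proposition~\ref{pr:recoll}). The kernel is then identified with $\StMod^\cw(kG)$ by taking degree-zero cycles (Lemma~\ref{le:StMod}) --- a point you assume rather than prove --- and the quotient with $\sfD_\pur(\Mod kG)$ by Lemma~\ref{le:derived-inj}, which is the one part of your plan that goes through as written.
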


When $k$ is a field, the thick tensor ideals of $\stmod (kG)$, and the
localising tensor ideals of $\StMod (kG)$, have been classified, in
terms of appropriate subsets of the Zariski spectrum of the cohomology
ring $H^*(G,k)$; see
\cite{Benson/Carlson/Rickard:1997a,Benson/Iyengar/Krause:2011b}. One
of our motivations for undertaking the research reported here is to
investigate possible extensions of these results to a general
commutative ring $k$. The tensor product of $kG$-modules again induces
a tensor triangulated structure on the stable module
category. However, in Section~\ref{se:thick} we produce an infinite
chain of thick tensor ideals of $\stmod (\bbZ G)$ for any group $G$
with at least two elements, and explain why this implies that the
classification in terms of cohomological support
\cite{Benson/Carlson/Rickard:1997a} does not carry over verbatim to
this context.

The same example shows that the spectrum of the tensor triangulated
category $\stmod (\bbZ G)$ in the sense of Balmer~\cite{BaSpec} is
disconnected. Again, this is in contrast to the case when the ring of
coefficients is a field. A detailed explanation of this phenomenon can
be found in Appendix~\ref{ap:appendix}.

\section{Weakly projective and weakly injective modules}
\label{se:weak things}

In this section we recall basic results on relative projectives and
relative injectives for group algebras over commutative rings,
obtained by Gasch\"utz~\cite{Gaschuetz:1952a} and
D. G. Higman~\cite{Higman:1954a}, building on the work of Maschke and
Schur. See also \cite[\S3.6]{Benson:1991a}. The novelty, if any, is a
systematic use of Quillen's~\cite{Quillen:1973a} notion of exact
categories.

Throughout this work $G$ is a finite group, $k$ a commutative
ring of coefficients and $kG$ the group algebra. The
category of $kG$-modules is denoted $\Mod (kG)$, and $\mod (kG)$ is its
full subcategory consisting of all finitely presented modules.

\subsection*{Restriction to the trivial subgroup} 

The functor $\Mod (kG)\to \Mod (k)$ that restricts a $kG$-module to the
trivial subgroup has a left adjoint $kG\otimes_k -$ and a right
adjoint $\Hom_k(kG,-)$. These functors are isomorphic, since $G$ is
finite; we identify them, and write $M\ua^G$ for $kG\otimes_k M$; the
$G$ action is given by $h(g\otimes m)=hg \otimes m$. Adjunction
yields, for each $kG$-module $M$, natural morphisms of $kG$-modules
\begin{alignat*}{2}
 &\iota_M\colon M\lto M{\ua^G}&
\quad\text{and}\quad &\pi_M\colon M{\ua^G}\lto M\quad{\text{where}}\\
&\iota_M(m)= \sum_{g\in G}g\otimes g^{-1}m &\quad\text{and}\quad
&\pi_M(g\otimes m)= gm \,.
\end{alignat*} Note that $\iota_M$ is a monomorphism whilst $\pi_M$ is an
epimorphism.

\subsection*{Weakly projective and weakly injective modules}

An \emph{exact category} in the sense of Quillen \cite{Quillen:1973a}
(see also Appendix~A of Keller \cite{Keller:1990a}) is an additive
category with a class of short exact sequences satisfying certain
axioms. For example, every additive category is an exact category with
respect to the \emph{split exact structure}.

We give $\Mod (kG)$ the structure of an exact category with respect to
the $k$-split short exact sequences. Said otherwise, we consider for
$\Mod (k)$ the split exact structure, and declare that a sequence of
$kG$-modules is \emph{$k$-split exact} if it is split exact when
restricted to the trivial subgroup.

For any $kG$-module $M$, the following sequences of $kG$-module
are $k$-split exact:
\begin{gather*} 0\lto M \xra{\iota_M} M\ua^G \lto\Coker\iota_M \lto
0\\ 0\lto \Ker \pi_M \lto M\ua^G \xra{\pi_M} M\lto 0\,.
\end{gather*} Indeed, each element of $M\ua^{G}$ can be written
uniquely as $\sum_{g\in G}g\otimes_{k}m_{g}$. The map assigning such
an element to $m_{1}$, where $1$ is the identity of $G$, is a
$k$-splitting of $\iota_{M}$, whilst the map $M\to M\ua^{G}$ assigning
$m$ to $1\otimes_{k}m$ is a $k$-splitting of $\pi_{M}$.

\begin{definition} A $kG$-module $P$ is said to be \emph{weakly
projective} if $\Hom_{kG}(P,-)$ preserves exactness of $k$-split exact
sequences; equivalently, given a $k$-split exact sequence $0\to L \to
M\to N\to 0$ of $kG$-modules, any morphism $P\to N$ lifts to $M$.

In the same vein, a $kG$-module $P$ is \emph{weakly injective} if
$\Hom_{kG}(-,P)$ preserves exactness of $k$-split exact sequences;
equivalently, given a $k$-split exact sequence $0\to L \to M\to N\to
0$ of $kG$-modules, any morphism $L\to P$ extends to $M$.
\end{definition}

\subsection*{The trace map} 

We recall a characterisation of weakly projective and injective
$kG$-modules in terms of the trace map.

\begin{definition} If $M$ and $N$ are $kG$-modules, the \emph{trace
map}
\[ \Tr_G\colon\Hom_k(M,N)\to\Hom_{kG}(M,N)
\] is defined by
\[ \Tr_G(\theta)(m)=\sum_{g\in G}g(\theta(g^{-1}m)) \quad\text{for
$m\in M$.}
\]
\end{definition}

A straightforward calculation justifies the following properties of
the trace map.

\begin{lemma}
\label{le:tr-ideal} Fix a homomorphism $\theta\in\Hom_k(M,N)$.
\begin{enumerate}
\item If $\alpha\colon M'\to M$ is a morphism of $kG$-modules, then
$\Tr_G(\theta\alpha)=\Tr_G(\theta)\alpha$.
\item If $\beta\colon N \to N'$ is a morphism of $kG$-modules, then
$\Tr_G(\beta\theta)=\beta\Tr_G(\theta)$.\qed
\end{enumerate}
\end{lemma}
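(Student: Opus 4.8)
The plan is to prove Lemma~\ref{le:tr-ideal} by unwinding the definition of the trace map and using the $kG$-linearity of $\alpha$ and $\beta$ together with the $k$-linearity of $\theta$. This is the ``straightforward calculation'' alluded to in the text, so the work is entirely formal; the only thing to be careful about is keeping track of which maps commute with the $G$-action and which do not.

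For part (1), fix $m'\in M'$ and compute directly from the definition:
\[
\Tr_G(\theta\alpha)(m') = \sum_{g\in G} g\bigl(\theta\alpha(g^{-1}m')\bigr) = \sum_{g\in G} g\bigl(\theta(g^{-1}\alpha(m'))\bigr),
\]
where the second equality uses that $\alpha$ is $kG$-linear, so $\alpha(g^{-1}m') = g^{-1}\alpha(m')$. The right-hand side is by definition $\Tr_G(\theta)(\alpha(m')) = \bigl(\Tr_G(\theta)\alpha\bigr)(m')$, which gives the claim. Here $\theta$ need only be $k$-linear; no hypothesis on $\theta$ beyond that is used.

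For part (2), fix $m\in M$ and compute
\[
\Tr_G(\beta\theta)(m) = \sum_{g\in G} g\bigl(\beta\theta(g^{-1}m)\bigr) = \sum_{g\in G} \beta\bigl(g\,\theta(g^{-1}m)\bigr) = \beta\Bigl(\sum_{g\in G} g\,\theta(g^{-1}m)\Bigr) = \beta\bigl(\Tr_G(\theta)(m)\bigr),
\]
where the second equality uses that $\beta$ is $kG$-linear (hence commutes with the action of $g\in G$) and the third uses that $\beta$ is additive. This is $\bigl(\beta\Tr_G(\theta)\bigr)(m)$, as required.

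There is no real obstacle here: the statement is a bookkeeping exercise, and the only point that needs any attention is to invoke $kG$-linearity of $\alpha$ in part (1) and of $\beta$ in part (2) at exactly the right spot, while remembering that $\theta$ itself is merely a $k$-homomorphism so it cannot be pulled through the $G$-action. One could also phrase the whole lemma more conceptually by noting that $\Tr_G$ is a morphism of $\Hom_k(M,N)$-$\Hom_k(M,N)$-bimodules in the appropriate sense, but the elementwise computation above is the cleanest route and matches the level of the surrounding material.
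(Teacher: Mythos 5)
Your computation is correct and is exactly the ``straightforward calculation'' the paper leaves implicit: unwind the definition of $\Tr_G$ and use $kG$-linearity of $\alpha$ (respectively $\beta$) to move it past the $G$-action, with $\theta$ only required to be $k$-linear. Nothing further is needed.
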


For any $kG$-module $M$, let $\theta_M\in\End_k(M{\ua^G})$ be the map
given defined by
\[ \theta_M(g\otimes m)=
\begin{cases} 1\otimes m & g=1 \\ 0 & g\ne 1.
\end{cases}
  \]

\begin{lemma}
\label{le:tr-ind} The map $\Tr_G(\theta_M)$ is the identity morphism on
$M{\ua^G}$.\qed
\end{lemma}

\begin{theorem}
\label{th:trace} Let $\rho\colon M \to N$ be a morphism of
$kG$-modules. The following conditions are equivalent:
\begin{enumerate}
\item In $\Mod (kG)$, the map $\rho$ factors through the surjection
$\pi_N\colon N{\ua^G}\to N$.
\item In $\Mod (kG)$, the map $\rho$ factors through the injection
$\iota_M\colon M \to M{\ua^G}$.
\item There exists $\theta\in\Hom_k(M,N)$ such that
$\Tr_G(\theta)=\rho$.
\end{enumerate}
\end{theorem}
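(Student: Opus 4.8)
The plan is to prove the cyclic chain of implications $(1)\Rightarrow(2)\Rightarrow(3)\Rightarrow(1)$, using the splittings of $\iota$ and $\pi$ recorded above together with the two lemmas on the trace map.

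First I would prove $(3)\Rightarrow(1)$ and $(3)\Rightarrow(2)$ simultaneously, since these are the cleanest. Suppose $\rho=\Tr_G(\theta)$ for some $\theta\in\Hom_k(M,N)$. Let $s\colon M\ua^G\to M$ be the $k$-linear splitting of $\iota_M$ given by $\sum_g g\otimes m_g\mapsto m_1$; then a direct computation shows $\rho=\Tr_G(\theta)=\Tr_G(\theta s)\circ\iota_M$ as $kG$-maps, where $\theta s\colon M\ua^G\to N$ is $k$-linear and $\Tr_G(\theta s)$ is a genuine $kG$-map by construction. Hence $\rho$ factors through $\iota_M$, giving $(2)$. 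Dually, using the $k$-splitting $t\colon N\to N\ua^G$, $n\mapsto 1\otimes n$ of $\pi_N$, one checks $\rho=\pi_N\circ\Tr_G(t\theta)$, so $\rho$ factors through $\pi_N$, giving $(1)$. Both identities are instances of Lemma~\ref{le:tr-ideal}(1),(2) once one observes that $s$ and $t$ have the right compatibility; the key point is that $\theta_{M}$-type maps let one rewrite $\mathrm{id}$ as a trace (Lemma~\ref{le:tr-ind}).

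Next I would prove $(1)\Rightarrow(3)$ and $(2)\Rightarrow(3)$. If $\rho=\pi_N\circ\varphi$ for some $kG$-map $\varphi\colon M\to N\ua^G$, then write $\rho=\pi_N\circ\mathrm{id}_{N\ua^G}\circ\varphi=\pi_N\circ\Tr_G(\theta_N)\circ\varphi=\Tr_G(\pi_N\circ\theta_N\circ\varphi)$ by Lemma~\ref{le:tr-ideal}, exhibiting $\rho$ as a trace of the $k$-linear map $\pi_N\theta_N\varphi\colon M\to N$. The implication $(2)\Rightarrow(3)$ is the mirror image: if $\rho=\psi\circ\iota_M$ with $\psi\colon M\ua^G\to N$ a $kG$-map, then $\rho=\psi\circ\mathrm{id}_{M\ua^G}\circ\iota_M=\psi\circ\Tr_G(\theta_M)\circ\iota_M=\Tr_G(\psi\theta_M\iota_M)$, again by Lemma~\ref{le:tr-ideal} and Lemma~\ref{le:tr-ind}. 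This already closes the cycle $(1)\Rightarrow(3)\Rightarrow(2)\Rightarrow(3)\Rightarrow(1)$, so in fact only these four short computations are needed; the split-map bookkeeping in the first paragraph can be folded into them.

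The only place demanding care — the ``main obstacle'', such as it is — is keeping straight which maps are merely $k$-linear and which are $kG$-linear, and in particular verifying that $\theta_M$ (resp.\ $\theta_N$) is the correct idempotent-like map so that $\Tr_G(\theta_M)=\mathrm{id}$; everything else is an application of the functoriality in Lemma~\ref{le:tr-ideal}. I would present the argument as the closed cycle $(1)\Rightarrow(3)\Rightarrow(2)\Rightarrow(1)$ (or its reverse), each arrow being a one-line trace manipulation, and remark that the maps $\iota_M$ and $\pi_N$ are $k$-split so that the relevant composites with $\theta_M$, $\theta_N$ make sense.
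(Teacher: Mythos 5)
Your proof is correct and follows essentially the same route as the paper: the implications $(1)\Rightarrow(3)$ and $(2)\Rightarrow(3)$ are the paper's argument verbatim (insert $\Tr_G(\theta_N)=\id$, resp.\ $\Tr_G(\theta_M)=\id$, and absorb the $kG$-linear maps via Lemma~\ref{le:tr-ideal}), while your $\Tr_G(t\theta)$ and $\Tr_G(\theta s)$ in the direction $(3)\Rightarrow(1),(2)$ are exactly the maps $\rho'(m)=\sum_g g\otimes\theta(g^{-1}m)$ and $\rho''(g\otimes m)=g\theta(m)$ that the paper writes down explicitly, just repackaged through the $k$-splittings of $\iota_M$ and $\pi_N$.
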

\begin{proof} (1) $\Ra$ (3): If there exists a homomorphism
$\rho'\colon M\to N{\ua^G}$ of $kG$-modules with
$\rho=\pi_N\rho'$ then
$\rho=\Tr_G(\pi_N\theta_N\rho')$, by Lemmas
\ref{le:tr-ideal} and \ref{le:tr-ind}.

(2) $\Ra$ (3) follows as above.

(3) $\Ra$ (1) and (2): Suppose there exists $\theta\in\Hom_k(M,N)$
with $\Tr_G(\theta)=\rho$. The map $\rho'\in\Hom_{kG}(M,N{\ua^G})$
with $\rho'(m)=\sum_{g\in G} g\otimes \theta(g^{-1}m)$ satisfies
$\pi_N\rho'=\rho$, and $\rho''\in\Hom_{kG}(M{\ua^G},N)$ with
$\rho''(g\otimes m)=g\theta(m)$ satisfies $\rho''\iota_M=\rho$.
\end{proof}

\subsection*{Criteria for weak projectivity and weak injectivity} The
result below is due to D.~G.~Higman \cite{Higman:1954a}. Part (5) is
called \emph{Higman's criterion} for weak projectivity.

\begin{theorem}
\label{th:wp=wi} The following conditions on a $kG$-module $P$ are
equivalent:
\begin{enumerate}
\item The module $P$ is weakly projective.
\item The natural morphism $\pi_P\colon P{\ua^G}\to P$ is a split
surjection.
\item The module $P$ is weakly injective.
\item The natural morphism $\iota_P\colon P\to P{\ua^G}$ is a split
injection.
\item There exists $\theta\in\End_{k}(P)$ such that
$\Tr_{G}(\theta)=\id_{P}$.
\item There exists a $kG$-module $M$ such that $P$ is a direct summand
of $M{\ua^G}$.
\end{enumerate} In particular, $M{\ua^{G}}$ is weakly projective and
weakly injective for any $kG$-module $M$.
\end{theorem}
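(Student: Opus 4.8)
The plan is to establish a cycle of implications and then derive (6) and the final sentence separately. First I would prove the equivalence of (1), (2), and (5), and dually of (3), (4), and (5), using the material already developed. The key observation is that weak projectivity of $P$ is, by definition, a lifting property against $k$-split epimorphisms; since $0 \to \Ker\pi_P \to P\ua^G \xra{\pi_P} P \to 0$ is $k$-split exact, weak projectivity applied to $\id_P\colon P \to P$ immediately yields a $kG$-splitting of $\pi_P$, giving (1) $\Ra$ (2). Conversely, if $\pi_P$ splits, then any morphism $P \to N$ with $N$ a quotient in a $k$-split exact sequence lifts: pull back along the $k$-splitting of the sequence and push forward through $\pi_P$, using that $(-)\ua^G$ is exact on $k$-split sequences and that morphisms out of $M\ua^G$ are freely determined on $1\otimes M$; this is essentially Theorem~\ref{th:trace} applied with $\rho=\id_P$, so (2) $\Lra$ (5). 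The dual chain (3) $\Lra$ (4) $\Lra$ (5) runs identically, using the $k$-split exact sequence $0 \to P \xra{\iota_P} P\ua^G \to \Coker\iota_P \to 0$ and the other half of Theorem~\ref{th:trace}. Since (5) is self-dual, this links the ``projective'' and ``injective'' halves, proving (1) $\Lra$ (3).

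Next I would handle (6). The implication (2) $\Ra$ (6) is immediate: a splitting of $\pi_P\colon P\ua^G \to P$ exhibits $P$ as a direct summand of $P\ua^G = M\ua^G$ with $M = P$. For (6) $\Ra$ (5): suppose $P$ is a direct summand of $M\ua^G$, say with inclusion $\sigma\colon P \to M\ua^G$ and retraction $\tau\colon M\ua^G \to P$, both $kG$-linear, so $\tau\sigma = \id_P$. By Lemma~\ref{le:tr-ind}, $\id_{M\ua^G} = \Tr_G(\theta_M)$, hence by Lemma~\ref{le:tr-ideal}(1) and (2),
\[
\id_P = \tau\,\id_{M\ua^G}\,\sigma = \tau\,\Tr_G(\theta_M)\,\sigma = \Tr_G(\tau\theta_M\sigma)\,,
\]
and $\tau\theta_M\sigma \in \End_k(P)$ witnesses (5). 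This closes the loop.

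Finally, the concluding assertion that $M\ua^G$ is weakly projective and weakly injective for every $kG$-module $M$ is just condition (6) with $P = M\ua^G$, taking the module to be $M$ itself and the direct summand to be the whole module; given the equivalence of (6) with (1) and (3), this is immediate. I do not anticipate a serious obstacle here: the trace-map lemmas and Theorem~\ref{th:trace} do all the real work, and what remains is to organise the implications so that the ``weakly projective $\Lra$ weakly injective'' phenomenon — the crux of Higman's theorem — falls out of the self-duality of condition (5). The one point requiring a little care is making the lifting/extension arguments for (2) $\Ra$ (1) and (4) $\Ra$ (3) genuinely rest on exactness of $(-)\ua^G$ on $k$-split sequences rather than on any hidden exactness of $\Hom_{kG}$, but this is exactly what the $k$-split exact structure was set up to provide.
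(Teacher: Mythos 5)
Your proposal is correct and follows essentially the same route as the paper: the equivalences (1)--(2)--(5) and (3)--(4)--(5) via the trace lemmas (Theorem~\ref{th:trace} applied to $\id_P$), and (6) handled by observing that the trace condition (5) holds for induced modules and descends to direct summands, exactly as in Lemmas~\ref{le:tr-ind} and \ref{le:tr-ideal}. The only cosmetic difference is that you carry out the lifting step (2) $\Ra$ (1) via the universal property of $P\ua^G$ and the chosen splitting, whereas the paper takes $\Tr_G$ of a $k$-linear lift; these are the same computation in different packaging.
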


\begin{proof} We first prove that conditions (1), (2), and (5) are
equivalent; analogous arguments settle the equivalence of (3), (4) and
(5).

(1) $\Ra$ (2): If $P$ is weakly projective then the identity morphism
of $P$ lifts to a splitting of $P{\ua^G}\to P$.

(2) $\Ra$ (5): If $\alpha\colon P\to P{\ua^G}$ is a splitting then by
Lemma \ref{le:tr-ideal} we have
\[
\Tr_G(\pi_P\theta_{P{\ua^G}}\alpha)=\pi_P\Tr_G(\theta_{P{\ua^G}})\alpha
=\pi_P\alpha=\id_P\,.
\]

(5) $\Ra$ (1): Let $\theta\in\End_k(P)$ with
$\Tr_G(\theta)=\id_P$. Given a $k$-split exact sequence $0\to L\to
M\to N\to 0$ and a morphism $\alpha\col P\to N$ of $kG$-modules,
choose a $k$-lift $\wt\alpha\colon P\to M$ of $\alpha$; it follows
from Lemma \ref{le:tr-ideal} that $\Tr_G(\wt\alpha\theta)$ is a
$kG$-lift of $\alpha$.

(4) $\Ra$ (6): Take $M=P$.

(6) $\Ra$ (5): A module of the form $M{\ua^G}$ satisfies (5) by Lemma
\ref{le:tr-ind}. It remains to observe that this property descends to
direct summands, by Lemma~\ref{le:tr-ideal}.
\end{proof}

The theorem above shows that the exact category $\Mod (kG)$ is a
\emph{Frobenius category}. This means that there are enough projective
objects and enough injective objects, and that both coincide; see
Section~I.2 of Happel \cite{Happel:1988a} for details.

When $k$ is a field, weak injectivity is equivalent to
injectivity. The following corollary of Theorem~\ref{th:wp=wi} is thus
a generalisation of Maschke's theorem.

\begin{corollary}
\label{cor:maschke} 
Let $k$ be a commutative ring and $G$ a finite group. The following
conditions are equivalent:
\begin{enumerate}
\item Each $kG$-module is weakly injective.
\item The trivial $kG$-module $k$ is weakly injective.
\item The order of $G$ is invertible in $k$.
\end{enumerate}
\end{corollary}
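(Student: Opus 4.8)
The plan is to deduce all three equivalences from Higman's criterion, part~(5) of Theorem~\ref{th:wp=wi}, which characterises weak injectivity of a $kG$-module $P$ by the existence of $\theta\in\End_k(P)$ with $\Tr_G(\theta)=\id_P$. With that tool in hand each implication becomes a short computation with the trace map.

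The implication $(1)\Ra(2)$ is immediate, since $k$ is itself a $kG$-module. For $(2)\Ra(3)$, apply Higman's criterion to $P=k$: since $\End_k(k)\cong k$, such a $\theta$ is multiplication by a scalar $c\in k$, and because $G$ acts trivially on $k$ one computes $\Tr_G(\theta)(1)=\sum_{g\in G}g\bigl(\theta(g^{-1}\cdot 1)\bigr)=|G|\,c$. Hence $\Tr_G(\theta)=\id_k$ forces $|G|c=1$, so $|G|$ is invertible in $k$. For $(3)\Ra(1)$, given that $|G|$ is invertible and an arbitrary $kG$-module $M$, take $\theta=|G|^{-1}\id_M\in\End_k(M)$; since scalar multiplication commutes with the $G$-action, $\Tr_G(\theta)(m)=\sum_{g\in G}g\bigl(|G|^{-1}g^{-1}m\bigr)=|G|^{-1}|G|\,m=m$, so $\Tr_G(\theta)=\id_M$ and $M$ is weakly injective by Higman's criterion.

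There is no real obstacle here: the content has already been done in Theorem~\ref{th:wp=wi}, and the only point is to recognise that the trace-map formulation—rather than a direct verification of the extension property for $k$-split exact sequences—is the efficient route. If one wanted an even more self-contained argument for $(2)\Ra(3)$ one could instead use $(2)\Rightarrow(4)$ of that theorem, splitting $\iota_k\colon k\to k{\ua^G}$ and reading off that $1$ lies in the image of the splitting composed with the augmentation, but the trace-map computation above is the cleanest.
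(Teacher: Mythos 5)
Your proof is correct and follows essentially the same route as the paper: both nontrivial implications are deduced from Higman's criterion (Theorem~\ref{th:wp=wi}(5)), with $(2)\Ra(3)$ coming from $\Tr_G(\theta)=|G|\theta$ for $\theta\in\End_k(k)\cong k$ and $(3)\Ra(1)$ from $\Tr_G\bigl(\tfrac{1}{|G|}\id_M\bigr)=\id_M$. You have merely written out the trace computations in more detail than the paper does.
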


\begin{proof} Both non-trivial implications follow from Higman's
criterion: (2) $\Ra$ (3) because $\Tr_{G}(\theta)=|G|\theta$ for any
$\theta\in\End_{k}(k)\cong k$, whilst (3) $\Ra$ (1) because
$\Tr_{G}(\frac 1{|G|}\id_{P})=\id_{P}$ for any $kG$-module $P$.
\end{proof}

\section{An example}

In this section, we construct modules that are filtered colimits of
weakly injective $kG$-modules but are not themselves weakly injective.
In what follows, given an integer $n$, we write $k_{(n)}$ for
the localisation of $k$ that inverts all integers $r$ such that
$(r,n)=1$. For any $kG$-module $M$, we denote $M_{(n)}$ the
$kG$-module $k_{(n)}\otimes_{k}M$; this can also be described as a
filtered colimit:
\[ M_{(n)}=\colim_{(r,n)=1} (M\xra{r} M)\,.
\]

\begin{definition}
\label{defn:pm} Let $k$ be a commutative ring, $G$ a finite group, and
$n$ an integer. For any $kG$-module $M$ define $\varGamma_{n}{M}$ by
forming the following exact sequence
\begin{equation}\label{eq:iota}
0\lto M\lto M\ua^G\oplus M_{(n)}\lto\varGamma_{n}{M}\lto 0
\end{equation} 
where the monomorphism sends $m\in M$ to $(\iota_M(m), m)$. This
sequence is $k$-split exact, since $\iota_{M}$ is $k$-split.
\end{definition}

\begin{theorem}
\label{th:Y} If $|G|$ divides $n$, the $kG$-module $\varGamma_{n}{M}$
is a filtered colimit of weakly injective $kG$-modules.
\end{theorem}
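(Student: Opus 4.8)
The plan is to realise $\varGamma_{n}M$ as an explicit filtered colimit of modules that are manifestly weakly injective, using that the only problematic summand in the defining sequence \eqref{eq:iota}, namely $M_{(n)}$, is itself a filtered colimit of copies of $M$. Concretely, $M_{(n)}=\colim_{(r,n)=1}(M\xra{r}M)$ over the positive integers coprime to $n$ ordered by divisibility; the constant diagram $M$ has colimit $M$, the diagram $r\mapsto M\ua^G\oplus M$ with transition maps $\id_{M\ua^G}\oplus(\cdot\,s/r)$ for $r\mid s$ has colimit $M\ua^G\oplus M_{(n)}$, and the $k$-split monomorphisms $M\xra{(\iota_M,\,r)}M\ua^G\oplus M$ assemble into a morphism of diagrams whose colimit is the monomorphism of \eqref{eq:iota}. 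Since filtered colimits are exact in $\Mod(kG)$, passing to cokernels gives
\[
\varGamma_{n}M\;\cong\;\colim_{(r,n)=1}N_{r},\qquad N_{r}:=\Coker\bigl(M\xra{(\iota_M,\,r)}M\ua^G\oplus M\bigr),
\]
with transition maps induced by $\id_{M\ua^G}\oplus(\cdot\,s/r)$. So it suffices to prove that each $N_{r}$ is weakly injective.

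Since $(r,n)=1$ and $|G|$ divides $n$ we have $(r,|G|)=1$, and I would finish by an argument in the stable category. By construction $N_{r}$ is the pushout of the $k$-split inclusion $\iota_M\colon M\to M\ua^G$ along $r\cdot\id_{M}\colon M\to M$; since $M\ua^G$ is weakly injective (Theorem~\ref{th:wp=wi}), this realises $N_{r}$, in the stable category $\StMod(kG)$ of the Frobenius category $\Mod(kG)$, as a mapping cone of $r\cdot\id_{M}$, so there is a distinguished triangle in $\StMod(kG)$ whose first morphism is $r\cdot\id_{M}$. Now $|G|\cdot\id_{M}=\pi_{M}\iota_{M}$ (immediate from the definitions of $\iota_M$ and $\pi_M$), so $|G|\cdot\id_{M}$ factors through the weakly injective module $M\ua^G$ and hence vanishes in $\StMod(kG)$; therefore the ring $\End_{\StMod(kG)}(M)$ is annihilated by $|G|$, and as $(r,|G|)=1$ the element $r\cdot\id_{M}$ is a unit in it, i.e.\ an isomorphism in $\StMod(kG)$. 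The cone of an isomorphism is zero, so $N_{r}\cong 0$ in $\StMod(kG)$; equivalently $N_{r}$ is weakly injective, as required.

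I expect the main friction to be in the first step: pinning down the directed system together with its transition maps, and verifying that \eqref{eq:iota} really is the filtered colimit of the sequences $0\to M\to M\ua^G\oplus M\to N_{r}\to 0$ — together with the routine but needed observation that the concrete quotient $N_{r}$ coincides with a mapping cone of $r\cdot\id_{M}$, so that the triangle used above is genuinely available. The remainder is formal. As an alternative to the stable-category argument one can apply Higman's criterion (Theorem~\ref{th:wp=wi}(5)) directly: choosing integers $a,b$ with $ar+b|G|=1$ and using the $k$-endomorphism $\theta_{M}$ of $M\ua^G$ with $\Tr_{G}(\theta_{M})=\id$, one writes down an explicit $\theta\in\End_{k}(N_{r})$ with $\Tr_{G}(\theta)=\id_{N_{r}}$; this is elementary but more computational.
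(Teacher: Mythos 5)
Your proposal is correct and follows essentially the same route as the paper: both write $\varGamma_{n}M$ as the filtered colimit over $r$ coprime to $n$ of the pushouts of $\iota_M$ along $r\cdot\id_M$, and both show each such pushout is weakly injective by proving $r\cdot\id_M$ is an isomorphism in $\StMod(kG)$ (you via $|G|\cdot\id_M=\pi_M\iota_M$ and the cone of a stable isomorphism being zero; the paper via the trace identity $\Tr_G(\tfrac{n}{|G|}\id_M)=n\id_M$ and comparing the pushout with $M\ua^G$). These differences are cosmetic, so no further changes are needed.
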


\begin{proof} It is expedient to use the stable category,
$\StMod (kG)$, defined in Section~\ref{se:stable_category}.

The $kG$-module $\varGamma_{n}{M}$ is given by the following pushout
\[ \xymatrix{ M\ar[r]^-{\iota_M}\ar[d]&M\ua^G \ar[d] \\ M_{(n)} \ar[r]
& \varGamma_nM}
  \] and this is a filtered colimit of pushouts of the form
\[ \xymatrix{ M\ar[r]^-{\iota_M}\ar[d]^r&M\ua^G\ar[d] \\ M\ar[r] &
M_{(r)}}
  \] 
with $(r,n)=1$. In particular, $\varGamma_{n}{M}$ is a filtered
colimit of the $M_{(r)}$. The hypothesis is that $|G|$ divides $n$, so
one gets
\[ 
\Tr_{G}(\frac n{|G|}\id_{M})=\frac n{|G|}\Tr_{G}(\id_{M}) = \frac
n{|G|} (|G| \id_{M}) = n\id_{M}\,.
\] 
Therefore the multiplication morphism $M\xra{n} M$ factors through
a weakly injective module, by Theorem~\ref{th:trace}. For each integer
$r$ with $(r,n)=1$, there exist integers $a$ and $b$ such that
$na+rb=1$, so the preceding computation implies that the morphism
$M\xra{r} M$ is an isomorphism in the stable category $\StMod (kG)$,
with inverse the multiplication by $a$. It follows that the morphism
on the right in the diagram is also an isomorphism. Therefore each $M_{(r)}$
is weakly injective.
\end{proof}

The $kG$-module $\varGamma_{n}{M}$ need not be itself weakly
injective.

\begin{example}\label{ex:pz} 
  Let $G$ be a finite group and set $k=\bbZ$.  Given $n>1$, the $\bbZ
  G$-module $\varGamma_{n}{\bbZ}$ constructed above is weakly
  injective if and only if $|G|=1$.

  To prove this, first observe that $\Hom_{\bbZ}(\bbZ_{(n)},\bbZ)=0$
  so $\Hom_{\bbZ G}(\bbZ_{(n)},\bbZ)=0$ as well. If
  $\varGamma_{n}{\bbZ}$ is weakly injective, then the sequence
  \eqref{eq:iota} splits and $\iota_\bbZ$ is a split monomorphism,
  since the second component of the inverse of $\bbZ\to
  \bbZ\ua^G\oplus \bbZ_{(n)}$ is zero.  It follows that $\bbZ$ is
  weakly injective, by Theorem~\ref{th:wp=wi}, and therefore $|G|$ is
  invertible in $\bbZ$, by Corollary~\ref{cor:maschke}. Thus
  $|G|=1$. The other direction is clear.
\end{example}

\section{Filtered colimits of weakly injective modules}
\label{se:colimits} Motivated by the example in the previous section
we study filtered colimits of weakly injective modules.

\begin{definition} We say that a $kG$-module is \emph{cw-injective} if
it is isomorphic in $\Mod (kG)$ to a filtered {\large\bf c}olimit of
{\large\bf w}eakly injective modules.
\end{definition}

Weakly injective modules are cw-injective, but not conversely; see
Example \ref{ex:pz}. However, these notions coincide for finitely
presented modules, as is proved in part (2) of the next result.

\begin{lemma}
\label{le:cw-perp} Let $M$ be a $kG$-module.
\begin{enumerate}
\item The module $M$ is cw-injective if and only if every morphism
  $N\to M$ with $N$ finitely presented factors through a weakly
  injective module.
\item If $M$ is finitely presented and cw-injective, then it is weakly
injective.
\end{enumerate}
\end{lemma}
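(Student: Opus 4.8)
The plan is to prove the two parts in sequence, with part (2) following quickly from part (1).

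\medskip

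For part (1), the ``only if'' direction is the essential content. Suppose $M = \colim_i P_i$ is a filtered colimit of weakly injective modules $P_i$, and let $\alpha \colon N \to M$ be a morphism with $N$ finitely presented. The key point is that a finitely presented module is \emph{categorically compact} with respect to filtered colimits in the appropriate sense: since $N$ is finitely presented, $\Hom_{kG}(N, -)$ commutes with filtered colimits, so $\alpha$ factors as $N \xra{\beta} P_i \to M$ for some index $i$. Because each $P_i$ is weakly injective, $\alpha$ factors through a weakly injective module, namely $P_i$. For the ``if'' direction, suppose every morphism from a finitely presented module to $M$ factors through a weakly injective one. I would write $M$ as a filtered colimit $M = \colim_j N_j$ of its finitely presented submodules (or more robustly, use a presentation of $M$ as a filtered colimit of finitely presented modules along the canonical diagram indexed by maps $N_j \to M$ with $N_j$ finitely presented). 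For each such $N_j \to M$, factor it through a weakly injective module $W_j$; the system of the $W_j$ can be arranged into a filtered diagram whose colimit is again $M$, exhibiting $M$ as cw-injective. The main obstacle here is organising the factorisations $N_j \to W_j \to M$ into a genuine \emph{filtered} system with the right transition maps and the right colimit — one must check that the $W_j$ can be chosen compatibly, or alternatively invoke a standard argument (as in the theory of pure-injective or fp-injective objects, cf.\ the ``$\mathcal{X}^\perp$'' characterisations) that the closure of the weakly injectives under filtered colimits is detected on finitely presented objects. I expect this bookkeeping — making the diagram of $W_j$'s filtered and colimit-preserving — to be where the real care is needed, though it is a well-trodden technique.

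\medskip

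For part (2), suppose $M$ is finitely presented and cw-injective. By part (1), the identity morphism $\id_M \colon M \to M$, which has finitely presented source, factors through a weakly injective module $W$, say $\id_M = (M \xra{s} W \xra{t} M)$. Then $M$ is a direct summand of $W$ (it is the image of the idempotent $st$, and $M$ is a retract of $W$ via $s$ and $t$). Since direct summands of weakly injective modules are weakly injective — this is immediate from, e.g., Higman's criterion in Theorem~\ref{th:wp=wi}, or directly from the defining extension property — it follows that $M$ is weakly injective.

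\medskip

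In summary: part (1) rests on compactness of finitely presented modules against filtered colimits (one direction) together with a filtered-diagram assembly argument (the other direction), and part (2) is the split-idempotent trick plus closure of weak injectives under summands. The delicate step is the diagram bookkeeping in the ``if'' direction of part (1).
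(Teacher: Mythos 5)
Your ``only if'' direction of (1) and your part (2) are exactly the paper's arguments and are fine. The problem is the ``if'' direction of (1), where the step you yourself flag as ``bookkeeping'' is in fact the one genuine idea needed, and your proposal does not supply it. Writing $M=\colim_{N\to M}N$ over the comma category of finitely presented modules mapping to $M$ and factoring each $N\to M$ through some weakly injective $W_N$ does not by itself produce a filtered diagram of weakly injective modules with colimit $M$: the $W_N$ are arbitrary, need not be finitely presented, need not admit any compatible transition maps, and a map $N\to W_N$ cannot be replaced by a map to a finitely presented submodule of $W_N$, since submodules of weakly injectives are not weakly injective in general. The fallback you mention (``invoke a standard argument that the closure of the weakly injectives under filtered colimits is detected on finitely presented objects'') is essentially the statement being proved, so it cannot be cited.

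The paper closes this gap with the trace map. If $N\to M$ factors through a weakly injective module, then by Theorem~\ref{th:trace} (together with Theorem~\ref{th:wp=wi}; concretely, split $\iota_{W}$ and use naturality of $\iota$) it factors through the canonical monomorphism $\iota_N\colon N\to N\ua^G$. The point is that $N\ua^G=kG\otimes_k N$ is simultaneously weakly injective \emph{and} finitely presented, so the objects $W\to M$ with $W$ finitely presented and weakly injective form a cofinal full subcategory of the (filtered) comma category computing $M=\colim_{N\to M}N$; a cofinal full subcategory of a filtered category is filtered, and its colimit is again $M$. This canonical, functorial choice $N\mapsto N\ua^G$ is precisely what replaces your ad hoc $W_j$'s and makes the diagram assembly automatic. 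With that ingredient inserted, your outline becomes the paper's proof; without it, the ``if'' direction is incomplete.
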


\begin{proof} (1) If $M=\colim M_\alpha$, a filtered colimit of weakly
injective modules $M_\alpha$, then each morphism $N\to M$ with $N$
finitely presented factors through one of the structure morphisms
$M_\alpha\to M$.

As to the converse, one has that $M=\colim_{N\to M} N$, with the
colimit taken over the category of morphisms $N\to M$ where $N$ runs
through all finitely presented $kG$-modules. Suppose that each
morphism $N\to M$ factors through a weakly injective module. Thus each
morphism $N\to M$ factors through $N\ua^G$, by
Theorem~\ref{th:trace}. The module $N\ua^G$ is finitely presented and
it follows that the morphisms $N\to M$ with $N$ weakly injective form
a cofinal subcategory. Thus $M$ is a filtered colimit of weakly
injective $kG$-modules.

(2) Let $M$ be a finitely presented $kG$-module. If $M$ is
cw-injective, then (1) implies that the identity morphism $\id_M$
factors through a weakly injective module. Thus $M$ is weakly
injective, by Theorem~\ref{th:wp=wi}.
\end{proof}

Next we present criteria for cw-injectivity in terms of the purity of
certain canonical morphisms. For background on purity, see the books
of Jensen and Lenzing \cite{Jensen/Lenzing:1989a} and Prest
\cite{Prest:1988a,Prest:2009a}.

\begin{definition}
\label{defn:purity} An exact sequence
\[ 0\to M' \to M \to M'' \to 0
\] of $kG$-modules is said to be \emph{pure exact} if for every right
$kG$-module $N$, the sequence
\[ 0 \to N\otimes_{kG} M' \to N \otimes_{kG} M \to N \otimes_{kG}
M''\to 0
\] is exact. This is equivalent to the statement that for any finitely
presented $kG$-module $P$, each morphism $P \to M''$ lifts to a
morphism $P\to M$, and to the statement that the sequence is a
filtered colimit of split exact sequences.

A morphism $M\to M''$ appearing in a pure exact sequence as above is
called a \emph{pure epimorphism}, whilst $M'\to M$ is called a
\emph{pure monomorphism}.
\end{definition}

\subsection*{Criteria for cw-injectivity} The following tests for
cw-injectivity are analogues of the criterion for weak injectivity
formulated in Theorem~\ref{th:wp=wi}.

\begin{theorem}\label{th:cw-injective} 
  The following properties of a $kG$-module $M$ are equivalent:
\begin{enumerate}
\item The module $M$ is cw-injective.
\item The natural morphism $\pi_{M}\colon M{\ua^G} \to M$ is a pure epimorphism.
\item The natural morphism $\iota_{M}\colon M \to M{\ua^G}$ is a pure monomorphism.
\end{enumerate}
\end{theorem}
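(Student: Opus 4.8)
The plan is to prove the cycle $(1)\Ra(3)\Ra(2)\Ra(1)$, mimicking the structure of the proof of Higman's criterion (Theorem~\ref{th:wp=wi}) but with ``split'' replaced by ``pure''. The guiding principle is that purity is detected by finitely presented modules, so each implication should reduce to the corresponding split statement tested against a finitely presented $kG$-module, which is where Theorem~\ref{th:trace} and the structure of $\ua^G$ enter.

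\emph{(1) $\Ra$ (3).} Suppose $M$ is cw-injective, so $M=\colim M_\alpha$ with each $M_\alpha$ weakly injective. For each $\alpha$ the morphism $\iota_{M_\alpha}\colon M_\alpha\to M_\alpha\ua^G$ is a \emph{split} monomorphism, by Theorem~\ref{th:wp=wi}. Since $(-)\ua^G=kG\otimes_k-$ commutes with filtered colimits and $\iota$ is natural, $\iota_M$ is the filtered colimit of the split monomorphisms $\iota_{M_\alpha}$; hence it is a pure monomorphism, since a filtered colimit of split exact sequences is pure exact by Definition~\ref{defn:purity}. (One should check the cokernels assemble correctly, i.e.\ $\Coker\iota_M=\colim\Coker\iota_{M_\alpha}$, which is immediate from exactness of filtered colimits.)

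\emph{(3) $\Ra$ (2).} This is the formal ``Frobenius'' step: the two canonical morphisms are related by the endomorphism $\theta_M$ of $M\ua^G$ and the identities of Lemmas~\ref{le:tr-ideal} and~\ref{le:tr-ind}. Concretely, $\pi_M\theta_M\iota_M=\id_M$ (a direct computation, or citing Theorem~\ref{th:trace}), so $\iota_M$ splits off a copy of $M$ inside $M\ua^G$ compatibly with $\pi_M$; dualising the pure monomorphism $\iota_M$ through this retraction shows $\pi_M$ is a pure epimorphism. Alternatively, and perhaps more cleanly, I would observe that purity of $\iota_M$ means: for every finitely presented $N$, every map $N\to \Coker\iota_M$ lifts to $M\ua^G$; then a diagram chase using $\pi_M\iota_M=0$... — actually the slick route is to note $M\ua^G\cong M\oplus\Coker\iota_M$ is false in general, so I will instead go directly via the trace characterisation, as in Theorem~\ref{th:wp=wi}(2)$\Leftrightarrow$(4).

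\emph{(2) $\Ra$ (1).} Assume $\pi_M\colon M\ua^G\to M$ is a pure epimorphism. By Lemma~\ref{le:cw-perp}(1) it suffices to show every morphism $\varphi\colon N\to M$ with $N$ finitely presented factors through a weakly injective module; by Theorem~\ref{th:trace} it is equivalent to factor $\varphi$ through $N\ua^G$, i.e.\ to write $\varphi=\Tr_G(\theta)$ for some $\theta\in\Hom_k(N,M)$. Since $\pi_M$ is a pure epimorphism and $N$ is finitely presented, the map $\varphi$ lifts along $\pi_M$ to a $kG$-morphism $\psi\colon N\to M\ua^G$ with $\pi_M\psi=\varphi$. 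Now set $\theta=\pi_M\theta_M\psi\in\Hom_k(N,M)$ (here $\pi_M\theta_M$ is only $k$-linear, but that is all we need); by Lemmas~\ref{le:tr-ideal} and~\ref{le:tr-ind},
\[
\Tr_G(\theta)=\Tr_G(\pi_M\theta_M\psi)=\pi_M\,\Tr_G(\theta_M)\,\psi=\pi_M\psi=\varphi\,.
\]
Wait — $\theta_M$ and $\psi$ are composed with a $kG$-map on the wrong side for Lemma~\ref{le:tr-ideal}(1) to apply to $\psi$; instead write $\theta=\pi_M\,\theta_M\,\psi$ and use Lemma~\ref{le:tr-ideal}(2) with $\beta=\pi_M$ and Lemma~\ref{le:tr-ideal}(1) with $\alpha=\psi$, which is legitimate since $\psi$ is a $kG$-morphism. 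Then Theorem~\ref{th:trace}(3)$\Ra$(2) gives the desired factorisation of $\varphi$ through $N\ua^G$, and Lemma~\ref{le:cw-perp}(1) finishes the proof.

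\textbf{Main obstacle.} The one genuinely delicate point is \emph{(2) $\Ra$ (1)}: one must be careful that the lift $\psi$ supplied by purity is a $kG$-morphism (so that Lemma~\ref{le:tr-ideal} applies with it as the $kG$-part) while $\pi_M\theta_M$ is merely $k$-linear (which is fine, since it plays the role of ``$\theta$'' in the trace). Getting the two sides of the composites matched to the two parts of Lemma~\ref{le:tr-ideal} is where a careless argument goes wrong; everything else is a routine transcription of the proof of Theorem~\ref{th:wp=wi} with ``split'' upgraded to ``pure'' via Lemma~\ref{le:cw-perp}(1).
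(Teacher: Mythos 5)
Your steps (1)~$\Ra$~(3) and (2)~$\Ra$~(1) are correct: the first is exactly the paper's argument (filtered colimit of split monomorphisms is a pure monomorphism), and the second is correct though more elaborate than necessary --- once purity of $\pi_M$ gives a $kG$-linear lift $\psi\colon N\to M\ua^G$ of $\varphi$, the factorisation $\varphi=\pi_M\psi$ already passes through the weakly injective module $M\ua^G$, so Lemma~\ref{le:cw-perp}(1) applies with no trace computation needed.

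The genuine gap is (3)~$\Ra$~(2), and with it your cycle $(1)\Ra(3)\Ra(2)\Ra(1)$ collapses: nothing in your text connects (3) back to (1) or (2). None of the three routes you sketch goes through. The retraction $\pi_M\theta_M$ of $\iota_M$ is only $k$-linear, so it cannot transport $kG$-purity from $\iota_M$ to $\pi_M$; purity is about $kG$-linear lifts of $kG$-linear maps from finitely presented $kG$-modules, and a $k$-linear splitting of $\iota_M$ is automatic anyway (the sequence is $k$-split). The diagram chase ``using $\pi_M\iota_M=0$'' starts from a false identity, since $\pi_M\iota_M=|G|\cdot\id_M$. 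And the appeal to ``the trace characterisation, as in Theorem~\ref{th:wp=wi}(2)$\Leftrightarrow$(4)'' cannot work either: that equivalence passes through the existence of $\theta\in\End_k(M)$ with $\Tr_G(\theta)=\id_M$, which is precisely weak injectivity, and Example~\ref{ex:pz} shows cw-injective modules need not be weakly injective, so there is no such single trace element available under hypothesis (3). The repair is the paper's implication (3)~$\Ra$~(1): if $\iota_M$ is a pure monomorphism, then in the $k$-split exact sequence $0\to M\to M\ua^G\to M'\to 0$ the map $M\ua^G\to M'$ is a pure epimorphism; given $\varphi\colon N\to M$ with $N$ finitely presented, compare this sequence with $0\to N\to N\ua^G\to N'\to 0$, lift $N'\to M'$ to $M\ua^G$ (possible since $N'$ is finitely presented), and conclude that $\varphi$ extends along $\iota_N$ to $N\ua^G$, whence $M$ is cw-injective by Lemma~\ref{le:cw-perp}(1). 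Together with the symmetric argument (1)~$\Ra$~(2) (filtered colimit of the split epimorphisms $M_\alpha\ua^G\to M_\alpha$), this closes the equivalence.
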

\begin{proof} (1) $\Ra$ (2) and (3): If $M$ is a filtered colimit of
weakly injective modules $M_\alpha$, then $M{\ua^G}\to M$ is a
filtered colimit of split epimorphisms $M_\alpha{\ua^G}\to M_\alpha$
and hence is a pure epimorphism. Similarly, $M\to M{\ua^G}$ is a filtered
colimit of split monomorphisms $M_\alpha\to M_\alpha{\ua^G}$ and hence
is a pure monomorphism.

(2) $\Ra$ (1): When $M{\ua^G}\to M$ is a pure epimorphism any morphism
$N\to M$ with $N$ finitely presented lifts to $M{\ua^G}$, so
Lemma~\ref{le:cw-perp} yields that $M$ is cw-injective.

(3) $\Ra$ (1): Let $N\to M$ be a morphism of $kG$-modules with $N$
finitely presented. We obtain the following commutative diagram with
exact rows.
\[ \xymatrix{
0\ar[r]&N\ar[r]\ar[d]&N\ua^G\ar[r]\ar[d]&N'\ar[r]\ar[d]&0\\
0\ar[r]&M\ar[r]&M\ua^G\ar[r]&M'\ar[r]&0}
\] When $M\to M{\ua^G}$ is a pure monomorphism, $M\ua^G\to M'$ is a
pure epimorphism, so the morphism $N'\to M'$ lifts to $M\ua^G$; note
that $N'$ is also finitely presented. Thus the morphism $N\to M$
extends to $N\ua^G$, so $M$ is cw-injective, by
Lemma~\ref{le:cw-perp}.
\end{proof}

\begin{corollary}
\label{co:cw-injective} The class of cw-injective modules is closed
under
\begin{enumerate}
\item filtered colimits,
\item products, and
\item pure submodules and pure quotient modules.
\end{enumerate}
\end{corollary}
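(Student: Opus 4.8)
The plan is to deduce everything from the two characterisations already available: Lemma~\ref{le:cw-perp}(1), which says $M$ is cw-injective exactly when every morphism $N\to M$ with $N$ finitely presented factors through a weakly injective module (equivalently, by the proof of that lemma together with Theorem~\ref{th:trace}, through the canonical morphism $\iota_N\colon N\to N{\ua^G}$), and Theorem~\ref{th:cw-injective}, which reformulates cw-injectivity as purity of $\pi_M$ or of $\iota_M$. I will also use the following elementary properties of purity, all immediate from Definition~\ref{defn:purity}: a composite of pure monomorphisms is a pure monomorphism, and if a composite $vu$ is a pure monomorphism then $u$ is a pure monomorphism (apply $N\otimes_{kG}-$, which carries pure monomorphisms to injections); dually, a composite of pure epimorphisms is a pure epimorphism, and if $vu$ is a pure epimorphism then $v$ is one (lift maps out of finitely presented modules). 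With this in hand, part~(1) and the pure-quotient half of~(3) are immediate from Lemma~\ref{le:cw-perp}(1): if $M=\colim_\lambda M_\lambda$ is a filtered colimit of cw-injective modules and $N$ is finitely presented, a morphism $N\to M$ factors through some $M_\lambda$ and hence, as $M_\lambda$ is cw-injective, through a weakly injective module; and if $M\to M''$ is a pure epimorphism with $M$ cw-injective, a morphism $N\to M''$ with $N$ finitely presented lifts to $N\to M$, which in turn factors through a weakly injective module.

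For part~(2), let $(M_i)_{i\in I}$ be cw-injective modules and $f\colon N\to\prod_i M_i$ a morphism with $N$ finitely presented. Each component $f_i\colon N\to M_i$ factors through a weakly injective module, hence through $\iota_N\colon N\to N{\ua^G}$ as recalled above; write $f_i=h_i\iota_N$ with $h_i\colon N{\ua^G}\to M_i$. Then $h=(h_i)_{i\in I}\colon N{\ua^G}\to\prod_i M_i$ satisfies $h\iota_N=f$, so $f$ factors through the weakly injective module $N{\ua^G}$, and Lemma~\ref{le:cw-perp}(1) gives that $\prod_i M_i$ is cw-injective. (Alternatively one can use criterion~(2) of Theorem~\ref{th:cw-injective}: since $kG$ is finitely generated free over $k$ there is a natural isomorphism $(\prod_i M_i){\ua^G}\cong\prod_i(M_i{\ua^G})$ identifying $\pi_{\prod_i M_i}$ with $\prod_i\pi_{M_i}$, and a product of pure epimorphisms is a pure epimorphism, as one checks by lifting, componentwise, maps out of finitely presented modules.)

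The remaining case, pure submodules in part~(3), is the one that needs care, and I expect it to be the main obstacle: purity of a monomorphism $M'\to M$ controls the lifting of maps into the quotient, not into the submodule, so Lemma~\ref{le:cw-perp}(1) does not apply directly. Here I would instead use criterion~(3) of Theorem~\ref{th:cw-injective}. Let $j\colon M'\to M$ be a pure monomorphism with $M$ cw-injective. Then $\iota_M\colon M\to M{\ua^G}$ is a pure monomorphism, so the composite $\iota_M\circ j$ is a pure monomorphism. By naturality of $\iota$ one has $\iota_M\circ j=j{\ua^G}\circ\iota_{M'}$, where $j{\ua^G}=\id_{kG}\otimes_k j$; viewing this as a composite $vu$ with $u=\iota_{M'}$, we conclude that $\iota_{M'}$ is a pure monomorphism, whence $M'$ is cw-injective by Theorem~\ref{th:cw-injective}. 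The only genuinely substantive points are thus the behaviour of purity under composition and under passing to a first factor (and the dual statements for pure epimorphisms used above); everything else is a routine chase through Definition~\ref{defn:purity}.
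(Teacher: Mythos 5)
Your argument is correct and follows essentially the same route as the paper: parts (1) and (2) via the factorisation criterion of Lemma~\ref{le:cw-perp}(1), and pure submodules via Theorem~\ref{th:cw-injective} combined with naturality of $\iota$ and the fact that the first factor of a pure monomorphism is a pure monomorphism. Your direct lifting argument for pure quotients is just the dual the paper leaves as ``analogous,'' so there is nothing substantively different to report.
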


\begin{proof} We use the characterisation from Lemma~\ref{le:cw-perp}
to verify (1) and (2).

(1) Let $M=\colim M_\alpha$ be a filtered colimit of cw-injective
modules. Each morphism $N\to M$ with $N$ finitely presented factors
through one of the structure morphisms $M_\alpha\to M$. The morphism
$N\to M_\alpha$ then factors through a weakly injective module. Thus
$M$ is cw-injective.

(2) Let $M=\prod_\alpha M_\alpha$ be a product of cw-injective modules
and fix a finitely presented module $N$. Each morphism $N\to M_\alpha$
factors through a weakly injective module and therefore through the
natural morphism $N\to N\ua^G$. Thus each morphism $N\to M$ factors
through $N\ua^G$, and it follows that $M$ is cw-injective.

(3) Let $M\to N$ be a pure monomorphism with $N$ cw-injective.
An application of Theorem~\ref{th:cw-injective} yields that
the composite $M\to N\to N\ua^G$ is a pure monomorphism; it factors
through the natural monomorphism $M\to M\ua^G$, so the latter is pure
as well. Another application of Theorem~\ref{th:cw-injective} shows
that $M$ is cw-injective.

The argument for a pure quotient module is analogous
\end{proof}

\section{Stable module categories}
\label{se:stable_category}

In this section we introduce the stable module category $\StMod (kG)$
and an appropriate variant which we denote by $\StMod^\cw (kG)$.  We
begin with a brief discussion of compactly generated triangulated
categories.

\subsection*{Compactly generated triangulated categories}

Let $\sfT$ be a triangulated category with suspension
$\Si\colon\sfT\xra{\sim}\sfT$. For a class $\sfC$ of objects we define
its \emph{perpendicular categories} to be the full subcategories
\begin{align*} \sfC^\perp&=\{Y\in\sfT\mid\Hom_\sfT(\Si^n X,Y)=0\text{
for all }X\in\sfC,n\in\mathbb Z\}\\
^\perp\sfC&=\{X\in\sfT\mid\Hom_\sfT(X,\Si^n Y)=0\text{ for all
}Y\in\sfC,n\in\mathbb Z\}.
\end{align*} We write $\Thick(\sfC)$ and $\Loc(\sfC)$ for the thick
subcategory and the localising subcategory, respectively, of $\sfT$
generated by $\sfC$.

Suppose that $\sfT$ has set-indexed coproducts. An object $X\in\sfT$
is \emph{compact} if the representable functor $\Hom_\sfT(X,-)$
preserves set-indexed coproducts. The category $\sfT$ is
\emph{compactly generated} if there is a set $\sfC$ of compact objects
such that $\sfT$ admits no proper localising subcategory containing
$\sfC$.

The following proposition provides a useful method to construct
compactly generated triangulated categories and to identify its
subcategory of compact objects. As usual, $\sfT^{\sfc}$ denotes the
full subcategory of compact objects in $\sfT$.

\begin{proposition}
\label{pr:compact} Let $\sfT$ be a triangulated category with
set-indexed coproducts, $\sfC$ a set of compact objects in $\sfT$, and
set $\sfV=\sfC^\perp$.  Then the composite
  \[ \Loc(\sfC) \rightarrowtail\sfT\twoheadrightarrow\sfT/\sfV
\] is an equivalence of triangulated categories.  Moreover, the
category $\sfT/\sfV$ is compactly generated and the equivalence
identifies $\Thick(\sfC)$ with $(\sfT/\sfV)^{\sfc}$.
\end{proposition}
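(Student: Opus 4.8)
The plan is to realise $\sfT\twoheadrightarrow\sfT/\sfV$ as a Bousfield localisation with acyclic objects $\sfV$ and local objects $\Loc(\sfC)$, and then read everything off from the resulting semiorthogonal decomposition. I would begin with two elementary observations. Since each $C\in\sfC$ is compact, the functor $\Hom_\sfT(\Si^{n}C,-)$ preserves coproducts, so $\sfV=\sfC^{\perp}$ is closed under coproducts and is therefore a \emph{localising} subcategory of $\sfT$; and for any object $V$ the class $\{X\mid\Hom_\sfT(\Si^{n}X,V)=0\ \text{for all }n\}$ is localising, whence $\sfC^{\perp}=\Loc(\sfC)^{\perp}$ and in particular $\Loc(\sfC)\subseteq{}^{\perp}\sfV$.

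The key step is the localisation triangle: for every $T\in\sfT$ a triangle $\Gamma T\to T\to LT\to\Si\Gamma T$ with $\Gamma T\in\Loc(\sfC)$ and $LT\in\sfV$. I would construct $\Gamma T$ by the standard cell-attachment (Adams resolution): set $T_{0}=0$ and, given $T_{i}\to T$, form $T_{i+1}$ from $T_{i}$ by attaching a copy of $\Si^{n}C$ for each morphism $\Si^{n}C\to\Cone(T_{i}\to T)$ with $C\in\sfC$ and $n\in\mathbb Z$, so that the induced map $T_{i+1}\to T$ kills all of these; then put $\Gamma T=\hocolim T_{i}$ with the induced map to $T$. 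By construction $\Gamma T\in\Loc(\sfC)$, while compactness of the objects of $\sfC$ forces any map $\Si^{n}C\to\Cone(\Gamma T\to T)$ to factor through a finite stage and hence vanish, so $LT:=\Cone(\Gamma T\to T)$ lies in $\sfV$. (Alternatively one may invoke the Bousfield localisation theorem for well generated triangulated categories.) From this triangle I extract: $\Loc(\sfC)\cap\sfV=0$, since for $X$ in the intersection the localising class $\{Y\mid\Hom_\sfT(\Si^{n}Y,X)=0\ \text{for all }n\}$ contains $\sfC$ and hence $X$, so $\id_{X}=0$; and $\Loc(\sfC)={}^{\perp}\sfV$, since for $X\in{}^{\perp}\sfV$ the map $X\to LX$ is zero, the triangle splits, thickness of $\Loc(\sfC)$ places $LX$ in $\Loc(\sfC)\cap\sfV=0$, and thus $X\cong\Gamma X\in\Loc(\sfC)$.

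With this in place the equivalence is the usual calculus-of-fractions argument. If $Y\in\Loc(\sfC)={}^{\perp}\sfV$ and $s\colon X'\to X$ is a morphism with $\Cone(s)\in\sfV$, then applying $\Hom_\sfT(-,Y)$ to the triangle on $s$, and using that $\sfV$ is closed under $\Si$ and $\Si^{-1}$, shows $s^{*}\colon\Hom_\sfT(X,Y)\to\Hom_\sfT(X',Y)$ is bijective; hence every fraction $X\xleftarrow{s}X'\to Y$ is equivalent to one with $s$ the identity, so $\Hom_{\sfT/\sfV}(X,Y)=\Hom_\sfT(X,Y)$ and the composite $\Loc(\sfC)\rightarrowtail\sfT\twoheadrightarrow\sfT/\sfV$ is fully faithful; it is essentially surjective because the localisation triangle exhibits each $T$ as isomorphic to $\Gamma T\in\Loc(\sfC)$ in $\sfT/\sfV$. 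Thus it is an equivalence. Since $\Loc(\sfC)\rightarrowtail\sfT$ and $\sfT\twoheadrightarrow\sfT/\sfV$ both preserve coproducts, this equivalence identifies coproducts, and hence compact objects, on the two sides; and because $\Loc(\sfC)$ is closed under coproducts in $\sfT$, each $C\in\sfC$ is compact in $\Loc(\sfC)$, which has no proper localising subcategory containing $\sfC$, so $\sfT/\sfV\simeq\Loc(\sfC)$ is compactly generated. Finally $\Thick(\sfC)\subseteq\Loc(\sfC)^{\sfc}$ is immediate (compact objects form a thick subcategory containing $\sfC$), and the reverse inclusion $\Loc(\sfC)^{\sfc}\subseteq\Thick(\sfC)$ is Neeman's theorem on the compacts of a compactly generated category generated by a set of compact objects; hence the equivalence carries $\Thick(\sfC)$ onto $(\sfT/\sfV)^{\sfc}$.

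The main obstacle is the two imported ingredients: the existence of the localisation triangle — the cell-attachment construction, which ultimately rests on Brown representability — and, more seriously, Neeman's identification $\Loc(\sfC)^{\sfc}=\Thick(\sfC)$, whose proof filters $\Loc(\sfC)$ by the subcategories built from $\sfC$ in finitely many steps and uses a subcoproduct argument to show that a compact object is already a retract of an object obtained by finitely many cones and finite coproducts. The remaining steps — the orthogonality bookkeeping and the fraction calculus — are routine.
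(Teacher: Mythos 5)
Your route --- building the localisation triangle by cell attachment, deducing $\Loc(\sfC)={}^{\perp}\sfV$, and then quoting Neeman's identification $\Loc(\sfC)^{\sfc}=\Thick(\sfC)$ --- is the honest unpacking of what the paper simply cites (Theorems~9.1.16 and 9.1.19 of Neeman's book, and Lemma~2.2 of his 1992 paper), and most of it is fine. But the full-faithfulness step is wrong as written: you fix $Y\in\Loc(\sfC)={}^{\perp}\sfV$ and claim that $s^{*}\colon\Hom_{\sfT}(X,Y)\to\Hom_{\sfT}(X',Y)$ is bijective whenever $\Cone(s)\in\sfV$. Applying $\Hom_{\sfT}(-,Y)$ to the triangle on $s$ needs the vanishing of $\Hom_{\sfT}(V,Y)$ for $V\in\sfV$, i.e.\ $Y\in\sfV^{\perp}$; but membership in ${}^{\perp}\sfV$ only kills maps \emph{out of} $Y$ into $\sfV$, and the semiorthogonal decomposition you have just constructed gives $\Hom(\Loc(\sfC),\sfV)=0$, not $\Hom(\sfV,\Loc(\sfC))=0$. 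Indeed, taking $X=0$ and $X'\in\sfV$, your claimed bijectivity amounts to $\Hom_{\sfT}(\sfV,\Loc(\sfC))=0$, which already fails for $\sfT=\sfD(\bbZ)$ (the derived category of abelian groups) and $\sfC=\{\bbZ/p\}$: there $\bbZ[1/p]$ lies in $\sfV$, the homotopy colimit $\bbZ[1/p]/\bbZ$ of the $\bbZ/p^{n}$ lies in $\Loc(\sfC)$, and the canonical surjection $\bbZ[1/p]\to\bbZ[1/p]/\bbZ$ is nonzero in $\sfD(\bbZ)$ (it is nonzero on $H^{0}$); up to shift it is the connecting map of the localisation triangle for $\bbZ$.

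The conclusion is still true, and the repair is the standard one, using the orthogonality on the \emph{source}: for $X\in{}^{\perp}\sfV$ and $s\colon X'\to X$ with $\Cone(s)\in\sfV$, the composite $X\to\Cone(s)$ vanishes, so $s$ is a split epimorphism; hence every roof $X\xleftarrow{s}X'\to Y$ is equivalent to a genuine morphism and $\Hom_{\sfT}(X,Y)\xra{\ \sim\ }\Hom_{\sfT/\sfV}(X,Y)$ for \emph{every} $Y$ (equivalently, argue with coroofs $X\to Z\xleftarrow{t}Y$). This is exactly the content of Neeman's Theorem~9.1.16 invoked in the paper. Two smaller points: the paper also records that $\sfT/\sfV$ exists, i.e.\ that its morphism classes are sets, which your write-up should address --- it follows because $\sfT/\Loc(\sfC)$ exists since $\sfC$ consists of compacts, or directly from your localisation triangle; and your phrase that a map $\Si^{n}C\to\Cone(\Gamma T\to T)$ ``factors through a finite stage and hence vanishes'' compresses the genuine argument, which is that $\Hom(\Si^{n}C,\Gamma T)\to\Hom(\Si^{n}C,T)$ is surjective after the first attachment and injective because a map $\Si^{n}C\to T_{i}$ dying in $T$ factors through the fibre of $T_{i}\to T$ and is coned off at stage $i+1$. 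With these repairs your proof is a correct, self-contained alternative to the paper's citation-based argument.
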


\begin{proof} 
  Set $\sfU=\Loc(\sfC)$. Observe that since $\sfC$
  consists of compact objects, the Verdier quotient $\sfT/\sfU$ does
  exist, in that the morphisms between a given pair of objects form a
  set; see \cite[Proposition~9.1.19]{Neeman:2001a}. It follows that
  the Verdier quotient $\sfT/\sfV$ exists, since $\sfU^\perp=\sfV$. Moreover,
  the composite $\sfU\rightarrowtail\sfT\twoheadrightarrow\sfT/\sfV$
  is an equivalence, by \cite[Theorem~9.1.16]{Neeman:2001a}, with the
  warning that there $\sfU^\perp$ is denoted $^\perp\sfU$.  The
  category $\sfU$ is compactly generated by construction.  The
  description of the compacts follows from standard properties of
  compact objects; see Lemma~2.2 in \cite{Neeman:1992b}.
\end{proof}

\begin{corollary}
\label{cor:compact} If $\sfT'\subseteq\sfT$ is a localising
subcategory containing $\Loc(\sfC)$, then the natural functor
\[ \sfT'/(\sfV\cap\sfT')\lto\sfT/\sfV
\] is an equivalence of triangulated categories.
\end{corollary}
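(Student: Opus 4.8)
The plan is to deduce this directly from Proposition~\ref{pr:compact} by applying it twice: once inside the ambient category $\sfT$ and once inside the localising subcategory $\sfT'$. The key observation is that the set $\sfC$ of compact objects of $\sfT$ sits inside $\sfT'$ by hypothesis (since $\Loc(\sfC)\subseteq\sfT'$), and—here is the first point to check—the objects of $\sfC$ remain compact when regarded as objects of $\sfT'$. This is because a localising subcategory is closed under set-indexed coproducts, so a coproduct of objects of $\sfT'$ computed in $\sfT'$ agrees with the one computed in $\sfT$; hence $\Hom_{\sfT'}(C,-)=\Hom_{\sfT}(C,-)|_{\sfT'}$ preserves coproducts for $C\in\sfC$. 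Thus Proposition~\ref{pr:compact} applies verbatim to the pair $(\sfT',\sfC)$.

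First I would record that, by Proposition~\ref{pr:compact} applied to $\sfT$, the composite $\Loc(\sfC)\rightarrowtail\sfT\twoheadrightarrow\sfT/\sfV$ is an equivalence, where $\sfV=\sfC^\perp$ computed in $\sfT$. Next, applying the same proposition to $\sfT'$, the composite $\Loc(\sfC)\rightarrowtail\sfT'\twoheadrightarrow\sfT'/\sfV'$ is an equivalence, where $\sfV'=\sfC^\perp\cap\sfT'$ is the perpendicular category computed inside $\sfT'$. But perpendicularity is tested by the same $\Hom$ groups whether one works in $\sfT$ or in $\sfT'$, so $\sfV'=\sfV\cap\sfT'$. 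Note also that $\Loc(\sfC)$ means the same localising subcategory whether formed in $\sfT$ or in $\sfT'$, again because $\sfT'$ is closed under the relevant operations (coproducts, cones, suspensions, summands) and contains $\sfC$.

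Now I would assemble the square. Both vertical composites $\Loc(\sfC)\to\sfT/\sfV$ and $\Loc(\sfC)\to\sfT'/(\sfV\cap\sfT')$ are equivalences, and the functor in question, $\sfT'/(\sfV\cap\sfT')\to\sfT/\sfV$, is the one induced by the inclusion $\sfT'\hookrightarrow\sfT$ (which carries $\sfV\cap\sfT'$ into $\sfV$, so it is well defined on the quotients). The triangle
\[
\xymatrix{
& \Loc(\sfC) \ar[dl] \ar[dr] & \\
\sfT'/(\sfV\cap\sfT') \ar[rr] & & \sfT/\sfV
}
\]
commutes up to natural isomorphism, since all three functors are restrictions or corestrictions of the identity on objects. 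Hence the bottom arrow, being a functor that sits between two equivalences in a commuting triangle, is itself an equivalence.

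The only genuine point requiring care—rather than an outright obstacle—is the verification that $\sfC$ consists of compact objects of $\sfT'$ and that forming $\Loc(\sfC)$, $\sfC^\perp$, and Verdier quotients inside $\sfT'$ is compatible with forming them inside $\sfT$; all of this rests on the closure of a localising subcategory under coproducts and the fact that a coproduct in $\sfT'$ is also a coproduct in $\sfT$. Once that is in place, the corollary is a formal consequence of Proposition~\ref{pr:compact}.
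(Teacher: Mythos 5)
Your argument is correct, and it reaches the conclusion by a route that differs from the paper's at the final step. You apply Proposition~\ref{pr:compact} twice --- once to $(\sfT,\sfC)$ and once to $(\sfT',\sfC)$, after the (necessary and correctly handled) checks that $\sfC$ stays compact in $\sfT'$, that $\Loc(\sfC)$ and $\sfC^{\perp}$ formed inside $\sfT'$ agree with $\Loc(\sfC)$ and $\sfV\cap\sfT'$ --- and then conclude by two-out-of-three in the strictly commuting triangle over $\Loc(\sfC)$, since the induced functor on quotients is compatible with the quotient functors by construction. The paper instead applies Proposition~\ref{pr:compact} to get that both quotients are compactly generated with compacts $\Thick(\sfC)$, and then quotes Lemma~\ref{le:equivalence}: the induced functor preserves coproducts and restricts to an equivalence on compact objects, hence is an equivalence. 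What the paper's route buys is brevity on the page and reuse of a lemma it wants anyway; what your route buys is self-containedness --- you never need the Keller-type Lemma~\ref{le:equivalence}, nor do you need to verify that the induced functor preserves coproducts, since an equivalence-after-composition argument does the work. The only point I would make explicit in a written version is the one you flag yourself: the existence of the Verdier quotient $\sfT'/(\sfV\cap\sfT')$ (Hom classes being sets) is part of what Proposition~\ref{pr:compact} supplies when applied to $(\sfT',\sfC)$, so your second application of that proposition is doing slightly more than providing an equivalence.
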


\begin{proof} This follows from Lemma~\ref{le:equivalence} below,
since the functor induces an equivalence between the categories of
compact objects.
\end{proof}

The following test for equivalence of triangulated categories is
implicit in \cite[\S4.2]{Keller:1994a}; for details see \cite[Lemma
4.5]{Benson/Iyengar/Krause:2011b}.

\begin{lemma}
\label{le:equivalence} Let $F\col\sfS \to \sfT$ be an exact functor
between compactly generated triangulated categories and suppose $F$
preserves set-indexed coproducts. If $F$ restricts to an equivalence
$\sfS^c\xrightarrow{\sim} \sfT^c$, then $F$ is an equivalence of
categories. \qed
\end{lemma}

\subsection*{The stable module category}

The following lemma describes the morphisms that are annihilated when
one passes to the stable module category; it is an immediate
consequence of Theorems~\ref{th:trace} and \ref{th:wp=wi}.

\begin{lemma}
\label{le:stable0} For a morphism $M \to N$ of
$kG$-modules, the following conditions are equivalent:
\begin{enumerate}
\item The morphism factors through some weakly projective module.
\item The morphism factors through the natural epimorphism
  $N{\ua^G}\to N$.
\item The morphism factors through some weakly injective module.
\item The morphism factors through the natural monomorphism $M \to
  M{\ua^G}$.\qed
\end{enumerate}
\end{lemma}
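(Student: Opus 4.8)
The plan is to obtain all four equivalences by bolting together Theorem~\ref{th:trace}, which already supplies the equivalence of conditions (2) and (4) (these are precisely its conditions (1) and (2), applied to the given morphism $\rho\colon M\to N$), with the characterisations of weakly projective and weakly injective modules from Theorem~\ref{th:wp=wi}. So the cycle to establish is $(1)\Lra(2)\Lra(4)\Lra(3)$, and the only genuinely new work is to bridge between ``factors through $\pi_N$'' and ``factors through some weakly projective module'', and dually between ``factors through $\iota_M$'' and ``factors through some weakly injective module''.

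For $(2)\Ra(1)$: the module $N{\ua^G}$ is weakly projective by Theorem~\ref{th:wp=wi}, so a factorisation through $\pi_N\colon N{\ua^G}\to N$ is in particular a factorisation through a weakly projective module. For $(1)\Ra(2)$: suppose $\rho=\beta\alpha$ with $\alpha\colon M\to P$ and $\beta\colon P\to N$, where $P$ is weakly projective. Apply the lifting property defining weak projectivity of $P$ to the $k$-split exact sequence $0\to\Ker\pi_N\to N{\ua^G}\xra{\pi_N}N\to 0$ and the morphism $\beta$; this yields $\beta'\colon P\to N{\ua^G}$ with $\pi_N\beta'=\beta$, whence $\rho=\pi_N(\beta'\alpha)$ factors through $\pi_N$.

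The equivalence $(3)\Lra(4)$ is dual. First $M{\ua^G}$ is weakly injective by Theorem~\ref{th:wp=wi}, giving $(4)\Ra(3)$. Conversely, if $\rho=\beta\alpha$ with $\alpha\colon M\to P$, $\beta\colon P\to N$ and $P$ weakly injective, apply the extension property defining weak injectivity of $P$ to the $k$-split exact sequence $0\to M\xra{\iota_M}M{\ua^G}\to\Coker\iota_M\to 0$ and the morphism $\alpha$, obtaining $\alpha'\colon M{\ua^G}\to P$ with $\alpha'\iota_M=\alpha$; then $\rho=(\beta\alpha')\iota_M$ factors through $\iota_M$. Combining these with $(2)\Lra(4)$ from Theorem~\ref{th:trace} closes the cycle.

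There is no real obstacle; the single point needing attention, at each of the two places it is invoked, is to recall that the sequences $0\to\Ker\pi_N\to N{\ua^G}\to N\to 0$ and $0\to M\to M{\ua^G}\to\Coker\iota_M\to 0$ are $k$-split exact (as recorded right after the definition of weak projectivity and injectivity), so that the lifting and extension properties of the weakly projective, respectively weakly injective, module $P$ genuinely apply.
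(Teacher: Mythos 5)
Your proof is correct and follows the paper's intended route: the paper records this lemma as an immediate consequence of Theorems~\ref{th:trace} and \ref{th:wp=wi}, and your argument is precisely a careful spelling-out of that, using Theorem~\ref{th:trace} for (2)$\Leftrightarrow$(4) and the weak projectivity/injectivity of $N{\ua^G}$ and $M{\ua^G}$ together with the lifting/extension properties along the $k$-split sequences for the remaining implications.
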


The \emph{stable module category} $\StMod (kG)$ has the same objects as
$\Mod (kG)$, but the morphisms are given by
\[ \sHom_{kG}(M,N)=\Hom_{kG}(M,N)/\PHom_{kG}(M,N)
\] where $\PHom_{kG}(M,N)$ is the $k$-submodule of $\Hom_{kG}(M,N)$
consisting of morphisms $M \to N$ satisfying the equivalent conditions
of Lemma~\ref{le:stable0}.  As $\Mod (kG)$ is a Frobenius category,
$\StMod (kG)$ is a triangulated category; see
\cite[Section~I.2]{Happel:1988a}. The exact triangles in it are
induced by the $k$-split exact sequences of $kG$-modules.

Let $\stmod (kG)$ be the full subcategory of $\StMod (kG)$ consisting of
objects isomorphic to finitely presented $kG$-modules. This is a thick
subcategory consisting of compact objects.  The result below is an
immediate consequence of Lemma \ref{le:cw-perp}.

\begin{lemma}
\label{le:perp} In $\StMod (kG)$, the perpendicular category $(\stmod
kG)^\perp$ coincides with the localising subcategory whose objects are
the cw-injective $kG$-modules.\qed
\end{lemma}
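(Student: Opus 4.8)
The plan is to reduce both descriptions of the subcategory to a statement about module homomorphisms and then quote Lemma~\ref{le:cw-perp}(1). First I would record that $\stmod(kG)$ is a thick subcategory of $\StMod(kG)$, in particular closed under the suspension $\Si$ and its inverse, and that it is essentially small. Consequently $(\stmod kG)^\perp$ equals $\sfC^\perp$ for a set $\sfC$ of representatives, and is therefore automatically a localising subcategory of $\StMod(kG)$; moreover, for a $kG$-module $M$ the condition $M\in(\stmod kG)^\perp$ is equivalent to the single vanishing $\sHom_{kG}(N,M)=0$ for every finitely presented $kG$-module $N$, the suspensions $\Si^nN$ contributing nothing new because each of them again lies in $\stmod(kG)$.

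Next I would translate the vanishing. By construction of $\StMod(kG)$, the equality $\sHom_{kG}(N,M)=0$ says exactly that every $kG$-module homomorphism $N\to M$ lies in $\PHom_{kG}(N,M)$, i.e.\ factors through a weakly projective — equivalently, by Lemma~\ref{le:stable0}, a weakly injective — $kG$-module. Hence $M\in(\stmod kG)^\perp$ if and only if every morphism $N\to M$ with $N$ finitely presented factors through a weakly injective module. By Lemma~\ref{le:cw-perp}(1), this is precisely the assertion that $M$ is cw-injective.

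Putting these together identifies the objects of $(\stmod kG)^\perp$ with the cw-injective $kG$-modules, and since $(\stmod kG)^\perp$ is a localising subcategory of $\StMod(kG)$, this is exactly what is meant by saying it coincides with the localising subcategory whose objects are the cw-injective modules. There is no real obstacle here beyond this bookkeeping; the one point worth stating carefully is the reduction in the first paragraph — that testing against all suspensions of all finitely presented modules amounts to a single Hom-vanishing — since this is what licenses passing from the triangulated perpendicular condition to a statement purely about $kG$-linear maps, the setting in which Lemma~\ref{le:cw-perp} applies.
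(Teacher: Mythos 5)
Your argument is correct and is essentially the paper's own: the paper treats the lemma as an immediate consequence of Lemma~\ref{le:cw-perp}, via the translation of stable Hom-vanishing through Lemma~\ref{le:stable0}, exactly as you do. One small adjustment in the bookkeeping: closure of $(\stmod kG)^\perp$ under coproducts comes from the fact that the objects of $\stmod (kG)$ are \emph{compact} in $\StMod (kG)$ (as stated just before the lemma), not merely from $\stmod (kG)$ being essentially small.
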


Set $\sfV=(\stmod kG)^\perp$ and modify the stable module category by
defining
\[\StMod^\cw (kG)=\StMod (kG)/\sfV.\] This Verdier quotient exists by
Proposition~\ref{pr:compact}, since $\stmod (kG)$ consists of compact
objects.  Given Lemma~\ref{le:perp}, the following result is a special
case of Proposition~\ref{pr:compact}.

\begin{theorem}
\label{th:stmodcw} The composite functor
\[ \Loc(\stmod kG) \rightarrowtail\StMod (kG) \twoheadrightarrow
\StMod^\cw (kG)
\] is an equivalence of triangulated categories. The category
$\StMod^\cw (kG)$ is compactly generated and the equivalence identifies
$\stmod (kG)$ with the full subcategory consisting of all compacts
objects of $\StMod^\cw (kG)$. \qed
\end{theorem}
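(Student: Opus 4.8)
The plan is to invoke Proposition~\ref{pr:compact} with $\sfT = \StMod(kG)$ and $\sfC$ a (skeletally small) set of objects generating $\stmod(kG)$ as a thick subcategory. First I would check the hypotheses of that proposition: that $\StMod(kG)$ has set-indexed coproducts, and that the finitely presented modules are compact in it. Coproducts exist because $\Mod(kG)$ has coproducts and the class $\PHom$ is closed under the relevant operations, so coproducts descend to the stable category; and a finitely presented $kG$-module $N$ is compact in $\StMod(kG)$ because $\Hom_{kG}(N,-)$ commutes with filtered colimits and $\PHom_{kG}(N,-)$ does as well (every map out of a finitely presented module into a colimit factors through a stage, and likewise for maps factoring through $N\ua^G$, which is again finitely presented). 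Choosing a representative set $\sfC$ of isomorphism classes in $\stmod(kG)$ is legitimate since $kG$ is Noetherian enough that finitely presented modules form a set up to isomorphism (more simply, $\mod(kG)$ is skeletally small).

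Next I would identify $\sfC^\perp$ with $\sfV$. By definition $\sfV = (\stmod kG)^\perp$, and since $\stmod(kG) = \Thick(\sfC)$ one has $\sfC^\perp = \Thick(\sfC)^\perp = (\stmod kG)^\perp = \sfV$; here I use that the perpendicular category only depends on the thick (indeed localising) subcategory generated by the class, because $\Hom_\sfT(\Si^n(-), Y)$ is a cohomological functor. With this in hand, Proposition~\ref{pr:compact} applies verbatim: the composite $\Loc(\sfC) \rightarrowtail \StMod(kG) \twoheadrightarrow \StMod(kG)/\sfV$ is an equivalence, the quotient $\StMod^\cw(kG) = \StMod(kG)/\sfV$ is compactly generated, and the equivalence carries $\Thick(\sfC) = \stmod(kG)$ onto the subcategory of compact objects of $\StMod^\cw(kG)$. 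Since $\Loc(\sfC) = \Loc(\stmod kG)$, this is exactly the asserted statement.

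The only genuine point requiring care — the main obstacle, such as it is — is the compactness of finitely presented $kG$-modules in $\StMod(kG)$, because compactness is about coproducts and a priori passing to the stable quotient could enlarge $\Hom$-groups in a way that breaks the colimit interchange. The resolution is that for finitely presented $N$, both $\Hom_{kG}(N,-)$ and its subfunctor $\PHom_{kG}(N,-)$ preserve filtered colimits: the former is standard, and for the latter one uses Lemma~\ref{le:stable0}(2), namely that $\PHom_{kG}(N,M)$ is the image of $\Hom_{kG}(N, M\ua^G) \to \Hom_{kG}(N,M)$, together with the facts that $(-)\ua^G = kG\otimes_k(-)$ commutes with colimits and $N\ua^G$ is again finitely presented. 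Hence the quotient functor $\Hom_{kG}(N,-) \twoheadrightarrow \sHom_{kG}(N,-)$ exhibits $\sHom_{kG}(N,-)$ as a filtered-colimit-preserving functor, so $N$ is compact. Everything else is a direct citation of Proposition~\ref{pr:compact}, so the proof is genuinely a ``special case'' as the text claims, and I would present it that way: state that the hypotheses of Proposition~\ref{pr:compact} hold (with the compactness verification as above and the identification $\sfC^\perp = \sfV$ via Lemma~\ref{le:perp}), and then read off the three conclusions.
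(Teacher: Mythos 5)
Your proposal is correct and follows the paper's own route: the theorem is obtained as a direct application of Proposition~\ref{pr:compact} with $\sfT=\StMod (kG)$, $\sfC$ a skeleton of $\stmod (kG)$, and $\sfV=\sfC^\perp=(\stmod kG)^\perp$, the paper having already noted that $\stmod (kG)$ is a thick subcategory of compact objects. Your explicit verification that coproducts descend to $\StMod (kG)$ and that $\sHom_{kG}(N,-)$ preserves them for finitely presented $N$ (via $\PHom_{kG}(N,-)$ being the image of $\Hom_{kG}(N,(-){\ua^G})\to\Hom_{kG}(N,-)$) simply fills in details the paper leaves implicit.
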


For later use, we record a remark about functors between exact
categories.

\begin{lemma}
\label{le:exact-functor} Let $F\colon \sfC\to \sfD$ be a functor
between exact categories. If $F$ admits a right adjoint that is exact,
then $F$ preserves projectivity; if it admits a left adjoint that is
exact, then $F$ preserves injectivity.
\end{lemma}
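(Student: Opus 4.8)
The statement is a formal consequence of the adjunction, and the plan is to prove the first assertion (preservation of projectivity); the second follows by the dual argument, or simply by applying the first to the opposite categories. Let $F\colon\sfC\to\sfD$ have an exact right adjoint $G\colon\sfD\to\sfC$, and let $P$ be a projective object of $\sfC$. I want to show that $FP$ is projective in $\sfD$, that is, that $\Hom_{\sfD}(FP,-)$ carries admissible epimorphisms of $\sfD$ to surjections of abelian groups.

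First I would write down the adjunction isomorphism: for every object $Y$ of $\sfD$ there is a natural isomorphism $\Hom_{\sfD}(FP,Y)\cong\Hom_{\sfC}(P,GY)$. Now let $\pi\colon Y\twoheadrightarrow Y''$ be an admissible epimorphism in $\sfD$. Since $G$ is exact by hypothesis, $G\pi\colon GY\to GY''$ is an admissible epimorphism in $\sfC$. Because $P$ is projective in $\sfC$, the induced map $\Hom_{\sfC}(P,GY)\to\Hom_{\sfC}(P,GY'')$ is surjective. Transporting along the natural isomorphism above, the map $\Hom_{\sfD}(FP,Y)\to\Hom_{\sfD}(FP,Y'')$ is surjective as well; concretely, given $g\colon FP\to Y''$, its adjunct $\widetilde g\colon P\to GY''$ lifts through $G\pi$ to some $h\colon P\to GY$, and the adjunct of $h$ is a lift of $g$ through $\pi$. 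Hence $FP$ is projective.

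There is really no obstacle here beyond bookkeeping; the only point requiring a moment's care is the interplay between the \emph{exact} structures and the adjunction, namely checking that ``exact functor'' is being used in the sense that admissible exact sequences (equivalently admissible epimorphisms) are preserved, so that $G\pi$ is again an admissible epimorphism and the projectivity of $P$ applies to it. With that understood, the naturality square for the adjunction does all the work. For the injectivity statement, one runs the same argument with the roles of source and target reversed: if $F$ has an exact left adjoint $E$, then for an injective object $I$ of $\sfC$ the isomorphism $\Hom_{\sfD}(-,FI)\cong\Hom_{\sfC}(E(-),I)$ together with exactness of $E$ and injectivity of $I$ shows that $\Hom_{\sfD}(-,FI)$ sends admissible monomorphisms to surjections, so $FI$ is injective.
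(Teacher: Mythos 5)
Your argument is correct and is essentially the paper's own proof: both use the adjunction isomorphism $\Hom_{\sfD}(FP,-)\cong\Hom_{\sfC}(P,G(-))$ together with the fact that the exact right adjoint preserves admissible epimorphisms, and handle the injective case dually. Your version just spells out the lifting step and the care about admissible epimorphisms more explicitly.
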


\begin{proof} Suppose $F$ has a right adjoint, say $F'$, that is
exact. For any projective object $X$ of $\sfC$, it follows from the
isomorphism
\[ \Hom_\sfD(F(X),-)\cong\Hom_\sfC(X,F'(-))
\] that $\Hom_{\sfD}(F(X),-)$ maps an epimorphism in $\sfD$ to a
surjective map, that is to say, that $F(X)$ is a projective object of
$\sfD$.

The argument for the other case is exactly analogous.
\end{proof}

\subsection*{The tensor product}

The category of $kG$-modules carries a tensor product; it is the
tensor product over $k$ with diagonal $G$-action. Note that this
tensor product is exact in each variable with respect to the $k$-split
exact structure.

\begin{lemma} 
\label{lem:tensor}
Let $M,N$ be $kG$-modules.
\begin{enumerate}
\item If $M$ is weakly injective, then $M\otimes_kN$ is weakly
injective.
\item If $M$ is cw-injective, then $M\otimes_kN$ is cw-injective.
\end{enumerate}
\end{lemma}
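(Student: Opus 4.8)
The plan is to reduce both statements to the trace-map characterisation of weak injectivity (Theorem~\ref{th:wp=wi}) together with the fact, noted just before the lemma, that $-\otimes_k N$ is exact with respect to the $k$-split exact structure. For part (1), suppose $M$ is weakly injective. By Higman's criterion there is $\theta\in\End_k(M)$ with $\Tr_G(\theta)=\id_M$. I would then consider $\theta\otimes_k\id_N\in\End_k(M\otimes_k N)$, where the tensor is over $k$ so this is a well-defined $k$-linear (though not $kG$-linear) endomorphism. The key computation is that $\Tr_G(\theta\otimes_k\id_N)=\Tr_G(\theta)\otimes_k\id_N=\id_{M\otimes_k N}$; this follows by unwinding the definition of $\Tr_G$ and using that the diagonal $G$-action on $M\otimes_k N$ sends $g(m\otimes n)=gm\otimes gn$, so that $\sum_{g\in G} g\bigl(\theta(g^{-1}m)\otimes g^{-1}n\bigr)=\bigl(\sum_{g\in G} g\theta(g^{-1}m)\bigr)\otimes n$ — wait, that is not quite right since $g$ acts on both factors; more carefully one checks $\sum_g g((\theta\otimes\id)(g^{-1}(m\otimes n)))=\sum_g g(\theta(g^{-1}m)\otimes g^{-1}n)=\sum_g g\theta(g^{-1}m)\otimes n=\Tr_G(\theta)(m)\otimes n$. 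So $\Tr_G(\theta\otimes_k\id_N)=\id_{M\otimes_k N}$, and Theorem~\ref{th:wp=wi}(5) gives that $M\otimes_k N$ is weakly injective.

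For part (2), suppose $M$ is cw-injective. The cleanest route is via Theorem~\ref{th:cw-injective}: $M$ cw-injective means $\iota_M\colon M\to M\ua^G$ is a pure monomorphism. I would first observe that tensoring a pure monomorphism with any $N$ over $k$ yields a pure monomorphism — this holds because a pure exact sequence of $kG$-modules is a filtered colimit of split exact sequences (Definition~\ref{defn:purity}), $-\otimes_k N$ preserves filtered colimits and split exact sequences, and a filtered colimit of split exact sequences is pure exact. Hence $\iota_M\otimes_k\id_N\colon M\otimes_k N\to (M\ua^G)\otimes_k N$ is a pure monomorphism. Next I would identify the target: there is a natural isomorphism of $kG$-modules $(M\ua^G)\otimes_k N\cong (M\otimes_k N)\ua^G$ (both are $kG\otimes_k M\otimes_k N$ with the appropriate action; this is a standard "projection formula" for the induction functor), and under this isomorphism $\iota_M\otimes_k\id_N$ corresponds to $\iota_{M\otimes_k N}$ — this last point needs a small check using the explicit formula $\iota_M(m)=\sum_g g\otimes g^{-1}m$. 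Granting that, $\iota_{M\otimes_k N}$ is a pure monomorphism, so Theorem~\ref{th:cw-injective}(3) gives that $M\otimes_k N$ is cw-injective.

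Alternatively, part (2) can be deduced directly from part (1) via Lemma~\ref{le:cw-perp}: given a finitely presented $P$ and a morphism $P\to M\otimes_k N$, one wants it to factor through a weakly injective module; writing $M=\colim M_\alpha$ with each $M_\alpha$ weakly injective, one has $M\otimes_k N=\colim(M_\alpha\otimes_k N)$ since $-\otimes_k N$ commutes with filtered colimits, each $M_\alpha\otimes_k N$ is weakly injective by part (1), and the morphism from the finitely presented $P$ factors through one of the structure maps. This is arguably the shortest argument and I would likely present it this way.

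The main obstacle I anticipate is not conceptual but bookkeeping: getting the $G$-actions straight in the trace computation for part (1), and in part (2) correctly matching $(M\ua^G)\otimes_k N$ with $(M\otimes_k N)\ua^G$ compatibly with the canonical maps $\iota$ (or $\pi$). The route through Lemma~\ref{le:cw-perp} and part (1) sidesteps the second issue entirely, so the only real calculation is the identity $\Tr_G(\theta\otimes_k\id_N)=\Tr_G(\theta)\otimes_k\id_N$, which is where I would focus the written proof.
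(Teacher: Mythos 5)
Your proof is correct, but for part (1) it takes a different route from the paper. The paper argues softly: $-\otimes_k N$ has the right adjoint $\Hom_k(N,-)$, which is exact for the $k$-split structure, so by Lemma~\ref{le:exact-functor} the functor $-\otimes_k N$ preserves weak projectivity, which coincides with weak injectivity by Theorem~\ref{th:wp=wi}; no trace computation appears. You instead verify Higman's criterion directly via $\Tr_G(\theta\otimes_k\id_N)=\Tr_G(\theta)\otimes_k\id_N$, and your diagonal-action bookkeeping ($g\theta(g^{-1}m)\otimes gg^{-1}n = g\theta(g^{-1}m)\otimes n$) is right; this is more computational but makes the mechanism explicit, whereas the paper's adjunction argument is shorter and reuses a lemma it needs elsewhere (for base change). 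For part (2) your preferred argument is essentially the paper's: $-\otimes_k N$ preserves filtered colimits, so a filtered colimit of weakly injectives is sent to a filtered colimit of modules that are weakly injective by (1) --- in fact you do not even need Lemma~\ref{le:cw-perp} here, since $M\otimes_k N\cong\colim(M_\alpha\otimes_k N)$ is cw-injective directly from the definition. Your alternative purity route via Theorem~\ref{th:cw-injective} and the identification $(M{\ua^G})\otimes_k N\cong (M\otimes_k N){\ua^G}$ also works (the untwisting isomorphism $(g\otimes m)\otimes n\mapsto g\otimes(m\otimes g^{-1}n)$ does match $\iota_M\otimes\id_N$ with $\iota_{M\otimes_k N}$), but it is more work than either of the other arguments.
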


\begin{proof} The functor $-\otimes_kN$ has $\Hom_k(N,-)$ as a right
adjoint that is exact with respect to the $k$-split exact
structure. Thus, Lemma~\ref{le:exact-functor} implies that
$-\otimes_kN$ preserves weak projectivity, which coincides with weak
injectivity. The functor $-\otimes_kN$ preserves filtered colimits,
and therefore cw-injectivity as well.
\end{proof}

It follows from the lemma that the tensor product on $\Mod (kG)$ passes
to a tensor product on $\StMod (kG)$; this is compatible with the
triangulated structure, so that $\stmod (kG)$ and $\StMod (kG)$ are
tensor triangulated categories. The cw-injective $kG$-modules form a
tensor ideal in $\StMod (kG)$ and hence $\StMod^\cw (kG)$ inherits a
tensor triangulated structure.

\section{Base change}
\label{se:base_change} In this section we prove a number of results
that track the behaviour of the stable module category
$\StMod^\cw (kG)$ under changes of the coefficient ring $k$. We remind
the reader that $\Mod (kG)$ is the category of $kG$-modules with exact
structure defined by the $k$-split short exact sequences.

Let $\phi\colon k\to k'$ be a homomorphism of commutative
rings. Assigning the element $\sum \alpha_g g$ to $\sum \phi(\alpha_g)
g$ is then a homomorphism of rings $kG\to k'G$.  We consider the
corresponding \emph{restriction functor} $\Mod (k'G)\to\Mod (kG)$ and the
\emph{base change functor} $k'\otimes_k-\colon\Mod (kG)\to\Mod (k'G)$.

\subsection*{Exact structure} Base change and restriction are
compatible with the exact structure for modules over $kG$ and $k'G$
respectively.

\begin{lemma}
\label{le:base-change-exact} Let $\phi\colon k\to k'$ be a ring
homomorphism.
\begin{enumerate}
\item Restriction takes $k'$-split exact sequences of $k'G$-modules to
$k$-split exact sequences of $kG$-modules.
\item Base change takes $k$-split exact sequences of $kG$-modules to
$k'$-split exact sequences of $k'G$-modules.
\item Restriction and base change preserve weak injectivity and
cw-injectivity.
\end{enumerate}
\end{lemma}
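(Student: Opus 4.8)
The plan is to treat the three parts in order, with (1) and (2) serving as the foundation for (3). For part (1), I would start from a $k'$-split exact sequence $0 \to L \to M \to N \to 0$ of $k'G$-modules; by definition this is split exact over $k'$, meaning there is a $k'$-linear section $N \to M$. Since restriction of scalars along $\phi\colon k \to k'$ is exact and takes $k'$-linear maps to $k$-linear maps, the same section witnesses that the restricted sequence is split exact over $k$, hence $k$-split exact as a sequence of $kG$-modules. This step is essentially a tautology once one unwinds the definition of the $k$-split exact structure.

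For part (2), I would apply the base change functor $k' \otimes_k -$ to a $k$-split exact sequence $0 \to L \to M \to N \to 0$ of $kG$-modules. Restricting to the trivial subgroup, this is a split exact sequence of $k$-modules, so it remains exact after applying $k' \otimes_k -$ over $k$ (a split exact sequence stays split exact under any additive functor), and the resulting sequence $0 \to k'\otimes_k L \to k'\otimes_k M \to k'\otimes_k N \to 0$ of $k'$-modules is split. Hence, viewed as a sequence of $k'G$-modules, it is $k'$-split exact. The only mild subtlety is bookkeeping: one should note that the $k'G$-module structure on $k' \otimes_k M$ is the diagonal one, and that $(k' \otimes_k M)$ restricted to the trivial subgroup is just $k' \otimes_k (M{\restriction})$, so the $k'$-splitting over the trivial subgroup is what one gets by applying $k' \otimes_k -$ to the $k$-splitting.

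For part (3), I would invoke the characterisation of weak injectivity via Higman's criterion (Theorem \ref{th:wp=wi}, condition (5)) together with the criterion for cw-injectivity (Theorem \ref{th:cw-injective}) and Lemma \ref{le:exact-functor}. The cleanest route is to observe that both restriction and base change commute, up to natural isomorphism, with the functor $-{\ua^G}$: for base change one has $k' \otimes_k (M{\ua^G}) \cong (k' \otimes_k M){\ua^G}$ since $kG \otimes_k M$ pulls through the tensor product, and a similar identity holds for restriction since $\ua^G$ is given by $kG \otimes_k -$ on the underlying $k$-module (with the $k$-module structure unchanged by restriction). Given this, if $P$ satisfies $P \mid M{\ua^G}$ for some module $M$, then its image under either functor is a direct summand of the image of $M{\ua^G}$, which is again of the form $(-){\ua^G}$; so weak injectivity is preserved by condition (6) of Theorem \ref{th:wp=wi}. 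For cw-injectivity, I would use that $\iota_M$ is carried to $\iota$ of the image (by the same compatibility with $\ua^G$) and that restriction and base change both preserve purity — base change because it preserves filtered colimits and split exact sequences, hence pure exact sequences; restriction because purity over $k'G$ is tested on finitely presented modules and one can argue via the filtered-colimit-of-split-exact-sequences description — so Theorem \ref{th:cw-injective}(3) transfers. Alternatively, and perhaps more slickly, cw-injectivity passes through because a cw-injective module is a filtered colimit of weakly injectives and both functors preserve filtered colimits (base change obviously; restriction because it is just a forgetful functor). The main obstacle I anticipate is the purity claim for the \emph{restriction} functor in the cw-injective case, since restriction does not obviously preserve the class of finitely presented modules in the right direction; here the filtered-colimit description of cw-injectivity (Corollary \ref{co:cw-injective} and Lemma \ref{le:cw-perp}) is the safer tool, and I would lean on that rather than on purity directly.
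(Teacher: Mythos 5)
Your proof is correct. Parts (1) and (2), which the paper dismisses as clear, are argued exactly as one should: a $k'$-splitting restricts to a $k$-splitting, and a $k$-splitting is carried along by $k'\otimes_k-$, with the observation that base change commutes with restriction to the trivial subgroup. For part (3) your route differs from the paper's in the weak-injectivity step: the paper invokes Lemma~\ref{le:exact-functor}, using that base change and restriction form an adjoint pair of functors that are exact for the split structures (this is where (1) and (2) enter), together with the coincidence of weak projectivity and weak injectivity; you instead check directly that both functors commute with the induction functor $(-)\ua^G$ and then apply condition (6) of Theorem~\ref{th:wp=wi} (direct summands of induced modules). Both arguments are valid; the paper's is more formal and reuses machinery already in place, while yours is more explicit and has the side benefit of identifying $\iota$ and $\pi$ under base change and restriction, which would also make the purity criterion of Theorem~\ref{th:cw-injective} usable. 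Your worry about purity and restriction is unfounded, by the way: both functors preserve filtered colimits and split exact sequences, hence pure exact sequences, so that route would also have closed. For cw-injectivity your ``slicker'' alternative --- filtered colimits of weakly injectives are sent to filtered colimits of weakly injectives since both functors preserve filtered colimits --- is precisely the paper's argument.
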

\begin{proof} Parts (1) and (2) are clear. As to (3), it follows from
Lemma~\ref{le:exact-functor} that restriction and base change
preserve weak injectivity, because the former is right adjoint to the
latter, and they are both exact functors; one uses again the property
that weak injectivity and weak projectivity coincide. Since both
functors also preserve filtered colimits, they preserve cw-injectivity
as well.
  \end{proof}

\subsection*{Localisation at $|G|$} Recall that for any $kG$-module
$M$ and  integer $n$ we write $M_{(n)}$ for the localisation
of $M$ inverting all integers coprime to $n$.

\begin{theorem}
\label{th:local} For any integer $n$ that is divisible by $|G|$ the
localisation functor
\[ (-)_{(n)} \colon \StMod^\cw (kG) \lto\StMod^\cw(k_{(n)}G)
\] is an equivalence of triangulated categories, and induces an
equivalence
\[ (-)_{(n)} \colon \stmod (kG) \lto \stmod(k_{(n)}G).
\]
\end{theorem}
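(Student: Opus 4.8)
The plan is to build the inverse functor explicitly and to exploit the compact generation of $\StMod^\cw$ established in Theorem~\ref{th:stmodcw}, together with the criterion of Lemma~\ref{le:equivalence}. Write $R = k_{(n)}$ and $\psi\colon k\to R$ for the localisation map. Base change $R\otimes_k - = (-)_{(n)}$ is an exact functor $\Mod(kG)\to\Mod(RG)$ that sends $k$-split exact sequences to $R$-split exact sequences and preserves weak injectivity and cw-injectivity, by Lemma~\ref{le:base-change-exact}; hence it descends to an exact functor $\StMod(kG)\to\StMod(RG)$, and since it preserves filtered colimits it also carries $\sfV=(\stmod kG)^\perp$ into $(\stmod RG)^\perp$, so it descends further to an exact, coproduct-preserving functor $(-)_{(n)}\colon\StMod^\cw(kG)\to\StMod^\cw(RG)$. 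By Lemma~\ref{le:equivalence} it suffices to show this functor restricts to an equivalence on compact objects, i.e.\ an equivalence $\stmod(kG)\xrightarrow{\sim}\stmod(RG)$; once that is done the first assertion follows from the second. So both claims reduce to the statement about finitely presented modules.

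For the equivalence on $\stmod$, the key input is the computation already performed in the proof of Theorem~\ref{th:Y}: since $|G|$ divides $n$, Higman's criterion gives $\Tr_G(\tfrac{n}{|G|}\id_M)=n\,\id_M$, so multiplication by $n$ on any $kG$-module $M$ factors through the weakly injective module $M\ua^G$, hence is zero in $\StMod(kG)$. Consequently every integer $r$ with $(r,n)=1$ acts invertibly on $\StMod(kG)$: choosing $a,b$ with $na+rb=1$ shows $r\cdot a=\id$ in $\StMod(kG)$. Therefore every object of $\StMod(kG)$, and in particular every finitely presented $kG$-module, is already a module over $R=k_{(n)}$ up to stable isomorphism: the canonical map $M\to M_{(n)}$ is an isomorphism in $\StMod(kG)$, being the filtered colimit of the isomorphisms $M\xrightarrow{r}M$. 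This shows the base-change functor on $\stmod$ is essentially surjective (any finitely presented $RG$-module $N$ restricts to a finitely presented $kG$-module whose image is $N_{(n)}\cong N$) and full; for faithfulness one uses that for $M$ finitely presented over $kG$ the natural map $\sHom_{kG}(M,N)\to\sHom_{RG}(M_{(n)},N_{(n)})$ is, after identifying $N\cong N_{(n)}$, the localisation map on a $k$-module, which is bijective because $\sHom_{kG}(M,N)$ is already an $R$-module (the $r$ act invertibly) and finitely presented $M$ commutes with the colimit defining $N_{(n)}$.

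The step I expect to be the main obstacle is verifying faithfulness cleanly, i.e.\ controlling the stable Hom groups under localisation: one must check that $\PHom_{kG}(M,N)$ localises correctly, so that the quotient $\sHom$ commutes with $(-)_{(n)}$ for $M$ finitely presented. The natural argument is that $\PHom_{kG}(M,N)=\Im\bigl(\Hom_{kG}(M,N\ua^G)\to\Hom_{kG}(M,N)\bigr)$ by Lemma~\ref{le:stable0}, and both $\Hom_{kG}(M,-)$ and the induction functor $(-)\ua^G$ commute with the filtered colimit $N_{(n)}=\colim_{(r,n)=1}(N\xrightarrow{r}N)$ since $M$ is finitely presented; localisation is exact, so it carries this image description to the corresponding one over $RG$, giving $\PHom_{kG}(M,N)_{(n)}\cong\PHom_{RG}(M_{(n)},N_{(n)})$ and hence $\sHom_{kG}(M,N)_{(n)}\cong\sHom_{RG}(M_{(n)},N_{(n)})$. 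Combined with the fact that $\sHom_{kG}(M,N)$ is already $n$-local, this yields the required bijection, completing the proof.
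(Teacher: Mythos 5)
Your overall strategy (reduce to compacts via Lemma~\ref{le:equivalence} and compare stable Hom groups under localisation) is viable and genuinely different from the paper's proof, which instead exhibits the restriction functor $F\colon\StMod^\cw(k_{(n)}G)\to\StMod^\cw(kG)$ as an explicit quasi-inverse: one composite is the identity, and the natural map $M\to F(M_{(n)})$ is an isomorphism in $\StMod^\cw(kG)$ because its cone is $\varGamma_nM$, which is cw-injective by Theorem~\ref{th:Y}; the statement about $\stmod$ is then deduced by restricting to compact objects (Theorem~\ref{th:stmodcw}). Your computation $\sHom_{kG}(M,N)\cong\sHom_{k_{(n)}G}(M_{(n)},N_{(n)})$ for finitely presented $M$ --- via Lemma~\ref{le:stable0}, the fact that $\Hom_{kG}(M,-)$ commutes with the defining filtered colimit, and the observation that $n$ annihilates stable Hom groups --- is sound and does give full faithfulness on compacts.

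There is, however, a genuine error in the middle of your argument: the claim that ``the canonical map $M\to M_{(n)}$ is an isomorphism in $\StMod(kG)$, being the filtered colimit of the isomorphisms $M\xrightarrow{r}M$'' is false. A filtered colimit of stable isomorphisms need not be a stable isomorphism, because weakly injective modules are not closed under filtered colimits; the cone of $M\to M_{(n)}$ in $\StMod(kG)$ is $\varGamma_nM$ (the summand $M\ua^G$ dies stably), and Example~\ref{ex:pz} shows that $\varGamma_n\bbZ$ is not weakly injective when $|G|>1$, so $\bbZ\to\bbZ_{(n)}$ is not an isomorphism in $\StMod(\bbZ G)$ even though $\bbZ$ is finitely presented. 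The failure of exactly this statement is why the paper replaces $\StMod$ by $\StMod^\cw$; what Theorem~\ref{th:Y} actually gives is that $M\to M_{(n)}$ becomes an isomorphism in $\StMod^\cw(kG)$. This error infects your essential surjectivity step, which is moreover wrong as written for a second reason: restriction along $k\to k_{(n)}$ does not preserve finite presentation ($\bbZ_{(n)}$ is not even finitely generated over $\bbZ$), so a finitely presented $k_{(n)}G$-module does not ``restrict to a finitely presented $kG$-module''. The repair is either to invoke the fact that every finitely presented module over the central localisation $k_{(n)}G$ of $kG$ is extended from a finitely presented $kG$-module, or to argue as the paper does: prove the equivalence first on $\StMod^\cw$ with restriction as inverse, using cw-injectivity of $\varGamma_nM$, and only then pass to compacts.
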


\begin{proof} To begin with note that, for any $n$, localisation does
induce an exact functor of triangulated categories, by
Lemma~\ref{le:base-change-exact}. It suffices to prove that this is an
equivalence on $\StMod^\cw (kG)$; the second equivalence follows by
restriction to compact objects; see Theorem~\ref{th:stmodcw}.

Consider the restriction functor
\[ 
F\colon \StMod^\cw(k_{(n)}G) \lto \StMod^\cw (kG).
\] 
The composite $F(-)_{(n)}$ is the identity.  As to the other
composite, for each $kG$-module $M$, the morphism in
Definition~\ref{defn:pm} induces an exact triangle
\[ 
M \lto F(M_{(n)}) \lto \varGamma_{n}{M}\lto
\] 
in $\StMod (kG)$, as $M{\ua^{G}}$ is zero there. By
Theorem~\ref{th:Y}, the $kG$-module $\varGamma_{n}{M}$ is
cw-injective, so the first morphism in the triangle defines a natural
isomorphism in $\StMod^\cw (kG)$ from the identity functor to
$F((-)_{(n)})$. Thus $(-)_{(n)}$ is an equivalence of categories with
inverse $F$.
\end{proof}

\begin{remark} As Example~\ref{ex:pz} shows, $\varGamma_{n}{M}$ need
not be weakly injective, so the localisation functor
\[ (-)_{(n)} \colon \StMod (kG) \lto\StMod(k_{(n)}G)
\] is not necessarily an equivalence of categories.
\end{remark}

Concerning Lemma~\ref{le:base-change-exact}, it is easy to construct
examples of $kG$-modules that are weakly injective after base change
along a homomorphism $k\to k'$, but are not themselves weakly
injective. We now focus on certain classes of homomorphisms where such
a phenomenon cannot occur.

\subsection*{Pure monomorphisms} The notion of a pure monomorphism has
been recalled in Definition~\ref{defn:purity}.

\begin{lemma}
\label{le:pure-mono} Let $\phi\colon k\to k'$ be a homomorphism of
rings that is a pure monomorphism of $k$-modules, and let $M$ be a
$kG$-module.  If $k'\otimes_k M$ is cw-injective as a $kG$-module,
then so is $M$.
\end{lemma}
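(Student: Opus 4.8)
The plan is to use the purity criterion for cw-injectivity from Theorem~\ref{th:cw-injective}, namely that a $kG$-module $M$ is cw-injective if and only if the canonical monomorphism $\iota_M\colon M\to M{\ua^G}$ is a pure monomorphism of $kG$-modules. So the goal reduces to showing: if $\phi\colon k\to k'$ is a pure monomorphism of $k$-modules and $\iota_{k'\otimes_k M}$ is pure, then $\iota_M$ is pure.

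First I would observe that base change is compatible with induction: there is a natural isomorphism $k'\otimes_k(M{\ua^G})\cong (k'\otimes_k M){\ua^G}$ of $k'G$-modules, since $M{\ua^G}=kG\otimes_k M$ and $k'G=k'\otimes_k kG$, and under this identification the map $k'\otimes_k\iota_M$ corresponds to $\iota_{k'\otimes_k M}$. Thus the hypothesis says that $k'\otimes_k\iota_M$ is a pure monomorphism of $k'G$-modules. Restricting scalars along $kG\to k'G$, and using that restriction preserves pure exact sequences (a pure exact sequence of $k'G$-modules is a filtered colimit of $k'$-split, hence in particular $k$-split — more directly, tensoring with a finitely presented $kG$-module factors appropriately), we get that $k'\otimes_k\iota_M$ is a pure monomorphism of $kG$-modules. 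Here I should be slightly careful: the cleanest route is to test purity directly. A monomorphism $f\colon A\to B$ of $kG$-modules is pure if and only if for every right $kG$-module $N$ the map $N\otimes_{kG}A\to N\otimes_{kG}B$ is injective, equivalently if every morphism from a finitely presented $kG$-module into $\Coker f$ lifts. I would use the tensor formulation.

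The key step is then a descent-of-injectivity argument along the pure monomorphism $\phi$. Consider the exact sequence $0\to M\xra{\iota_M} M{\ua^G}\to \Coker\iota_M\to 0$ of $kG$-modules; I want to show it is pure exact, i.e.\ that for every right $kG$-module $N$ the sequence stays exact after $N\otimes_{kG}-$, which amounts to $\Tor_1^{kG}(N,\Coker\iota_M)\to N\otimes_{kG}M$ being zero, or more simply that $N\otimes_{kG}\iota_M$ is injective. Now since $\phi\colon k\to k'$ is a pure monomorphism of $k$-modules, for any $k$-module $X$ the map $X\to k'\otimes_k X$ is injective (purity gives a splitting after tensoring, but even the bare statement that $X\to k'\otimes_k X$ is a monomorphism follows from purity of $\phi$). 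Apply this with $X=N\otimes_{kG}M$: we get a commutative square in which the horizontal map $N\otimes_{kG}M\to N\otimes_{kG}M{\ua^G}$ is what we want to show is injective, the vertical maps $N\otimes_{kG}M\to k'\otimes_k(N\otimes_{kG}M)\cong (k'\otimes_{k'G}(k'\otimes_k M)$-type rearrangement$)$ are injective by purity of $\phi$, and the image square's horizontal map $(k'\otimes_k N)\otimes_{k'G}(k'\otimes_k M)\to (k'\otimes_k N)\otimes_{k'G}(k'\otimes_k M){\ua^G}$ is injective because $k'\otimes_k M$ is cw-injective over $k'G$ (apply Theorem~\ref{th:cw-injective} to the $k'G$-module $k'\otimes_k M$, and note $k'\otimes_k(N\otimes_{kG}M)\cong (k'\otimes_k N)\otimes_{k'G}(k'\otimes_k M)$). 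A diagram chase then forces $N\otimes_{kG}\iota_M$ to be injective. Hence $\iota_M$ is a pure monomorphism, so $M$ is cw-injective by Theorem~\ref{th:cw-injective}.

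The main obstacle I anticipate is bookkeeping the base-change identifications for the relevant tensor products: one needs the canonical isomorphisms $k'\otimes_k(M{\ua^G})\cong(k'\otimes_k M){\ua^G}$ as $k'G$-modules and $k'\otimes_k(N\otimes_{kG}M)\cong(k'\otimes_k N)\otimes_{k'G}(k'\otimes_k M)$ as $k$-modules, and to check these are compatible with the maps $\iota_{(-)}$, so that the hypothesis really translates into injectivity of the bottom horizontal map. These are standard but must be assembled correctly; once the square is set up, the descent step is immediate from the fact that a pure monomorphism of $k$-modules $X\to k'\otimes_k X$ is in particular injective functorially in $X$.
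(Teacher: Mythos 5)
Your overall strategy is sound, but there is a mismatch between the hypothesis you use and the hypothesis of the lemma. The lemma assumes that $k'\otimes_k M$ is cw-injective \emph{as a $kG$-module}; you instead translate this into ``$\iota_{k'\otimes_k M}$ is a pure monomorphism of $k'G$-modules'' and later justify the injectivity of the bottom row of your square by ``$k'\otimes_k M$ is cw-injective over $k'G$''. These are not the same statement: by Lemma~\ref{le:base-change-exact}(3) restriction along $kG\to k'G$ preserves cw-injectivity, so cw-injectivity over $k'G$ implies cw-injectivity over $kG$, but the converse implication is exactly what you would need and is neither stated in the paper nor established by you. As written, your argument proves the lemma only under the stronger hypothesis, which in particular is not the form needed for Proposition~\ref{pr:pure-cover}, where the hypothesis is again cw-injectivity over $kG$.

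The repair is small, and in fact you already have the key ingredient. Option one: run your square entirely over $kG$, replacing the bottom row by $N\otimes_{kG}(k'\otimes_k M)\to N\otimes_{kG}\bigl(kG\otimes_k(k'\otimes_k M)\bigr)$; this is injective for every right $kG$-module $N$ precisely because the stated hypothesis, via Theorem~\ref{th:cw-injective}, says that $\iota_{k'\otimes_k M}$ (formed over $kG$) is a pure monomorphism of $kG$-modules, and the square commutes by naturality of $\iota$. Option two, which is the paper's much shorter route: the isomorphism you wrote down, $N\otimes_{kG}(\phi\otimes_k M)\cong\phi\otimes_k(N\otimes_{kG}M)$, together with purity of $\phi$ over $k$, shows directly that $M\to k'\otimes_k M$ is a pure monomorphism of $kG$-modules; then Corollary~\ref{co:cw-injective}(3) (cw-injectives are closed under pure submodules) finishes the proof at once, with no need to invoke the criterion of Theorem~\ref{th:cw-injective} or to track the behaviour of $\iota_M$ under base change at all.
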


\begin{proof} The canonical morphism $\phi\otimes_{k} M\col M\to
  k'\otimes_k M$ is a pure monomorphism of $kG$-modules, since for any
  right $kG$-module $N$ we have
\[N\otimes_{kG}(\phi\otimes_{k}M)\cong\phi\otimes_{k}(N\otimes_{kG}M)\]
where the second is a monomorphism, by the hypothesis on $\phi$. Thus
the desired statement follows from the fact that a pure submodule of a
cw-injective module is cw-injective, by
Corollary~\ref{co:cw-injective}.
\end{proof}

\begin{proposition}
  \label{pr:pure-cover} Let $\{k\to k_{i}\}_{i\in I}$ be homomorphisms
  of rings such that the induced map $X\to
  \prod_{i}(k_{i}\otimes_{k}X)$ is a monomorphism for any $k$-module
  $X$.  If $M$ is a $kG$-module with the property that
  $k_{i}\otimes_{k}M$ is cw-injective as $kG$-module for each $i\in
  I$, then $M$ is cw-injective.
\end{proposition}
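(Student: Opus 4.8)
The plan is to realise $M$ as a pure submodule of a cw-injective $kG$-module and then invoke Corollary~\ref{co:cw-injective}(3); the argument runs parallel to the proof of Lemma~\ref{le:pure-mono}, which is the special case $|I|=1$.

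First I would set $P=\prod_{i\in I}(k_{i}\otimes_{k}M)$. By hypothesis each factor $k_{i}\otimes_{k}M$ is cw-injective, so $P$ is cw-injective by Corollary~\ref{co:cw-injective}(2). Let $\eta\colon M\to P$ be the $kG$-module homomorphism whose $i$-th component is the canonical map $M\to k_{i}\otimes_{k}M$, $m\mapsto 1\otimes m$. It then suffices to show that $\eta$ is a pure monomorphism, since Corollary~\ref{co:cw-injective}(3) then gives that $M$ is cw-injective.

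To prove purity, by Definition~\ref{defn:purity} it is enough to check that $N\otimes_{kG}\eta$ is a monomorphism for every right $kG$-module $N$: this forces $\eta$ itself to be a monomorphism (take $N=kG$), and the cokernel sequence of $\eta$ then stays exact after tensoring with any $N$, hence is pure exact. The one real input is a natural isomorphism of $k$-modules
\[
 N\otimes_{kG}(k_{i}\otimes_{k}M)\ \cong\ k_{i}\otimes_{k}(N\otimes_{kG}M),
\]
valid because both sides are $N\otimes_{k}k_{i}\otimes_{k}M$ modulo the same relations coming from the $G$-action (equivalently, this is associativity of the tensor product applied to the base-change identification $k_{i}\otimes_{k}M\cong k_{i}G\otimes_{kG}M$); recall that $G$ acts trivially on $k_{i}$. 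Writing $X=N\otimes_{kG}M$, one then checks that under these isomorphisms the composite
\[
 N\otimes_{kG}M\ \xra{\,N\otimes\eta\,}\ N\otimes_{kG}P\ \lto\ \prod_{i}\bigl(N\otimes_{kG}(k_{i}\otimes_{k}M)\bigr)\ \xra{\sim}\ \prod_{i}(k_{i}\otimes_{k}X)
\]
— with the middle arrow the canonical comparison map — coincides with the canonical map $X\to\prod_{i}(k_{i}\otimes_{k}X)$ appearing in the hypothesis. By assumption this map is a monomorphism, and since it factors through $N\otimes_{kG}\eta$, the latter is a monomorphism too.

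The only step requiring any care is the bookkeeping just described: verifying the displayed natural isomorphism and checking the compatibility that identifies the composite above with the canonical map of the hypothesis. This is routine once one keeps track of which tensor factor $G$ acts on, and no deeper obstacle arises — once $\eta$ is seen to be pure, the conclusion is purely formal.
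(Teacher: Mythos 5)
Your argument is correct and is essentially the paper's own proof: both realise $M$ as a pure submodule of the cw-injective product $\prod_i(k_i\otimes_k M)$ via the canonical map $\eta$, verify purity by factoring the hypothesised monomorphism $X\to\prod_i(k_i\otimes_k X)$ for $X=N\otimes_{kG}M$ through $N\otimes_{kG}\eta$, and conclude with Corollary~\ref{co:cw-injective}. The extra bookkeeping you spell out (the isomorphism $N\otimes_{kG}(k_i\otimes_k M)\cong k_i\otimes_k(N\otimes_{kG}M)$ and the comparison map into the product) is exactly what the paper uses implicitly.
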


\begin{proof} 
We claim that for any $kG$-module $M$, the natural morphism
\[ \eta_{M}\colon M \lto \prod_{i\in I}(k_{i}\otimes_{k}M)
\] is a pure monomorphism of $kG$-modules. Indeed, for any right
$kG$-module $N$ the morphism $\eta_{(N\otimes_{kG}M)}$ factors as
\[ N \otimes_{kG} M \xra{\ N\otimes_{kG}\eta_{M}\ }
N\otimes_{kG}\prod_{i\in I}(k_{i}\otimes_{k}M)\lto \prod_{i\in I}\big(
k_{i}\otimes_{k}(N\otimes_{kG}M)\big )
\] where the morphism on the right is the canonical one. Since
$\eta_{(N\otimes_{kG}M)}$ is a monomorphism, by hypothesis, so is
$N\otimes_{kG}\eta_M$.  This settles the claim.

Now if each $k_{i}\otimes_{k}M$ is cw-injective as a $kG$-module, then
so is their product and hence also its pure submodule $M$, by
Corollary~\ref{co:cw-injective}.
\end{proof}

\subsection*{A local-global principle} 
From Proposition~\ref{pr:pure-cover} we obtain the following
local-to-global type results for detecting cw-injectivity.  As usual,
the localisation of a $k$-module $M$ at a prime ideal $\fp$ of $k$ is
denoted $M_\fp$.

\begin{corollary}
\label{co:local-global} A $kG$-module $M$ is cw-injective if and only
if $M_\fp$ is a cw-injective $k_\fp G$-module for every maximal ideal
$\fp\subseteq k$.
\end{corollary}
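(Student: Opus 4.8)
The plan is to derive Corollary~\ref{co:local-global} directly from Proposition~\ref{pr:pure-cover} by choosing the family $\{k\to k_i\}$ to be the localisations $\{k\to k_\fp\}$ indexed by the maximal ideals $\fp\subseteq k$. The forward implication is immediate: if $M$ is cw-injective as a $kG$-module, then base change along $k\to k_\fp$ preserves cw-injectivity by Lemma~\ref{le:base-change-exact}(3), so each $M_\fp=k_\fp\otimes_k M$ is cw-injective as a $k_\fp G$-module. (One should note here that being cw-injective as a $k_\fp G$-module and as a $kG$-module amount to the same thing for the module $M_\fp$, since restriction along $k\to k_\fp$ preserves cw-injectivity in the other direction as well, again by Lemma~\ref{le:base-change-exact}(3); this point deserves a sentence.)

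For the converse, first I would verify the hypothesis of Proposition~\ref{pr:pure-cover}, namely that the natural map $X\to\prod_{\fp}X_\fp$ is a monomorphism for every $k$-module $X$, where the product runs over all maximal ideals. This is the standard fact that an element of $X$ lying in the kernel of $X\to X_\fp$ for every maximal $\fp$ is annihilated by an ideal not contained in any maximal ideal, hence by the unit ideal, so is zero. Granting this, Proposition~\ref{pr:pure-cover} applies verbatim: if $M_\fp$ is cw-injective as a $kG$-module for every maximal $\fp$, then $M$ is cw-injective. To connect with the stated hypothesis, one again invokes that $M_\fp$ is cw-injective over $kG$ if and only if it is cw-injective over $k_\fp G$, by the restriction half of Lemma~\ref{le:base-change-exact}(3).

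The proof is thus essentially a bookkeeping exercise, and the only genuine content is the observation that localisation along the family of all maximal ideals gives a family satisfying the monomorphism hypothesis of Proposition~\ref{pr:pure-cover}. I do not anticipate a real obstacle; the single point requiring a little care is the back-and-forth between "cw-injective as a $kG$-module" and "cw-injective as a $k_\fp G$-module" for a module of the form $M_\fp$, which is handled uniformly by Lemma~\ref{le:base-change-exact}(3) since both restriction and base change along $k\to k_\fp$ preserve cw-injectivity.
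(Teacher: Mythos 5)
Your proposal is correct and matches the paper's own proof: the forward direction via Lemma~\ref{le:base-change-exact}, and the converse by applying Proposition~\ref{pr:pure-cover} to the family of localisations at maximal ideals, using that $X\to\prod_\fp X_\fp$ is a monomorphism. Your extra remark on passing between cw-injectivity over $k_\fp G$ and over $kG$ via restriction is a point the paper leaves implicit, but it is handled exactly as you say.
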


\begin{proof} If $M$ is cw-injective, so is each $M_\fp$ by
Lemma~\ref{le:base-change-exact}. The converse is a special case of
Proposition~\ref{pr:pure-cover}, since the canonical map $X\to
\prod_\fp X_\fp$ is a monomorphism for each $k$-module $X$; see
\cite[Theorem~4.6]{Matsumura:1986a}.
\end{proof}
  
The next result adds to Theorem~\ref{th:local}.

\begin{corollary} 
  Fix an integer $n$ divisible by $|G|$, and let $M$ be a
  $kG$-module. The following conditions are equivalent:
\begin{enumerate}
\item The $kG$-module $M$ is cw-injective.
\item The $k_{(n)}G$-module $M_{(n)}$ is cw-injective.
\item The $k_{(p)}G$-module $M_{(p)}$ is cw-injective for every prime
$p$ dividing $n$.
\end{enumerate}
\end{corollary}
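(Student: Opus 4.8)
The plan is to get $(1)\Leftrightarrow(2)$ almost for free from Theorem~\ref{th:local} and to concentrate the real work in the implication $(3)\Rightarrow(1)$, which I would handle with the local-global principle of Corollary~\ref{co:local-global}.

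For $(1)\Leftrightarrow(2)$: since $|G|$ divides $n$, the localisation functor $(-)_{(n)}\colon\StMod^\cw (kG)\to\StMod^\cw(k_{(n)}G)$ is an equivalence of triangulated categories by Theorem~\ref{th:local}, and a $kG$-module is cw-injective exactly when it becomes a zero object in the corresponding $\StMod^\cw$ (Lemma~\ref{le:perp}); since an equivalence detects zero objects, $M$ is cw-injective if and only if $M_{(n)}$ is. If one prefers to bypass the language of zero objects, $(1)\Rightarrow(2)$ is simply preservation of cw-injectivity under base change, Lemma~\ref{le:base-change-exact}(3), while for $(2)\Rightarrow(1)$ one uses that restriction along $k\to k_{(n)}$ makes $M_{(n)}$ cw-injective over $kG$ and that Definition~\ref{defn:pm} produces an exact triangle $M\to M_{(n)}\to\varGamma_n M\to$ in $\StMod (kG)$ (the summand $M\ua^G$ being zero there) whose third term is cw-injective by Theorem~\ref{th:Y}, so $M$ lies in the triangulated subcategory of cw-injectives. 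The implication $(1)\Rightarrow(3)$ is, again, preservation of cw-injectivity under base change along $k\to k_{(p)}$ for each prime $p$ dividing $n$.

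The substance is $(3)\Rightarrow(1)$. By Corollary~\ref{co:local-global} it suffices to show that $M_\fp$ is cw-injective over $k_\fp G$ for every maximal ideal $\fp\subseteq k$; fix one and let $p'$ be the characteristic of the residue field $k/\fp$. If $p'$ is a prime dividing $n$, then every integer prime to $p'$ is a unit in $k_\fp$, so the localisation $k\to k_\fp$ factors through $k_{(p')}$ and hence $M_\fp\cong k_\fp\otimes_{k_{(p')}}M_{(p')}$; since $M_{(p')}$ is cw-injective by hypothesis, so is $M_\fp$, by Lemma~\ref{le:base-change-exact}(3). If instead $p'$ is $0$ or a prime not dividing $n$, then, as $|G|$ divides $n$, the integer $|G|$ is a unit in $k_\fp$, so by Corollary~\ref{cor:maschke} every $k_\fp G$-module --- in particular $M_\fp$ --- is weakly injective, hence cw-injective. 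Either way $M_\fp$ is cw-injective, so Corollary~\ref{co:local-global} yields $(1)$.

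The one step needing genuine thought is this case split: the point is that the primes dividing $n$ are precisely the residue characteristics at which cw-injectivity is not automatic, the remaining ones being disposed of by the Maschke-type Corollary~\ref{cor:maschke} exactly because $|G|$ divides $n$. The rest is routine manipulation of base change, restriction, and the local-global principle, all already available.
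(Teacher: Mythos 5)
Your proof is correct, and for the substantive implication it takes a route genuinely different from the paper's. The paper proves $(3)\Rightarrow(2)$ by applying Proposition~\ref{pr:pure-cover} to the family $\{k_{(n)}\to k_{(p)}\}_{p\mid n}$ over the base ring $k_{(n)}$: the key observation is that $k_{(p)}\otimes_{k_{(n)}}X\cong\bbZ_{(p)}\otimes_{\bbZ_{(n)}}X$ and that the ideals $(p)$ with $p\mid n$ are exactly the maximal ideals of $\bbZ_{(n)}$, so Matsumura's theorem gives the required embedding $X\to\prod_p X_{(p)}$, and then $(1)\Leftrightarrow(2)$ via Theorem~\ref{th:local} finishes the argument. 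You instead prove $(3)\Rightarrow(1)$ directly via Corollary~\ref{co:local-global}, checking cw-injectivity of $M_\fp$ at every maximal ideal $\fp$ of $k$ by a case split on the residue characteristic $p'$ of $k/\fp$: when $p'\mid n$ you factor $k\to k_\fp$ through $k_{(p')}$ (every integer coprime to $p'$ is a unit in $k_\fp$, since it is nonzero in the residue field) and invoke base change, Lemma~\ref{le:base-change-exact}(3); when $p'=0$ or $p'\nmid n$, the hypothesis $|G|\mid n$ forces $|G|$ to be a unit in $k_\fp$, so Corollary~\ref{cor:maschke} makes every $k_\fp G$-module weakly injective. Both arguments are sound and both ultimately rest on the pure-embedding criterion (Corollary~\ref{co:local-global} being a special case of Proposition~\ref{pr:pure-cover}); the paper's version is shorter because working over $k_{(n)}$, i.e.\ over $\bbZ_{(n)}$, means the only residue characteristics that occur are the primes dividing $n$, so no Maschke-type case is needed, whereas your version stays over $k$ itself and makes explicit the conceptual point that the primes not dividing $|G|$ contribute nothing, at the cost of the case analysis. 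Your treatment of $(1)\Leftrightarrow(2)$ (equivalence detecting zero objects, or alternatively the triangle $M\to M_{(n)}\to\varGamma_nM$ with Theorem~\ref{th:Y}) and of $(1)\Rightarrow(3)$ by base change matches the paper in substance.
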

\begin{proof} (1) $\Leftrightarrow$ (2): This follows from
Theorem~\ref{th:local}.

(2) $\Ra$ (3): This is by Lemma~\ref{le:base-change-exact}, applied to
the homomorphism $k_{(n)}\to k_{(p)}$.

(3) $\Ra$ (2): We apply Proposition~\ref{pr:pure-cover} to the family
$k_{(n)}\to k_{(p)}$, as $p$ ranges over the primes dividing $n$.  To this
end, let $X$ be a $k_{(n)}$-module, and note that there is a natural
isomorphism
\[ k_{(p)}\otimes_{k_{(n)}}X \xra{\sim} \bbZ_{(p)}\otimes_{\bbZ_{(n)}}X
\] with $X$ viewed as a $\bbZ_{(n)}$-module via restriction. Since the
ideals $(p)$ in $\bbZ_{(n)}$, where $p$ divides $n$, is a complete
list of maximal ideals of $\bbZ_{(n)}$, it follows from
\cite[Theorem~4.6]{Matsumura:1986a} that the induced map $X\to
\prod_{p}X_{(p)}$ is a monomorphism, as desired.
\end{proof}

\subsection*{Epimorphisms}

Recall that a homomorphism $R\to S$ of rings is an \emph{epimorphism}
(in the category of rings) if and only if the canonical morphism
$S\otimes_R M\to M$ is an isomorphism for each $S$-module $M$;
equivalently, if the multiplication map $S\otimes_{R}S\to S$ is an
isomorphism. This means that the restriction functor $\Mod (S)\to \Mod
(R)$ is fully faithful. For example, each localisation $R\to
R[\Si^{-1}]$ inverting the elements of a subset $\Si\subseteq R$ is an
epimorphism.

\begin{lemma} If $\phi\colon k\to k'$ is a ring epimorphism, then so
is the induced homomorphism $kG\to k'G$.
\end{lemma}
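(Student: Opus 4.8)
The plan is to reduce the statement about the ring homomorphism $kG \to k'G$ to the hypothesis on $\phi\colon k\to k'$ by applying the characterisation of ring epimorphisms recalled just above the lemma: a homomorphism $R\to S$ is an epimorphism if and only if the multiplication map $S\otimes_R S\to S$ is an isomorphism. So the goal is to show that the multiplication map $k'G\otimes_{kG}k'G \to k'G$ is an isomorphism, knowing that $k'\otimes_k k'\to k'$ is an isomorphism.

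The key step is an identification of $k'G\otimes_{kG}k'G$. Since $k'G = k'\otimes_k kG$ as $(k'G, kG)$-bimodules (with $kG$ acting on the right factor), one has a natural isomorphism
\[
k'G\otimes_{kG}k'G \;\cong\; (k'\otimes_k kG)\otimes_{kG}k'G \;\cong\; k'\otimes_k k'G \;\cong\; k'\otimes_k k'\otimes_k kG \;\cong\; (k'\otimes_k k')G,
\]
all maps being the obvious ones, and under this identification the multiplication map $k'G\otimes_{kG}k'G\to k'G$ corresponds to the map $(k'\otimes_k k')G\to k'G$ induced by the multiplication $k'\otimes_k k'\to k'$ on coefficients. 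Since $\phi$ is a ring epimorphism, $k'\otimes_k k'\to k'$ is an isomorphism, hence so is the induced map on group algebras, and therefore $k'G\otimes_{kG}k'G\to k'G$ is an isomorphism. Applying the criterion again, $kG\to k'G$ is a ring epimorphism.

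I do not expect any genuine obstacle here; the only point requiring a little care is checking that the chain of natural isomorphisms for $k'G\otimes_{kG}k'G$ is compatible with the respective multiplication maps — i.e.\ that the diagram comparing the two multiplications commutes — but this is a routine diagram chase using that all the isomorphisms involved are the canonical ones arising from $k'G = k'\otimes_k kG$ and the associativity/unit constraints of $\otimes_k$. Alternatively, and perhaps more cleanly, one can argue directly that restriction $\Mod(k'G)\to\Mod(kG)$ is fully faithful: for a $k'G$-module $M$, restricting along $kG\to k'G$ first to $k'$ (via $k\to k'$, using that restriction $\Mod(k')\to\Mod(k)$ is fully faithful since $\phi$ is an epimorphism) and noting that the $G$-action is unchanged shows that $\Hom_{kG}(M,N) = \Hom_{k'G}(M,N)$ for $k'G$-modules $M,N$; this again yields the conclusion by the characterisation of ring epimorphisms via full faithfulness of restriction.
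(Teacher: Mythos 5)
Your argument is correct and is essentially the paper's proof: the paper also invokes the criterion that $k'\otimes_k k'\to k'$ is an isomorphism and deduces that $k'G\otimes_{kG}k'G\to k'G$ is an isomorphism, exactly via the identification you spell out (the paper just leaves it implicit). Your alternative via full faithfulness of restriction is a fine variant, but the main line of your proposal matches the paper.
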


\begin{proof} If $\phi$ is an epimorphism, then the induced morphism
$k'\otimes_k k'\to k'$ is an isomorphism. It follows that the induced
morphism $k'G\otimes_{kG}k'G\to k'G$ is an isomorphism. Thus $kG\to
k'G$ is a ring epimorphism.
\end{proof}

\begin{proposition} Let $k\to k'$ be a ring epimorphism and set
\begin{align*} \sfU&=\{M\in\StMod^\cw (kG)\mid k'\otimes_{k} M=0\}\\
\sfV&=\{M\in\StMod^\cw (kG)\mid M\xra{\sim}k'\otimes_{k}M\,.  \}
\end{align*} Then base change induces an equivalence of triangulated
categories
\[ \StMod^\cw (kG)/\sfU\stackrel{\sim}\lto\StMod^\cw (k'G)
\] while restriction induces an equivalence of triangulated categories
\[ \StMod^\cw (k'G)\stackrel{\sim}\lto \sfV.
\]
\end{proposition}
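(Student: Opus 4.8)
The plan is to set up this result as an instance of the general localisation-theory input already available for compactly generated triangulated categories, now applied to $\StMod^\cw$. First I would record the observation that base change $k'\otimes_k-$ and restriction form an adjoint pair between $\StMod^\cw(kG)$ and $\StMod^\cw(k'G)$: both functors are exact and coproduct-preserving by Lemma~\ref{le:base-change-exact} together with Theorem~\ref{th:stmodcw}, and the adjunction descends from the module level because $\PHom$ is respected. Since $k\to k'$ is a ring epimorphism, $k'G\to k'G$ is a ring epimorphism by the preceding Lemma, so the restriction functor $\Mod(k'G)\to\Mod(kG)$ is fully faithful; I would then check that this fully faithful property passes to the stable quotients, i.e.\ that restriction $\StMod^\cw(k'G)\to\StMod^\cw(kG)$ is fully faithful with essential image exactly $\sfV$. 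Concretely, full faithfulness on $\Mod$ plus the fact that restriction sends cw-injectives to cw-injectives (again Lemma~\ref{le:base-change-exact}) should give full faithfulness on $\StMod^\cw$; and an object $M$ of $\StMod^\cw(kG)$ is in the essential image of restriction precisely when the counit $k'\otimes_k M\to M$ is an isomorphism, which is the defining condition of $\sfV$.

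Next I would identify $\sfU$ as the kernel of base change and show that base change induces an equivalence $\StMod^\cw(kG)/\sfU\xrightarrow{\sim}\StMod^\cw(k'G)$. The mechanism is the standard one: the composite $\StMod^\cw(k'G)\hookrightarrow\StMod^\cw(kG)\twoheadrightarrow\StMod^\cw(kG)/\sfU$ is an equivalence, with quasi-inverse induced by base change, because restriction is fully faithful with image $\sfV$, and $\sfV$ and $\sfU$ are "complementary" in the sense that restriction of base change is the identity on $\StMod^\cw(k'G)$ while for any $M$ there is a triangle $\Gamma M\to M\to k'\otimes_k M\to$ with $\Gamma M\in\sfU$ (the cone of the counit, which base change kills since $k'\otimes_k k'\xrightarrow{\sim} k'$ forces $k'\otimes_k(k'\otimes_k M)\xrightarrow{\sim} k'\otimes_k M$). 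This simultaneously shows that $\sfV$ is a right adjoint section of the quotient, hence gives the equivalence $\StMod^\cw(k'G)\xrightarrow{\sim}\sfV$, and that $\sfU=\ker(k'\otimes_k-)$ is the corresponding colocalising/localising subcategory, with the quotient by $\sfU$ realised by base change.

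The main obstacle I expect is bookkeeping at the level of the Verdier quotient defining $\StMod^\cw$ rather than anything conceptually deep: one must make sure that the adjunction, the full faithfulness of restriction, and the triangle coming from the counit all survive passage from $\Mod(kG)$ through $\StMod(kG)$ and then through the localisation at $(\stmod kG)^\perp$. The key point to verify carefully is that restriction $\StMod^\cw(k'G)\to\StMod^\cw(kG)$ is still fully faithful: on $\StMod$ this follows because a morphism of $k'G$-modules factoring through a weakly injective $kG$-module already factors through a weakly injective $k'G$-module (use $M\ua^{G}$ over $k'$ versus over $k$, and Lemma~\ref{le:stable0}), and then on $\StMod^\cw$ one invokes that restriction sends $(\stmod k'G)^\perp$ into $(\stmod kG)^\perp$ so the quotient functors are compatible. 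Once this is in place, the two claimed equivalences follow formally from the reflection/coreflection picture, exactly as in Corollary~\ref{cor:compact} and the localisation argument of Theorem~\ref{th:local}.
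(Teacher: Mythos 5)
Your proposal is correct and follows essentially the paper's route: the decisive point in both is that restriction followed by base change is naturally isomorphic to the identity because $kG\to k'G$ is a ring epimorphism, after which the paper simply invokes Proposition~1.3 of Chapter~I of Gabriel--Zisman to conclude that restriction is fully faithful with essential image $\sfV$ and that base change is, up to equivalence, the quotient functor with kernel $\sfU$ --- precisely the facts you verify by hand via the descended adjunction and the triangle on the map $M\to k'\otimes_k M$. The only slip is terminological: that map is the unit, not the counit, of the base change--restriction adjunction.
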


\begin{proof} Restriction followed by base change is naturally
isomorphic to the identity. By general principles, this implies that
restriction is fully faithful, while base change is up to an
equivalence a quotient functor; see Proposition~1.3 of Chap.~I in
\cite{Gabriel/Zisman:1967a}.
\end{proof}

\begin{remark}
  \label{rem:epimorphisms} Let $k\to k'$ be an epimorphism such that
  $k'$ is finitely presented as a $k$-module, and set
\begin{align*} \sfC&=\{M\in\stmod (kG)\mid k'\otimes_{k} M=0\}\\
\sfD&=\{M\in\stmod (kG)\mid M\xra{\sim}k'\otimes_{k} M\}.
\end{align*} Then $\sfC$ and $\sfD$ form thick tensor ideals of
$\stmod (kG)$ such that the inclusion of $\sfC$ admits a right adjoint
while the inclusion of $\sfD$ admits a left adjoint. Moreover,
$\sfC^\perp=\sfD$ and $\sfC={^\perp\sfD}$. Thus the inclusions of
$\sfC$ and $\sfD$ and their adjoints form a localisation sequence
\[ \xymatrix{ \sfC \ar[r]<0.5ex> \ar@{<-}[r]<-0.5ex> &
\stmod (kG) \ar[r]<0.5ex> \ar@{<-}[r]<-0.5ex> & \sfD. }
\] 

In this situation the Balmer spectrum of $\stmod (kG)$ is
disconnected, provided that $\sfC\neq 0\neq \sfD$. Recall
from \cite{BaSpec} that for any essentially small tensor triangulated
category $\sfT$ there is an associated spectral topological space
$\Spc \sfT$ which provides a universal notion of tensor-compatible
supports for objects of $\sfT$. The above localisation sequence yields
a decomposition
\[ \Spc (\stmod kG)= \Spc \sfC \coprod \Spc \sfD\] by
Theorem~\ref{th:disconnected}. We refer to Appendix~\ref{ap:appendix}
for further explanations.
\end{remark}

In the next section we describe an example that illustrates the
preceding remark.

\section{Thick subcategories}
\label{se:thick} Let $G$ be a finite group and fix a prime $p\in\bbZ$.

For each positive integer $n$, the canonical homomorphism
$\phi_{n}\colon \bbZ\to \bbZ/{p^n}$ is an epimorphism of rings, and
$\bbZ/{p^n}$ is finitely presented over $\bbZ$. Restriction via
$\phi_{n}$ identifies the $\bbZ/{p^n}G$-modules with all $\bbZ
G$-modules that are annihilated by $p^n$.

Let $\sfD_n$ be the full subcategory of $\stmod(\bbZ G)$ whose objects
are the $\bbZ G$-modules isomorphic to modules annihilated by $p^n$;
this is a tensor ideal thick subcategory and restriction via
$\phi_{n}$ identifies it with $\stmod (\bbZ/{p^n}G)$. Note that any
$kG$-module $M$ is in $\sfD_n$ when $M\otimes_{\bbZ}M$ is in
$\sfD_{n}$. This is easy to verify and means that $\sfD_{n}$ is
radical in the sense of Balmer~\cite{BaSpec}.

When $p$ does not divide $|G|$, one has $\sfD_{n}=0$ for each $n$, by
Theorem~\ref{th:local}.

\begin{proposition}
\label{pr:thick-tower} 
When $p$ divides $|G|$ one has $\sfD_n\neq \sfD_{n+1}$ for each $n\geq
1$, so these tensor ideal thick subcategories form a strictly
increasing chain:
\[ 
\sfD_1\subsetneq \sfD_2\subsetneq \sfD_3\subsetneq \ldots\subseteq \stmod (\bbZ G)\,.
\] 
Moreover, $\bbZ\not\in \sfD_{n}$ for any $n$, so $\bigcup_{n\geqslant
  1}D_{n}$ is a proper thick subcategory of $\stmod (\bbZ G)$.
\end{proposition}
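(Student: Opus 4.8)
The plan is to reinterpret membership in $\sfD_{n}$ as an honest isomorphism of $\bbZ G$-modules and then use two additive invariants that take "projective values'' on weakly injectives to separate $\bbZ$ and $\bbZ/p^{n+1}$ from all $p^{n}$-torsion modules. Since a module killed by $p^{n}$ is killed by $p^{n+1}$ we have $\sfD_{n}\subseteq\sfD_{n+1}$, so only strictness is at issue, and a convenient witness for $\sfD_{n+1}\setminus\sfD_{n}$ should be the trivial module $\bbZ/p^{n+1}$. As $\Mod(\bbZ G)$ is a Frobenius category, $M$ lies in $\sfD_{n}$ iff $M$ is stably isomorphic to some $N$ with $p^{n}N=0$; lifting a stable isomorphism and its inverse to maps $f,g$ and routing the correction term $gf-\id_{M}$ through $\iota_{M}$ via Lemma~\ref{le:stable0} and Theorem~\ref{th:trace}, this is equivalent to an isomorphism of $\bbZ G$-modules
\[ M\oplus W_{1}\;\cong\;N\oplus W_{2},\qquad p^{n}N=0,\]
with $W_{1},W_{2}$ finitely generated weakly injective $\bbZ G$-modules. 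So it suffices to rule out such a presentation for $M=\bbZ$ and for $M=\bbZ/p^{n+1}$.

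\emph{Key Lemma.} Let $W$ be a finitely generated weakly injective $\bbZ G$-module. By Theorem~\ref{th:wp=wi}, $W$ is a direct summand of $W{\ua^{G}}=\bbZ G\otimes_{\bbZ}W$; since the $G$-action on $W{\ua^{G}}$ only touches the $\bbZ G$-factor, a cyclic decomposition of the abelian group $W$ yields an isomorphism of $\bbZ G$-modules $W{\ua^{G}}\cong\bigoplus_{i}\bbZ G/(d_{i})$ with finitely many $d_{i}\in\{0,2,3,\dots\}$. Two consequences: (i) the torsion-free quotient $W/\mathrm{tors}\,W$ is a summand of the torsion-free part $(\bbZ G)^{r}$ of $W{\ua^{G}}$, hence a projective $\bbZ G$-module; (ii) for each $k\ge 0$ the $k$-th Ulm factor $U_{k}(W):=(p^{k}W)[p]\big/(p^{k+1}W)[p]$, a module over $\bbF_{p}G=(\bbZ/p)G$, is a summand of $U_{k}(W{\ua^{G}})$, which is a finite free $\bbF_{p}G$-module because $U_{k}(\bbZ G)=0$ while $U_{k}(\bbZ G/(d))$ is the rank-one free module $\bbF_{p}G$ when $v_{p}(d)=k+1$ and $0$ otherwise; hence $U_{k}(W)$ is a projective $\bbF_{p}G$-module. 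Note that $M\mapsto M/\mathrm{tors}\,M$ and $M\mapsto U_{k}(M)$ are additive.

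\emph{The two detections.} Apply $(-)/\mathrm{tors}$ to $\bbZ\oplus W_{1}\cong N\oplus W_{2}$: as $N$ is torsion this gives $\bbZ\oplus(W_{1}/\mathrm{tors}\,W_{1})\cong W_{2}/\mathrm{tors}\,W_{2}$, whose right-hand side is projective by (i); thus $\bbZ$ is a summand of a projective $\bbZ G$-module, hence projective, hence weakly projective (since $k$-split exact sequences are exact), hence weakly injective by Theorem~\ref{th:wp=wi}. By Corollary~\ref{cor:maschke} this forces $|G|$ to be invertible in $\bbZ$, i.e.\ $|G|=1$, contradicting $p\mid|G|$; so $\bbZ\notin\sfD_{n}$ for all $n$. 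Next apply $U_{n}$ to $\bbZ/p^{n+1}\oplus W_{1}\cong N\oplus W_{2}$: here $U_{n}(\bbZ/p^{n+1})=\bbF_{p}$ (the trivial module), $U_{n}(N)=0$ because $p^{n}N=0$, and $U_{n}(W_{1}),U_{n}(W_{2})$ are projective over $\bbF_{p}G$ by (ii); hence $\bbF_{p}$ is a summand of a projective $\bbF_{p}G$-module, so projective, so weakly injective over $\bbF_{p}G$ by Theorem~\ref{th:wp=wi}, which by Corollary~\ref{cor:maschke} over $\bbF_{p}$ again contradicts $p\mid|G|$. Thus $\bbZ/p^{n+1}\notin\sfD_{n}$, giving $\sfD_{n}\subsetneq\sfD_{n+1}$. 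Finally $\bigcup_{n\ge1}\sfD_{n}$ is a directed union of thick subcategories, hence thick, and it is proper because it omits $\bbZ$.

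The crux is recognising that the invariant separating the modules $\bbZ/p^{n+1}$ must be an Ulm-type invariant and verifying part (ii) of the Key Lemma: the torsion-free-quotient invariant of (i) dispatches $\bbZ$ at once but is blind to torsion modules, so some finer, $\bbF_{p}G$-valued invariant is forced. Everything else — the Frobenius translation in the first paragraph, the explicit shape of $W{\ua^{G}}$, and additivity of $U_{k}$ — is routine book-keeping; one could alternatively base-change along $\bbZ\to\bbZ_{(p)}$ first, but this is not needed.
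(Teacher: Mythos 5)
Your argument is correct, but it takes a genuinely different route from the paper's. The paper invokes the left adjoint $\bbZ/p^{n}\otimes_{\bbZ}-$ to the inclusion $\sfD_{n}\hookrightarrow\stmod(\bbZ G)$ (from Remark~\ref{rem:epimorphisms}) to reduce both non-membership statements to showing that the natural maps $\bbZ/p^{n+1}\to\bbZ/p^{n}$ and $\bbZ\to\bbZ/p^{n}$ are not stable isomorphisms; for the first it assumes a stable inverse $\sigma$, factors $\id-\eps\sigma$ through $(\bbZ/p^{n})G$ via the trace criterion, and extracts the congruence $1-ap=|G|c_{1}$ in $\bbZ/p^{n}$, which is absurd since $p\mid|G|$. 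You instead avoid the adjoint entirely: you unpack a stable isomorphism into a genuine module isomorphism $M\oplus W_{1}\cong N\oplus W_{2}$ with weakly injective summands (the standard Frobenius-category splitting, e.g.\ embed $M$ into $N\oplus M\ua^{G}$ via $(f,\iota_{M})$ and note the cokernel is stably trivial, hence weakly injective by Theorem~\ref{th:wp=wi}, so the $k$-split sequence splits), and then separate $\bbZ$ and $\bbZ/p^{n+1}$ from all $p^{n}$-torsion modules by additive invariants --- the torsion-free quotient and the Ulm factors $U_{k}$ --- which take projective values on finitely generated weakly injectives because $W\ua^{G}\cong\bigoplus_{i}\bbZ G/(d_{i})$; both detections then terminate in Corollary~\ref{cor:maschke}. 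Your computations of $U_{k}(\bbZ G/(d))$ and the additivity/functoriality claims check out, and in fact only the invariants of $W_{2}=M\ua^{G}$ are needed, so the finite-generation bookkeeping is unproblematic. What each approach buys: the paper's proof is shorter and purely a trace computation, but it leans on the adjoint asserted without proof in Remark~\ref{rem:epimorphisms}; yours is longer but self-contained at the level of Sections 2 and 5, and it proves directly the a priori stronger fact that $\bbZ/p^{n+1}$ is not stably isomorphic to \emph{any} module killed by $p^{n}$. The only stylistic caveat is that the reduction in your first paragraph is stated quite tersely and deserves the explicit splitting argument sketched above, but this is a standard fact, not a gap.
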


\begin{proof} Fix an $n\geq 1$. Evidently $\bbZ/p^{n+1}$ is in
$\sfD_{n+1}$; we claim that it is not in $\sfD_{n}$. Indeed, the
inclusion $\sfD_n\to \stmod (\bbZ G)$ admits
$\bbZ/{p^n}\otimes_\bbZ -$ as a left adjoint; see
Remark~\ref{rem:epimorphisms}. It thus suffices to show that the
natural morphism $\eps\colon \bbZ/{p^{n+1}}\to \bbZ/{p^{n}}$ is not an
isomorphism in $\stmod (\bbZ G)$. Assume to the contrary that it admits
an inverse, say a morphism $\sigma\colon \bbZ/{p^{n}}\to
\bbZ/{p^{n+1}}$. Observe that $\sigma(1)=ap$, for some integer $a$,
since $\sigma$ is to be $\bbZ$-linear.

In $\stmod (\bbZ G)$ the morphism $\eps\sigma$ is the identity map of
$\bbZ/{p^{n}}$, so that in $\mod (kG)$ the endomorphism $\id -
\eps\sigma$ of $\bbZ/p^{n}$ admits a factorisation
\[ 
\xymatrixcolsep{1pc} 
\xymatrixrowsep{1pc} 
\xymatrix{ \bbZ/{p^{n}}
\ar@{->}[dr]_{\kappa} \ar@{->}[rr]^{\id - \eps\sigma} &&\bbZ/{p^{n}}
\\ &(\bbZ/{p^{n}})G \ar@{->}[ur]_{\pi}&}
\] where $\pi$ is the natural morphism; see
Theorem~\ref{th:trace}. Write $\kappa(1)=\sum_{g\in G}g\otimes c_{g}$
with $c_{g}\in \bbZ/{p^{n}}$. Note that $c_{g}=c_{1}$ for each $g$,
since $\kappa$ is $\bbZ G$-linear.  Thus the commutativity of the
diagram above yields
\[ 
1 - ap = (\id - \eps\sigma)(1) = \pi(\sum_{g\in G}g\otimes c_{1}) =
|G|c_{1} \quad \text{in $\bbZ/{p^{n}}$.}
\] This is a contradiction, for $p$ divides $|G|$. Thus,
$\sfD_{n}\ne\sfD_{n+1}$, as claimed.

It remains to verify that $\bbZ$ is not in $\sfD_{n}$ for any $n\geq
1$. As above, it suffices to verify that the natural morphism $\bbZ\to
\bbZ/p^{n}$ is not an isomorphism in $\stmod (\bbZ G)$, but this is clear
because $\Hom_{\bbZ}(\bbZ/p^{n},\bbZ)=0$.
\end{proof}

In order to connect the preceding computation to the problem of
classifying thick subcategories of the stable module category, we
recall some basics concerning relative cohomology; see
\cite[\S3.9]{Benson:1991a}.

\subsection*{Relative cohomology}
Let $k$ be a commutative ring and $G$ a finite group. A \emph{relative
  projective resolution} of a $kG$-module $M$ is a complex $P$ of
$kG$-modules with each $P^{i}$ weakly projective, $P^{i}=0$ when
$i>0$, and equipped with a morphism of complexes $\eps\colon P\to M$
of $kG$-modules such that the augmented complex
\[
\cdots \lto P^{-1}\lto P^{0}\xra{\ \eps\ } M\lto 0
\]
is $k$-split exact. Relative projective resolutions exists and are
unique up to homotopy; they can be constructed starting with the
morphism $\pi_{M}\col M\ua^{G}\to M$. For any such $P$, and
$kG$-module $N$, we set
\begin{equation}\label{eq:ext}
\Ext_{G,1}^{*}(M,N) = H^{*}(\Hom_{kG}(P,N))\,.
\end{equation}
These $k$-modules are independent of the choice of $P$. Note that 
\[
\Ext_{G,1}^{*}(M,N) \cong \Ext_{kG}^{*}(M,N)
\]
when $M$ is projective over $k$, since any $kG$-projective resolution of $M$ is  a relative projective resolution. In particular, $\Ext_{G,1}^{*}(k,k)\cong \Ext_{kG}^{*}(k,k)$.

As usual, $\Ext_{G,1}^{*}(M,M)$ is a graded $k$-algebra, and $\Ext_{G,1}^{*}(M,N)$ is a graded bimodule with $\Ext_{G,1}^{*}(N,N)$ acting on the left and $\Ext_{G,1}^{*}(M,M)$ on the right. The functor $-\otimes_{k}M$, with diagonal $G$-action, induces a homomorphism
\[
\zeta_{M}\colon \Ext_{G,1}^{*}(k,k)\to \Ext_{G,1}^{*}(M,M)
\]
of graded $k$-algebras, and the $\Ext_{G,1}^{*}(k,k)$ action on $\Ext_{G,1}^{*}(M,N)$ through $\zeta_{M}$ and through $\zeta_{N}$ coincide, up to the usual sign.

For a $kG$-module $M$ we set
\[
H^{*}(G,M)=\Ext_{G,1}^{*}(k,M)\,.
\]
The $k$-algebra $H^{*}(G,k)$ is graded-commutative and $H^{*}(G,M)$ is a graded module over it. Observe that for each $kG$-module $N$, there is a natural isomorphism of graded $H^{*}(G,k)$ modules:
\begin{equation}
\label{eq:relasgc}
\Ext^{*}_{G,1}(M,N) \cong H^{*}(G,\Hom_{k}(M,N))\,.
\end{equation}
Indeed, let $P$ be a $kG$-projective resolution of $k$, so that
$P\otimes_{k}M$ with diagonal $G$-action is a relative projective
resolution of $M$. The isomorphism above is induced by the adjunction
isomorphism
\begin{align*}
\Hom_{kG}(P\otimes_{k}M,N) \cong \Hom_{kG}(P,\Hom_{k}(M,N))\,.
\end{align*}

\subsection*{Supports} 
Let now $k$ be a noetherian commutative ring and $M$ a
$kG$-module. The ring $H^{*}(G,k)$ is then finitely
generated as a $k$-algebra, hence a noetherian ring, and the
$H^{*}(G,k)$-module $H^{*}(G,M)$ is finitely generated whenever $M$ is
finitely generated; see Evens~\cite{Evens:1961a}. From
\eqref{eq:relasgc} one then deduces that $\Ext^{*}_{G,1}(M,N)$ is
finitely generated over $H^{*}(G,k)$ for any pair of finitely
generated $kG$-modules $M$ and $N$.

We write $\mcV_{G}$ for the homogeneous prime ideals of $H^{*}(G,k)$
not containing the ideal of elements of positive degree, $H^{\geqslant
  1}(G,k)$, and for each finitely generated $kG$-module $M$, set
\[
\mcV_G(M) = \{\fp\in \mcV_{G} \mid \Ext^{*}_{G,1}(M,M)_{\fp}\ne 0\}\,.
\] 
This is a closed subset of $\mcV_{G}$ that we call the
\emph{support of $M$}. It coincides with 
\[
\{\fp\in \mcV_{G} \mid \Ext^{*}_{G,1}(M,N)_{\fp}\ne 0 \text{ for some
  $N\in\mod kG$}\}
\]
since the action of $H^*(G,k)$ on $ \Ext^{*}_{G,1}(M,N)$ factors
through that of $\Ext^{*}_{G,1}(M,M)$.

To formulate the next result, we recall that $k$ is \emph{regular} if it is noetherian and each finitely generated $k$-module has a finite projective resolution. For any $k$-module $M$, we write $\Supp_{k}M$ for the subset $\{\fp\in\Spec k\mid M_\fp\ne 0\}$ of $\Spec k$.

\begin{proposition}
\label{prop:supp-gd}
Let $k$ be a regular ring. Given finitely generated $k$-modules $M,N$ with trivial
$G$-action, if $\Supp_{k}M\subseteq\Supp_{k}N$, then $\mcV_G(M)\subseteq \mcV_G(N)$.
\end{proposition}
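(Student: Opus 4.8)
The plan is to realise $\mcV_G(-)$ as a triangulated support datum on $\sfD^b(\mod k)$ and then to invoke the classification of thick subcategories of that category, which is available precisely because $k$ is regular. As a first reduction, note that by \eqref{eq:relasgc} one has $\Ext_{G,1}^*(M,M)\cong H^*(G,\End_k M)$ as graded $H^*(G,k)$-modules, so $\mcV_G(M)=\{\fp\in\mcV_G\mid H^*(G,\End_k M)_\fp\neq 0\}$, and likewise for $N$. As $M$ is finitely generated over the noetherian ring $k$, localization gives $(\End_k M)_\fp\cong\End_{k_\fp}(M_\fp)$ for every prime $\fp$, whence $\Supp_k\End_k M=\Supp_k M$ and similarly $\Supp_k\End_k N=\Supp_k N$; the hypothesis thus reads $\Supp_k\End_k M\subseteq\Supp_k\End_k N$. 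It therefore suffices to prove that, for finitely generated $k$-modules $W,W'$ with trivial $G$-action, the inclusion $\Supp_k W\subseteq\Supp_k W'$ forces $\{\fp\in\mcV_G\mid H^*(G,W)_\fp\neq 0\}\subseteq\{\fp\in\mcV_G\mid H^*(G,W')_\fp\neq 0\}$.

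To see this I would fix a resolution $P\to k$ of the trivial $kG$-module by finitely generated free $kG$-modules — for instance the bar resolution, which is moreover $k$-split — and set $C=\Hom_{kG}(P,k)$. Then $C$ is a bounded-below complex of finitely generated free $k$-modules carrying a DG-algebra structure with $H^*(C)\cong H^*(G,k)$ as graded algebras, and for any $k$-module $W$ with trivial $G$-action the natural isomorphism $\Hom_{kG}(P,W)\cong C\otimes_k W$ identifies $H^*(G,W)$ with $H^*(C\otimes_k W)$ as a graded $H^*(G,k)$-module; this is the point at which triviality of the action and the finite-freeness of each $P^i$ enter. Since $C$ consists of flat $k$-modules, $C\otimes_k(-)$ is an exact functor that preserves quasi-isomorphisms, so the graded $H^*(G,k)$-module $H^*(C\otimes_k X)$ depends only on $X\in\sfD^b(\mod k)$ (viewed with trivial $G$-action), and $C\otimes_k(-)$ carries distinguished triangles to distinguished triangles. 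Extending the notation, put $\mcV_G(X)=\{\fp\in\mcV_G\mid H^*(C\otimes_k X)_\fp\neq 0\}$, which on a module returns the set above. Then $\mcV_G(-)$ is a support datum: it is manifestly stable under suspension and under retracts, and for a triangle $X\to Y\to Z\to\Si X$ the long exact cohomology sequence exhibits each $H^n(C\otimes_k Z)$ as an extension of a submodule of $H^{n+1}(C\otimes_k X)$ by a quotient of $H^n(C\otimes_k Y)$. As the homogeneous support of an $H^*(G,k)$-module contains that of any subquotient, and the support of an extension is contained in the union of those of its sub and quotient, this gives $\mcV_G(Z)\subseteq\mcV_G(X)\cup\mcV_G(Y)$; hence $\mcV_G(X)\subseteq\mcV_G(Y)$ whenever $X$ belongs to the thick subcategory of $\sfD^b(\mod k)$ generated by $Y$.

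Regularity enters only at the last step. Because every finitely generated $k$-module has finite projective dimension, $\sfD^b(\mod k)$ coincides with the category of perfect complexes over the noetherian ring $k$, and by the Hopkins--Neeman thick subcategory theorem \cite{Neeman:1992b} the thick subcategory generated by $W'$ is $\{X\mid\Supp_k X\subseteq\Supp_k W'\}$, the closed set $\Supp_k W'$ being in particular specialisation-closed. Taking $W'=\End_k N$ and $X=\End_k M$, the inclusion $\Supp_k\End_k M\subseteq\Supp_k\End_k N$ puts $\End_k M$ in $\Thick(\End_k N)$, so $\mcV_G(\End_k M)\subseteq\mcV_G(\End_k N)$; by the first paragraph this is exactly $\mcV_G(M)\subseteq\mcV_G(N)$. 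The one step that genuinely requires care is the identification of $H^*(G,W)$ with $H^*(C\otimes_k W)$ and of its $H^*(G,k)$-action: it is this that lets $\mcV_G(-)$ behave in a triangulated manner even though an arbitrary short exact sequence of $k$-modules need not be $k$-split. Granting it, the rest is formal, modulo the cited classification of thick subcategories.
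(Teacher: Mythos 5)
Your argument is correct and takes essentially the same route as the paper: reduce to $\End_k$ via $\Supp_k M=\Supp_k\End_k(M)$ and \eqref{eq:relasgc}, invoke the Hopkins--Neeman thick subcategory theorem (regularity making finitely generated modules perfect), and transport the conclusion through the cohomological behaviour of $H^*(G,-)$; your explicit complex $C=\Hom_{kG}(P,k)$ simply spells out the last step that the paper leaves implicit. One small correction: the thick subcategory theorem you need is the one in \cite{Hopkins:1987a} and \cite{Neeman:1992a}, not \cite{Neeman:1992b}.
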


\begin{proof}
It is not hard to verify that 
\[
\Supp_{k}M= \Supp_{k}\End_{k}(M)\,.
\]
Indeed, as $M$ is finitely generated localisation at a prime $\fp$ induces an isomorphism
\[
\End_{k}(M)_{\fp}\cong \End_{k_{\fp}}(M_{\fp})\,.
\]
Thus, after localisation, we have to verify $M=0$ if and only if $\End_{k}(M)=0$, and this is clear.

Consequently, if $\Supp_{k}M\subseteq\Supp_{k}N$, then $\Supp_{k}\End_{k}(M)\subseteq \Supp_{k}\End_{k}(N)$. Then Hopkins' theorem~\cite[Theorem~11]{Hopkins:1987a} (see also \cite[Theorem~1.5]{Neeman:1992a}) implies that $\End_{k}(M)$ is in the thick subcategory generated by $\End_{k}(N)$, in the derived category of $k$-modules, and hence in the derived category of $kG$-modules; this is where we need the trivial action. It follows from \eqref{eq:relasgc} that $\mcV_G(M)\subseteq \mcV_G(N)$.
\end{proof}

\begin{remark}
  \label{re:bcr-not} 
  When $k$ is a field, assigning a subcategory $\sfC$ of $\stmod (kG)$
  to its support, $\bigcup_{M\in\sfC}\mcV_{G}(M)$, induces a bijection
  between the tensor ideal thick subcategories and the specialisation
  closed subsets of $\mcV_{G}$. This result is due to Benson, Carlson,
  and Rickard~\cite[Theorem~3.4]{Benson/Carlson/Rickard:1997a} for $k$
  algebraically closed, and proved in
  \cite[Theorem~11.4]{Benson/Iyengar/Krause:2011b} in general. It
  follows from Proposition~\ref{pr:thick-tower} that such a
  classification result cannot hold when $k=\bbZ$: Pick a prime $p$
  dividing $|G|$; then $\bbZ/p^{n}$ is not in the tensor ideal thick
  subcategory of $\stmod (\bbZ G)$ generated by $\bbZ/p$ when
  $n>1$. However, we claim that there are equalities
\[ 
\mcV_G(\bbZ/p^{n}) = \mcV_G(\bbZ/p)\quad\text{for all $n\geq 1$.}
\] 
This follows from Proposition~\ref{prop:supp-gd}, since
$\Supp_{\bbZ}(\bbZ/p^{n})=\{(p)\}$ for each $n\geq 1$.
\end{remark}

\section{The homotopy category of weakly injective modules}
\label{se:htpy_category}

In this section we study the homotopy category of weakly injective
modules and a pure variant.  As before, $k$ will be a commutative ring
and $G$ a finite group. An important ingredient of this discussion is
an additional exact structure for the category of $kG$-modules.

Let $\WInj kG$ denote the full subcategory of weakly injective
(equivalently, weakly projective) $kG$-modules, and $\winj kG$ is the
full subcategory of finitely presented modules in $\WInj kG$.

\subsection*{Exact structures} 

Recall that a sequence $0\to M'\to M\to M''\to 0$ in $\Mod (kG)$ is
$k$-split exact if it is split exact when restricted to the trivial
subgroup.  The sequence is \emph{$k$-pure exact} if it is
pure exact when restricted to the trivial subgroup.

\begin{lemma} 
For an exact sequence  $\eta\colon 0\to M'\to M\to M''\to 0$ of
$kG$-modules, the following conditions are equivalent:
\begin{enumerate}
\item The sequence is $k$-pure exact.
\item For every finitely presented weakly injective $kG$-module $P$,
each morphism $P\to M''$ lifts to a morphism $P\to M$.
\end{enumerate}
\end{lemma}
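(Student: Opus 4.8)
The plan is to prove the equivalence by relating $k$-pure exactness of $\eta$ to a lifting property for finitely presented modules, using the trace-map machinery from Section~\ref{se:weak things} to pass between finitely presented $kG$-modules and finitely presented \emph{weakly injective} $kG$-modules of the form $N{\ua^G}$.

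\medskip\noindent\textbf{(1) $\Ra$ (2).}
Suppose $\eta$ is $k$-pure exact, and let $P$ be a finitely presented weakly injective $kG$-module together with a morphism $\alpha\colon P\to M''$. Restricting $\eta$ to the trivial subgroup gives a pure exact sequence of $k$-modules; since $P$ is finitely presented over $k$ as well (a finitely presented $kG$-module is finitely presented over $k$ because $kG$ is finitely generated free over $k$), the underlying $k$-linear map $\alpha$ lifts to a $k$-linear map $\widetilde\alpha\colon P\to M$ by the defining property of pure exactness recalled in Definition~\ref{defn:purity}. Now I would apply Higman's criterion, Theorem~\ref{th:wp=wi}(5): there is $\theta\in\End_k(P)$ with $\Tr_G(\theta)=\id_P$. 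Then $\Tr_G(\widetilde\alpha\theta)\colon P\to M$ is a $kG$-linear map, and composing with $M\to M''$ and using Lemma~\ref{le:tr-ideal} together with $k$-linearity of the quotient map shows it lifts $\Tr_G(\alpha\theta)=\alpha\Tr_G(\theta)=\alpha$. Hence $\alpha$ lifts to a $kG$-morphism $P\to M$.

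\medskip\noindent\textbf{(2) $\Ra$ (1).}
Conversely, assume the lifting property in (2). Restricting $\eta$ to the trivial subgroup, I must show the resulting sequence $0\to M'\to M\to M''\to 0$ of $k$-modules is pure exact, i.e.\ every $k$-linear map $Q\to M''$ from a finitely presented $k$-module $Q$ lifts to $M$. Given such $Q$ and $\beta\colon Q\to M''$, form the induced module $Q{\ua^G}=kG\otimes_k Q$, which is finitely presented as a $kG$-module and weakly injective by Theorem~\ref{th:wp=wi}. The $kG$-morphism $\pi_{M''}\circ(kG\otimes_k\beta)\colon Q{\ua^G}\to M''{\ua^G}\to M''$ — or more directly the $kG$-morphism $Q{\ua^G}\to M''$ sending $g\otimes q$ to $g\cdot(\text{lift of }\beta(q))$ built via Theorem~\ref{th:trace}(3) from the $k$-linear $\beta$ — lifts to a $kG$-morphism $Q{\ua^G}\to M$ by hypothesis (2). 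Precomposing with the $k$-split (in particular $k$-linear) injection $\iota_Q\colon Q\to Q{\ua^G}$, $\iota_Q(q)=\sum_{g}g\otimes g^{-1}q$, and tracing through the identifications shows that the resulting $k$-linear map $Q\to M$ lifts $\beta$: indeed $\pi_{M''}\iota_{M''}=|G|$ is not quite the identity, so instead one should arrange the $kG$-map $Q{\ua^G}\to M''$ to be $\rho''$ from the proof of Theorem~\ref{th:trace}, namely $\rho''(g\otimes q)=g\beta(q)$, which satisfies $\rho''\iota_Q=\Tr_G(\beta)$; its chosen $kG$-lift $\widetilde{\rho''}\colon Q{\ua^G}\to M$ then gives $\widetilde{\rho''}\iota_Q\colon Q\to M$ as a $k$-lift of $\Tr_G(\beta)$. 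This proves pure exactness after replacing $\beta$ by $\Tr_G(\beta)$; since every $k$-linear map $Q\to M''$ arises this way up to the standard argument (or: it suffices to check the lifting property on a generating set and $\Tr_G$ of the components of a $k$-splitting recovers identities), one concludes $\eta$ is $k$-pure exact.

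\medskip\noindent\textbf{Main obstacle.}
The delicate point is the $(2)\Ra(1)$ direction: one must produce a \emph{$k$-linear} lift of an arbitrary $k$-linear $\beta\colon Q\to M''$ while only being handed a lifting property for \emph{$kG$-linear} maps out of weakly injective modules. The bridge is the adjunction $\Hom_{kG}(Q{\ua^G},-)\cong\Hom_k(Q,-)$ combined with Higman's criterion, exactly as in the proof of Theorem~\ref{th:trace}: a $k$-linear map $Q\to M''$ corresponds to a $kG$-linear map $Q{\ua^G}\to M''$, lifting the latter along $M\to M''$ and restricting along $Q\to Q{\ua^G}$ recovers a $k$-linear lift of (a trace twist of) the original. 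Making sure the trace twists cancel — using that $Q{\ua^G}$ carries $\theta_Q$ with $\Tr_G(\theta_Q)=\id$, Lemma~\ref{le:tr-ind} — is the only real bookkeeping, and it is entirely parallel to the proof of the implication $(3)\Ra(1)$ in Theorem~\ref{th:wp=wi} and to the proof of Lemma~\ref{le:cw-perp}.
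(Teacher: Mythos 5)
Your implication (1)~$\Rightarrow$~(2) is fine: a finitely presented $kG$-module is finitely presented over $k$, so $k$-purity gives a $k$-linear lift $\widetilde\alpha$, and twisting by Higman's element $\theta$ with $\Tr_G(\theta)=\id_P$ (Theorem~\ref{th:wp=wi}) produces the $kG$-linear lift $\Tr_G(\widetilde\alpha\theta)$, exactly as in the proof of (5)~$\Rightarrow$~(1) there.

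The direction (2)~$\Rightarrow$~(1), however, has a genuine gap. You lift the $kG$-map $\rho''\colon Q{\ua^G}\to M''$, $\rho''(g\otimes q)=g\beta(q)$, and then restrict the lift along $\iota_Q$. Since $\rho''\iota_Q=\Tr_G(\beta)$, this only shows that maps in the image of the trace $\Tr_G\colon\Hom_k(Q,M'')\to\Hom_{kG}(Q,M'')$ lift, and your closing claim that ``every $k$-linear map $Q\to M''$ arises this way up to the standard argument'' is not true: the image of $\Tr_G$ consists of $kG$-linear maps and is in general a proper submodule of $\Hom_k(Q,M'')$ (for trivial actions it lies in $|G|\cdot\Hom_k(Q,M'')$, so for instance with $k=\bbZ$ and $|G|=p$ almost nothing is reached). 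The trace factor does not cancel and there is no ``generating set'' argument that recovers arbitrary $\beta$. The repair is to restrict along the other canonical map: the $k$-linear section $u\colon Q\to Q{\ua^G}$, $u(q)=1\otimes q$, of $\pi_Q$, which is the unit of the adjunction $\Hom_{kG}(kG\otimes_kQ,-)\cong\Hom_k(Q,-)$. Then $\rho''u=\beta$ on the nose, so if $\widetilde{\rho''}\colon Q{\ua^G}\to M$ is the $kG$-lift provided by (2) (note $Q{\ua^G}$ is finitely presented and weakly injective), the $k$-linear map $\widetilde{\rho''}u$ lifts $\beta$ itself, giving $k$-purity by the lifting criterion of Definition~\ref{defn:purity} over $k$. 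This is precisely the paper's argument: it invokes the natural isomorphism $\Hom_{kG}(kG\otimes_kQ,-)\cong\Hom_k(Q,-)$ applied to $M\to M''$, together with the observation that every finitely presented weakly injective module is a direct summand of some $Q{\ua^G}$ with $Q$ a finitely presented $k$-module (which also streamlines your (1)~$\Rightarrow$~(2)).
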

\begin{proof} 
  Recall from Definition~\ref{defn:purity} that $\eta$ is a pure exact
  sequence of $kG$-modules if and only if for every finitely presented
  $kG$-module $Q$, each morphism $Q\to M''$ lifts to a morphism $Q\to
  M$.
  Next observe that each finitely presented weakly injective
  $kG$-module is a direct summand of $Q\ua^G=kG\otimes_k Q$ for some finitely
  presented $k$-module $Q$. Applying 
\[\Hom_{kG}(kG\otimes_k Q,-)\cong\Hom_k(Q,-)\]
to the morphism $M\to M''$ yields the assertion.
\end{proof}

For any additive category $\sfC$ we write $\sfK(\sfC)$ for the
homotopy category of cochain complexes in $\sfC$. As usual,
$\sfK^{b}(\sfC)$ is the full subcategory of bounded complexes.

For a specified exact structure on $\sfC$, a complex $X=(X^n,d^n)$ is
said to be \emph{acyclic} if there are exact sequences
\[ 
0\to Z^n\xra{u^n} X^n\xra{v^n} Z^{n+1}\to 0
\] 
in $\sfC$ such that $d^n=u^{n+1}v^n$ for all $n\in\mathbb Z$. We
say  $X$ is \emph{acyclic in almost all degrees} if such exact
sequences exist for almost all $n$.

\subsection*{The bounded derived category}

We consider the category of finitely presented $kG$-modules with the
$k$-split exact structure and define the corresponding \emph{bounded
derived category} as the Verdier quotient
\[ \sfD^b(\mod kG)=\sfK^b(\mod kG)/\sfK_\ac^b(\mod kG),
\] where $\sfK_\ac^b(\mod kG)$ is the full subcategory of acyclic
complexes, with respect to the $k$-split structure, in $\sfK^b(\mod
kG)$.  The natural functor
\begin{equation}
\label{eq:D^b} \sfK^{+,b}(\winj kG)\lto \sfD^b(\mod kG)
\end{equation} is an equivalence of triangulated categories; its
inverse identifies a complex with its weakly injective
resolution. Here $\sfK^{+,b}(\winj kG)$ is the full subcategory of
$\sfK(\winj kG)$ of complexes $X$ with $X^{n}=0$ for
$n\ll 0$ and $X$ acyclic in almost all degrees.

\begin{proposition}
\label{pr:stmod} The composite functor
\[ \mod (kG)\rightarrowtail\sfD^b(\mod kG)\twoheadrightarrow\sfD^b(\mod
kG)/\sfK^b(\winj kG)
\] induces an equivalence of triangulated categories
\[ \stmod (kG)\stackrel{\sim}\lto\sfD^b(\mod kG)/\sfK^b(\winj kG).
\]
\end{proposition}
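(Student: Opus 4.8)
The plan is to exploit the fact that the stable module category $\stmod(kG)$ is the quotient of $\mod(kG)$ by the ideal of maps factoring through a weakly injective (equivalently weakly projective) module, and to recognise the right-hand side as the ``same'' quotient performed inside the triangulated category $\sfD^b(\mod kG)$. Concretely, I would first recall the equivalence \eqref{eq:D^b}, which identifies $\sfD^b(\mod kG)$ with $\sfK^{+,b}(\winj kG)$ by taking weakly injective resolutions. Under this identification, the full triangulated subcategory $\sfK^b(\winj kG)$ is the image of the bounded complexes of (finitely presented) weakly injectives. The first step is then to observe that the composite $\mod(kG)\rightarrowtail\sfD^b(\mod kG)\twoheadrightarrow\sfD^b(\mod kG)/\sfK^b(\winj kG)$ sends every weakly injective module to zero: a weakly injective finitely presented module is its own weakly injective resolution and hence lies in $\sfK^b(\winj kG)$. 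Therefore the composite factors through $\stmod(kG)$, giving an exact functor $F\colon\stmod(kG)\to\sfD^b(\mod kG)/\sfK^b(\winj kG)$.

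The second step is to construct a quasi-inverse, or at least to verify $F$ is fully faithful and essentially surjective directly. For essential surjectivity I would argue that every object of $\sfD^b(\mod kG)$ is, up to the subcategory $\sfK^b(\winj kG)$, isomorphic to a single module placed in degree zero: starting from a bounded complex $X$ with weakly injective resolution concentrated in degrees $\le 0$, one truncates, using that the ``syzygies'' in a weakly injective resolution are again finitely presented $kG$-modules (the kernels/cokernels appearing in the acyclic-in-almost-all-degrees condition are finitely presented since $\mod kG$ is abelian and the $P^i$ are finitely presented), and that the bounded-above tail is killed in the quotient. So every object of the quotient comes from a module, i.e.\ from an object of $\stmod(kG)$ via $F$.

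The third and main step is full faithfulness of $F$, and this is where I expect the real work to lie. One must compute $\Hom$ in the Verdier quotient $\sfD^b(\mod kG)/\sfK^b(\winj kG)$ between two modules $M,N$ and match it with $\sHom_{kG}(M,N)$. The standard approach is a calculus-of-fractions argument: a morphism $M\to N$ in the quotient is represented by a roof $M\leftarrow X\to N$ in $\sfD^b(\mod kG)$ whose left leg has cone in $\sfK^b(\winj kG)$. Using the weakly injective resolution picture, one shows such a roof can be rectified so that $X$ differs from $M$ by a bounded complex of weakly injectives, and then that the resulting map $M\to N$ in $\sfD^b$, modulo the identifications, is precisely a module map modulo those factoring through a weakly injective — which by Lemma~\ref{le:stable0} and Theorem~\ref{th:trace} is exactly $\PHom_{kG}(M,N)$. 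The key technical ingredient is that $\mod(kG)$ with the $k$-split exact structure is a Frobenius category whose projective-injective objects are the weakly injectives, so that $\stmod(kG)$ is its stable category; the statement then becomes an instance of the general fact that for a Frobenius category $\mcE$ the stable category $\underline{\mcE}$ is equivalent to $\sfD^b(\mcE)/\sfK^b(\mathsf{proj}\,\mcE)$. I would either cite this general principle (it goes back to Rickard and appears in work of Keller and others) or reprove it in the present situation by the fraction argument sketched above; the latter is where one must be careful, since $\sfK^b(\winj kG)$ is generated by objects that need not be compact in $\sfK(\winj kG)$, so one checks by hand that the quotient $\Hom$-sets are computed by roofs with the left leg of the stated form.
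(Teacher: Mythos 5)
Your proposal is essentially the paper's own proof: the paper disposes of this proposition by saying ``adapt the proof for group algebras over a field, see Theorem~2.1 of Rickard'', and the adaptation it has in mind is precisely the Frobenius-category argument you sketch (the composite kills $\winj kG$, brutal truncation of weakly injective resolutions gives essential surjectivity, and a roof/fraction computation identifies morphisms in the quotient with $\sHom_{kG}(M,N)$). One small repair: your parenthetical justification that the syzygies are finitely presented ``since $\mod (kG)$ is abelian'' is not valid over an arbitrary commutative ring $k$ (where $kG$ need not be coherent); the correct reason, available in your own setup, is that the resolutions are $k$-split, so each syzygy is a $k$-direct summand of a finitely presented module, hence finitely presented over $k$ and therefore over $kG$.
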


\begin{proof} Adapt the proof for group algebras over a field; see
Theorem~2.1 in \cite{Rickard:1989a}.
\end{proof}

Given bounded complexes $X,Y$ of finitely presented $kG$-modules and $n\in\bbZ$, set
\[\Ext^n_{G,1}(X,Y)=\Hom_{\sfD^b(\mod kG)}(X,Y[n]).\] This notation is
consistent with our previous definition \eqref{eq:ext}.

\subsection*{The pure derived category}

The \emph{pure derived category} of $\Mod (kG)$ is by definition the
following Verdier quotient
\[ \sfD_\pur(\Mod kG)=\sfK(\Mod kG)/\sfK_{\kpac}(\Mod kG),
\] where $\sfK_{\kpac}(\Mod kG)$ denotes the full subcategory
consisting of complexes that are acyclic with respect to the $k$-pure
exact structure.

\begin{proposition}\label{pr:derived} The derived category
$\sfD_\pur(\Mod kG)$ is a compactly generated triangulated
category. The inclusion $\winj kG\to \Mod (kG)$ induces an equivalence
of triangulated categories between $\sfK^b(\winj kG)$ and the full
subcategory of compact objects in $\sfD_\pur(\Mod kG)$.
\end{proposition}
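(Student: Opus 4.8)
The plan is to realize $\sfD_\pur(\Mod kG)$ as a quotient of a well-understood compactly generated category and to apply Proposition~\ref{pr:compact}. First I would recall that $\sfK(\Mod kG)$ is a compactly generated triangulated category: its compact objects are (up to direct summands and isomorphism) the bounded complexes of finitely presented $kG$-modules, i.e.\ $\sfK(\Mod kG)^\sfc \simeq \sfK^b(\mod kG)$. This is the standard fact that the homotopy category of an additive category with countable coproducts is compactly generated by the finitely presented objects, concentrated in a single degree, once one works with $kG$ a coherent (here even noetherian when $k$ is, but coherence is not needed) ring so that finitely presented modules are closed under kernels of maps; more elementarily, one uses that $\Mod(kG)$ is locally finitely presented and cites the description of compacts in $\sfK(\Mod R)$ for any ring $R$. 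Then I would observe that $\sfK_{\kpac}(\Mod kG) = \sfK_{\mathrm{pac}}(\Mod kG)$ as triangulated subcategories of $\sfK(\Mod kG)$: a complex is acyclic for the $k$-pure structure if and only if it is acyclic for the honest $kG$-pure structure, by the lemma just above (finitely presented weakly injective $kG$-modules are summands of $kG\otimes_k Q$ with $Q$ finitely presented $k$-module, and $\Hom_{kG}(kG\otimes_k Q,-)\cong\Hom_k(Q,-)$, so the lifting conditions agree). Hence $\sfD_\pur(\Mod kG) = \sfK(\Mod kG)/\sfK_{\mathrm{pac}}(\Mod kG)$.

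Next I would identify $\sfK_{\mathrm{pac}}(\Mod kG)$ with the perpendicular category $\sfC^\perp$ for $\sfC=\{$compact generators$\}$. The key point is that a complex $X$ is $\mathrm{pac}$ (pure acyclic) exactly when $\Hom_{\sfK(\Mod kG)}(\Si^n Q, X)=0$ for every finitely presented $kG$-module $Q$ and every $n\in\bbZ$: the $\Hom$ in the homotopy category out of a module $Q$ placed in degree $0$ computes (roughly) the "pure cohomology" of $X$, and its vanishing in all degrees and all shifts says precisely that $X$ is pure acyclic; more precisely one uses that $\Hom_{\sfK(\Mod kG)}(Q, X[n])$ is computed from $\Hom_{kG}(Q, X^\bullet)$, whose exactness in all degrees is one of the equivalent formulations of pure exactness of the short exact sequences built from $X$. (Here I would be careful about the difference between $\Hom$ in $\sfK$ and $\Hom$ of complexes, but the degreewise lifting criterion for purity in Definition~\ref{defn:purity} makes this translation routine.) Therefore $\sfK_{\mathrm{pac}}(\Mod kG) = \sfC^\perp$ with $\sfC = \mod(kG)$ viewed as complexes in degree $0$. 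Now Proposition~\ref{pr:compact} applied to $\sfT=\sfK(\Mod kG)$ and this $\sfC$ gives at once that $\sfD_\pur(\Mod kG) = \sfT/\sfC^\perp$ is compactly generated, and that the composite $\Loc(\sfC)\rightarrowtail\sfT\twoheadrightarrow\sfD_\pur(\Mod kG)$ is an equivalence identifying $\Thick(\sfC)$ with the compacts.

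Finally I would identify the compact objects concretely as $\sfK^b(\winj kG)$. By the previous paragraph the compacts of $\sfD_\pur(\Mod kG)$ are $\Thick_{\sfK(\Mod kG)}(\mod kG)$, i.e.\ the thick subcategory of $\sfK(\Mod kG)$ generated by finitely presented modules in degree $0$; this is visibly $\sfK^b(\mod kG)$. So the claim reduces to showing that the composite $\sfK^b(\winj kG)\to \sfK^b(\mod kG)\to \sfD_\pur(\Mod kG)$ is an equivalence onto the compacts, equivalently that the functor $\sfK^b(\winj kG)\to \sfD_\pur(\Mod kG)$ is fully faithful and has essential image all of $\sfK^b(\mod kG)/(\sfK^b(\mod kG)\cap\sfK_{\mathrm{pac}})$. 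Essential surjectivity: every bounded complex of finitely presented modules admits a $\mathrm{pac}$-quasi-isomorphism from a bounded complex of finitely presented weakly injectives — one builds a "weakly injective resolution" degreewise using $\pi_Q\colon Q\ua^G\to Q$, exactly as for the equivalence \eqref{eq:D^b}, but now only requiring $k$-pure exactness of the augmented complex rather than $k$-split exactness; boundedness is preserved because $kG$ has finite weak (relative) global dimension issues do not arise — one truncates. Full faithfulness: a bounded complex of weakly injective modules is "$\mathrm{pac}$-closed," meaning $\Hom_{\sfK(\Mod kG)}(W, X)=0$ whenever $W\in\sfK^b(\winj kG)$ and $X\in\sfK_{\mathrm{pac}}(\Mod kG)$ — this is because weakly injective modules extend along $k$-pure monomorphisms in each degree (finitely presented-tested purity plus $\Hom_{kG}(kG\otimes_k Q,-)\cong\Hom_k(Q,-)$), so a standard degreewise lifting/homotopy argument kills maps into pure acyclic complexes; hence morphisms in $\sfD_\pur$ out of $\sfK^b(\winj kG)$ are computed in $\sfK(\Mod kG)$ with no denominators. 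I expect the main obstacle to be precisely this last full-faithfulness step — getting the "weakly injective complexes are right-orthogonal to pure acyclic complexes" statement with the correct handling of homotopies and the boundedness constraints — rather than the formal quotient manipulations, which are immediate from Proposition~\ref{pr:compact}.
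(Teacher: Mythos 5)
The central step of your argument—namely that $\sfK_{\kpac}(\Mod kG)=\sfK_{\pac}(\Mod kG)$, so that one may take $\sfC=\mod (kG)$ (in degree zero) as the set of compact generators—is false, and the lemma you cite does not say this. That lemma characterises \emph{$k$-pure} exactness of a sequence of $kG$-modules by the lifting property against finitely presented \emph{weakly injective} modules only; lifting against \emph{all} finitely presented $kG$-modules is $kG$-purity, which is strictly stronger. Concretely, let $k$ be a field of characteristic $p$ dividing $|G|$. Then every exact sequence of $kG$-modules is $k$-split, hence $k$-pure, so $\sfK_{\kpac}(\Mod kG)$ is the class of all acyclic complexes and $\sfD_\pur(\Mod kG)$ is the ordinary derived category; but $\sfK_{\pac}(\Mod kG)$ is strictly smaller, since $kG$ is not von Neumann regular. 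Your route would then give $\Thick(\mod kG)\simeq\sfK^b(\mod kG)$, i.e.\ all of $\sfD^b(\mod kG)$, as the compacts, whereas the compacts of the ordinary derived category are the perfect complexes, which in this case is exactly $\sfK^b(\winj kG)$; so carrying out your plan would contradict the very statement being proved. The later ``essential surjectivity'' step inherits the error: in the same example it asserts that every bounded complex of finitely generated $kG$-modules receives a map with pure acyclic (in particular acyclic) cone from a bounded complex of projective-injective modules, i.e.\ is perfect, which fails whenever $kG$ has infinite global dimension.

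The repair, which is the paper's (very short) proof, is to choose the generators correctly: take $\sfC=\winj kG$, viewed as complexes concentrated in degree zero. Each such $P$ is a direct summand of $kG\otimes_kQ$ with $Q$ a finitely presented $k$-module, and the isomorphism $\Hom_{kG}(kG\otimes_kQ,-)\cong\Hom_k(Q,-)$ shows both that $P$ is compact in $\sfK(\Mod kG)$ and, together with the unnumbered lemma on $k$-pure exactness, that $(\winj kG)^\perp=\sfK_{\kpac}(\Mod kG)$. A single application of Proposition~\ref{pr:compact} then yields that $\sfD_\pur(\Mod kG)=\sfK(\Mod kG)/(\winj kG)^\perp$ is compactly generated with compact objects $\Thick(\winj kG)=\sfK^b(\winj kG)$; no resolution or separate full-faithfulness argument for $\sfK^b(\winj kG)$ is needed. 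Note finally that the distinction you erased between the $k$-pure and $kG$-pure structures is not a technicality: the strict inclusion $\sfK_\pac(\WInj kG)\subsetneq\sfK_{\kpac}(\WInj kG)$, whose quotient is $\StMod^\cw(kG)$ by Lemma~\ref{le:StMod}, is precisely what makes the recollement of Section~\ref{se:htpy_category} nontrivial.
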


\begin{proof} Consider the triangulated category $\sfK(\Mod kG)$ and
identify each $kG$-module with the corresponding complex concentrated
in degree zero.  Then each object in $\winj kG$ is compact in
$\sfK(\Mod kG)$ and
\begin{equation*} (\winj kG)^\perp=\sfK_{\kpac}(\Mod kG).
\end{equation*} Now apply Proposition~\ref{pr:compact}.
\end{proof}

\begin{lemma}
\label{le:derived-inj} The composite $\sfK(\WInj
kG)\rightarrowtail\sfK(\Mod kG)\twoheadrightarrow\sfD_\pur(\Mod kG)$
induces an equivalence of triangulated categories
\[ \sfK(\WInj kG)/\sfK_{\kpac}(\WInj kG)\stackrel{\sim}\lto
\sfD_\pur(\Mod kG).
\]
\end{lemma}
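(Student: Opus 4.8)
The functor under consideration is simply the restriction to $\sfK(\WInj kG)$ of the Verdier quotient $\sfK(\Mod kG)\twoheadrightarrow\sfD_\pur(\Mod kG)$, so the plan is to deduce the statement from Corollary~\ref{cor:compact}. I would take $\sfT=\sfK(\Mod kG)$ and $\sfC=\winj kG$. As recorded in the proof of Proposition~\ref{pr:derived}, each finitely presented weakly injective module is compact in $\sfK(\Mod kG)$ and $\sfV:=\sfC^{\perp}=\sfK_{\kpac}(\Mod kG)$, whence $\sfT/\sfV=\sfD_\pur(\Mod kG)$. Thus, once the hypotheses of Corollary~\ref{cor:compact} are verified, it will produce exactly the equivalence we want.

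First I would check that $\sfK(\WInj kG)$ is a localising subcategory of $\sfK(\Mod kG)$ that contains $\Loc(\winj kG)$. It is triangulated, since the cone of a morphism of complexes of weakly injective modules is again a complex of weakly injective modules; and it is closed under set-indexed coproducts, because $\WInj kG$ is closed under coproducts — one has $kG\otimes_k\bigl(\bigoplus_i Q_i\bigr)\cong\bigoplus_i(kG\otimes_k Q_i)$, and a direct summand of a weakly injective module is weakly injective. Hence $\sfK(\WInj kG)$ is localising, and it plainly contains $\winj kG$, hence $\Loc(\winj kG)$. Corollary~\ref{cor:compact} then yields an equivalence
\[
\sfK(\WInj kG)\big/\bigl(\sfV\cap\sfK(\WInj kG)\bigr)\xra{\ \sim\ }\sfD_\pur(\Mod kG),
\]
induced by $\sfK(\WInj kG)\rightarrowtail\sfK(\Mod kG)\twoheadrightarrow\sfD_\pur(\Mod kG)$, which is the functor in the statement.

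It then remains to identify $\sfV\cap\sfK(\WInj kG)=\sfK_{\kpac}(\Mod kG)\cap\sfK(\WInj kG)$ with $\sfK_{\kpac}(\WInj kG)$, i.e.\ to observe that a complex of weakly injective $kG$-modules is acyclic for the $k$-pure exact structure exactly when it lies in $\sfK_{\kpac}(\Mod kG)$. I expect this last bookkeeping — pinning down that the relevant exact structure on $\WInj kG$ is the one restricted from the $k$-pure exact structure on $\Mod kG$, so that the two notions of ``$k$-pure acyclic'' agree on complexes of weakly injectives — together with the closure of $\WInj kG$ under coproducts and summands used above, to be the only points needing attention; neither is a serious obstacle. (One could instead argue directly, constructing a $k$-pure weakly injective resolution of every complex to get essential surjectivity and then checking full faithfulness, but routing the argument through Corollary~\ref{cor:compact} is cleaner and parallels the proof of Proposition~\ref{pr:derived}.)
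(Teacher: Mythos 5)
Your argument is correct and is precisely the paper's: the proof there simply cites Corollary~\ref{cor:compact}, applied with $\sfT=\sfK(\Mod kG)$, $\sfC=\winj kG$ and $\sfT'=\sfK(\WInj kG)$, using the facts recorded in the proof of Proposition~\ref{pr:derived} that these objects are compact and $(\winj kG)^\perp=\sfK_{\kpac}(\Mod kG)$. The closure and bookkeeping checks you spell out (coproducts of weakly injectives, and $\sfK_{\kpac}(\Mod kG)\cap\sfK(\WInj kG)=\sfK_{\kpac}(\WInj kG)$) are exactly what the paper leaves implicit.
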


\begin{proof} This is an immediate consequence of
Corollary~\ref{cor:compact}.
\end{proof}

\begin{remark} 
  When $k$ is a field, every exact sequence of $k$-modules is pure
  exact, so $\sfD_\pur(\Mod kG)$ is the usual derived category of
  $kG$-modules. On the other extreme, if $G$ is trivial,
  $\sfD_\pur(\Mod kG)$ is the pure derived category of $\Mod (k)$
  studied in \cite{Krause:2011a}.
\end{remark}

\subsection*{The pure homotopy category}

The \emph{pure homotopy category} of $\WInj kG$ is by definition the
following Verdier quotient
\[ \sfK_\pur(\WInj kG)=\sfK(\WInj kG)/\sfK_\pac(\WInj kG)
\] where $\sfK_{\pac}(\WInj kG)$ denotes the full subcategory
consisting of complexes that are acyclic with respect to the pure
exact structure.

\begin{proposition}
\label{pr:KInj} The homotopy category $\sfK_\pur(\WInj kG)$ is
compactly generated and the natural functor \eqref{eq:D^b} induces an
equivalence of triangulated categories between the full subcategory of
compact objects in $\sfK_\pur(\WInj kG)$ and $\sfD^b(\mod kG)$.
\end{proposition}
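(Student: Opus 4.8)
The plan is to apply Proposition~\ref{pr:compact} to the triangulated category $\sfT=\sfK(\WInj kG)$, which has set-indexed coproducts because $\WInj kG$ is closed under them (a coproduct of direct summands of induced modules is again a direct summand of an induced module). As the generating set $\sfC$ I would take a set of representatives for the isomorphism classes of the weakly injective resolutions $iQ$ of the finitely presented $kG$-modules $Q$. Since $\sfD^b(\mod kG)$ is generated as a triangulated category by $\mod kG$, the functor~\eqref{eq:D^b} shows that $\sfK^{+,b}(\winj kG)$ is generated as a triangulated category by the $iQ$ and, being equivalent to $\sfD^b(\mod kG)$, is thick in $\sfK(\WInj kG)$; hence $\Thick(\sfC)=\sfK^{+,b}(\winj kG)\simeq\sfD^b(\mod kG)$, the equivalence being the one induced by~\eqref{eq:D^b}. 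It then remains to check that (a) each $iQ$ is compact in $\sfK(\WInj kG)$, and (b) $\sfC^\perp=\sfK_\pac(\WInj kG)$; granting these, Proposition~\ref{pr:compact} gives that $\sfK(\WInj kG)/\sfC^\perp=\sfK_\pur(\WInj kG)$ is compactly generated with $\Thick(\sfC)\simeq\sfD^b(\mod kG)$ as its subcategory of compact objects, which is the assertion.

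Both (a) and (b) follow from the identification, natural in $X\in\sfK(\WInj kG)$,
\[ \Hom_{\sfK(\WInj kG)}(iQ,X[n])\;\cong\;H^n\bigl(\Hom_{kG}(Q,X^\bullet)\bigr). \]
To prove it, regard $Q$ as a complex in degree zero; the cone $C$ of the augmentation $Q\to iQ$ is a bounded-below complex of weakly injective modules that is acyclic for the $k$-split exact structure. For every weakly injective module $W$ the functor $\Hom_{kG}(-,W)$ is exact on $k$-split exact sequences, so each complex $\Hom_{kG}(C^\bullet,X^q)$ is acyclic; filtering the Hom-complex $\Hom^\bullet_{kG}(C^\bullet,X^\bullet)$ by the $X$-degree then yields $\Hom_{\sfK(\WInj kG)}(C,X[n])=0$ for all $n$, whence $\Hom_{\sfK(\WInj kG)}(iQ,X[n])\cong\Hom_{\sfK(\Mod kG)}(Q,X[n])=H^n(\Hom_{kG}(Q,X^\bullet))$. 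With this, (a) is immediate: finite presentation of $Q$ gives $\Hom_{kG}(Q,\coprod_\lambda Y_\lambda^\bullet)=\coprod_\lambda\Hom_{kG}(Q,Y_\lambda^\bullet)$, and cohomology commutes with coproducts in $\Mod k$, so $\Hom_{\sfK(\WInj kG)}(iQ,\coprod_\lambda Y_\lambda)=\coprod_\lambda\Hom_{\sfK(\WInj kG)}(iQ,Y_\lambda)$. For (b), the identification shows that $X\in\sfC^\perp$ iff $\Hom_{kG}(Q,X^\bullet)$ is acyclic for every finitely presented $Q$; taking $Q=kG$ shows $X$ is acyclic, and for general $Q$ this is exactly the condition (Definition~\ref{defn:purity}) that the cycle sequences of $X$ be pure exact over $kG$, i.e.\ $X\in\sfK_\pac(\WInj kG)$.

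The step I expect to be the main obstacle is the vanishing $\Hom_{\sfK(\WInj kG)}(C,X[n])=0$ used above, i.e.\ that a bounded-below complex of weakly injectives which is acyclic for the $k$-split structure is right orthogonal in $\sfK(\WInj kG)$ to every complex of weakly injectives. The point is that $X$ may be unbounded on both sides, so one cannot simply invoke that $C$, being a bounded-below acyclic complex of injective objects, is contractible (it is only $k$-split acyclic, not split); instead one must work at the level of the Hom-complex, note that its $X$-degree filtration is exhaustive, Hausdorff and complete (this is where boundedness below of $C$ enters), that the $E_1$-page is computed by the complexes $\Hom_{kG}(C^\bullet,X^q)$ — acyclic by weak injectivity of the $X^q$ — and then deduce acyclicity of the total complex from the convergence theorem for complete filtrations. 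Once this is established the remainder is formal, via Proposition~\ref{pr:compact}.
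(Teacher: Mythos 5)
Your proposal is correct and is essentially the paper's own argument: the paper also takes the weakly injective resolutions $\bsi M$ of the finitely presented modules as compact generators, records the isomorphism $\Hom_{\sfK(\Mod kG)}(\bsi M,X)\cong\Hom_{\sfK(\Mod kG)}(M,X)$ (citing \cite[Lemma~2.1]{Krause:2005a} for the verification you carry out directly with the filtration argument), identifies the perpendicular with $\sfK_\pac(\WInj kG)$, and applies Proposition~\ref{pr:compact}. The only slip is cosmetic: the cone $C$ of $Q\to iQ$ is not a complex of weakly injectives (the term $Q$ need not be weakly injective), so the vanishing $\Hom(C,X[n])=0$ should be read in $\sfK(\Mod kG)$; your argument only uses $k$-split acyclicity of $C$ and weak injectivity of the components of $X$, so it goes through unchanged.
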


\begin{proof} Choose for each $M\in\mod (kG)$ a weakly injective
resolution $\bsi M$. Arguing as in the proof of
\cite[Lemma~2.1]{Krause:2005a}, one can verify that the morphism $M\to
\bsi M$ induces, for each $X\in\sfK(\WInj kG)$, an isomorphism
\[ \Hom_{\sfK(\Mod kG)}(\bsi M,X)\xra{\sim}\Hom_{\sfK(\Mod
kG)}(M,X)\,.
\] Thus each $\bsi M$ is compact in $\sfK(\WInj kG)$ and
\begin{equation*} \{\bsi M\mid M\in\mod (kG)\}^\perp=\sfK_\pac(\WInj
kG).
\end{equation*} Now apply Proposition~\ref{pr:compact}.
\end{proof}

\subsection*{Recollements}

A \emph{recollement} is by definition a diagram of exact functors
\begin{equation*} \xymatrix{\sfT'\,\ar[rr]|-I&&\,\sfT\,
\ar[rr]|-Q\ar@<1.25ex>[ll]^-{I_\lambda}\ar@<-1.25ex>[ll]_-{I_\rho}&&
\,\sfT''\ar@<1.25ex>[ll]^-{Q_\lambda}\ar@<-1.25ex>[ll]_-{Q_\rho} }
\end{equation*} satisfying the following conditions, where $\sfS$
denotes the full subcategory consisting of objects $X\in\sfT$
satisfying $Q(X)=0$.
\begin{enumerate}
\item The functors $(I_\lambda,I)$, $(I,I_\rho)$, $(Q_\lambda,Q)$,
$(Q,Q_\rho)$ form adjoint pairs.
\item The functor $I$ induces an equivalence $\sfT'\xra{\sim}\sfS$.
\item The functor $Q$ induces an equivalence
$\sfT/\sfS\xra{\sim}\sfT''$.
\end{enumerate}

\begin{proposition}
\label{pr:recoll} Let $\sfT$ be a triangulated category with
set-indexed products and coproducts.  Fix a pair of essentially small
thick subcategories $\sfC$ and $\sfD$ consisting of compact objects
such that $\sfD\subseteq\sfC$. Then there is an induced recollement of
compactly generated triangulated categories
  \[
\xymatrix{(\sfD^\perp)/(\sfC^\perp)\,\ar[rr]|-I&&\,\sfT/(\sfC^\perp)\,
\ar[rr]|-Q\ar@<1.25ex>[ll]\ar@<-1.25ex>[ll]&&
\,\sfT/(\sfD^\perp)\ar@<1.25ex>[ll]\ar@<-1.25ex>[ll] }.
 \] Restricting the left adjoints of $I$ and $Q$ to the full
subcategories of compact objects yields up to equivalence and
idempotent completion an sequence of exact functors:
\[ \xymatrix{\sfC/\sfD\ar@{<<-}[r]&\sfC\ar@{<-<}[r]&\,\sfD}.
\]
\end{proposition}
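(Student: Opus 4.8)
The plan is to assemble the recollement out of three applications of Proposition~\ref{pr:compact} together with the standard machinery of Bousfield localisation for compactly generated triangulated categories. First I would record that, since $\sfC$ consists of compact objects, the localising subcategory $\Loc(\sfC)$ is the kernel of a Bousfield localisation of $\sfT$, and the quotient $\sfT/\Loc(\sfC)\simeq\sfC^\perp$ exists as a legitimate (locally small) triangulated category; likewise for $\sfD$. I would then set $\sfS=\sfD^\perp/\sfC^\perp$ inside $\sfT/\sfC^\perp$ and identify $\sfT/\sfC^\perp$ modulo $\sfS$ with $\sfT/\sfD^\perp$. Concretely, by Proposition~\ref{pr:compact} the three categories $\sfT/\sfC^\perp$, $\sfT/\sfD^\perp$, and (the quotient computing) $\sfD^\perp/\sfC^\perp$ are each compactly generated, with compact objects $\Thick(\sfC)=\sfC$, $\Thick(\sfD)=\sfD$, and $\sfC/\sfD$ respectively — here one uses that $\sfC$ and $\sfD$ are already thick. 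The functor $I$ is the inclusion induced by $\sfD^\perp\hookrightarrow\sfT$, and $Q$ is the quotient functor $\sfT/\sfC^\perp\twoheadrightarrow\sfT/\sfD^\perp$ (well-defined because $\sfC^\perp\subseteq\sfD^\perp$, as $\sfD\subseteq\sfC$).

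Next I would produce the six functors and verify the three recollement axioms. Axiom (3), that $Q$ induces an equivalence $(\sfT/\sfC^\perp)/\sfS\xra{\sim}\sfT/\sfD^\perp$, is immediate from the isomorphism theorem for Verdier quotients applied to $\sfC^\perp\subseteq\sfD^\perp\subseteq\sfT$. Axiom (2), that $I$ identifies $\sfD^\perp/\sfC^\perp$ with the kernel $\sfS$ of $Q$, amounts to checking that an object of $\sfT/\sfC^\perp$ lies in $\sfS$ exactly when it comes from $\sfD^\perp$; this is a diagram chase using that $\sfD^\perp$ is, up to the localisation, precisely the preimage of the kernel of $\sfT\to\sfT/\sfD^\perp$. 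For the adjoints in axiom (1): the localisation $\sfT\to\sfT/\sfC^\perp$ has both a fully faithful left adjoint (with image $^\perp(\sfC^\perp)=\Loc(\sfC)$) and, since the relevant perpendicular subcategories are again generated by compacts, a right adjoint; the same holds for $\sfT\to\sfT/\sfD^\perp$ and for $Q$ itself. Standard Bousfield localisation theory (e.g. Neeman~\cite{Neeman:2001a}) guarantees each of these adjoints because the kernels are generated by compact objects, so coproducts are preserved and Brown representability applies. I would spell out the adjunctions $(I_\lambda,I)$, $(I,I_\rho)$, $(Q_\lambda,Q)$, $(Q,Q_\rho)$ explicitly in terms of these localisations and their adjoints.

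Finally, for the statement about compact objects, I would restrict the left adjoints $I_\lambda$ and $Q_\lambda$ to compacts. A left adjoint between compactly generated categories that preserves compactness does so precisely when its right adjoint preserves coproducts, which holds here since $I$ and $Q$ are (up to equivalence) quotient functors that preserve coproducts. Hence $Q_\lambda$ restricts to an exact functor $\sfD=(\sfT/\sfD^\perp)^{\sfc}\to(\sfT/\sfC^\perp)^{\sfc}=\sfC$, which under the identifications is the inclusion $\sfD\hookrightarrow\sfC$; and $I_\lambda$ restricts to $\sfC=(\sfT/\sfC^\perp)^{\sfc}\to(\sfD^\perp/\sfC^\perp)^{\sfc}=\sfC/\sfD$, which is the Verdier quotient functor. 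Because passing to compact objects of a Verdier quotient only recovers the quotient up to idempotent completion (Neeman~\cite{Neeman:1992b}), the resulting sequence $\sfC/\sfD\xla{}\sfC\xla{}\sfD$ is as asserted only up to equivalence and idempotent completion. The main obstacle I anticipate is the bookkeeping in axiom (2): carefully matching the kernel $\sfS$ of $Q$ with the image of $I$ requires chasing an object through two successive localisations and invoking the interplay $^\perp(\sfC^\perp)\cap\sfD^\perp$, so I would isolate this as a lemma and handle it with the standard ``recollement from a localisation sequence of localising subcategories'' argument rather than by hand.
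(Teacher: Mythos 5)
Your overall skeleton is the paper's: Verdier's theorem identifies the kernel of the quotient $Q\colon\sfT/(\sfC^\perp)\to\sfT/(\sfD^\perp)$ with $(\sfD^\perp)/(\sfC^\perp)$, Brown representability supplies the adjoints, and Neeman's localisation theorem accounts for the compacts up to idempotent completion. The genuine gap is in your treatment of axiom (1). You propose to manufacture the adjoints of $I$ and $Q$ out of adjoints of the localisations $\sfT\to\sfT/(\sfC^\perp)$ and $\sfT\to\sfT/(\sfD^\perp)$, and you justify the existence of their right adjoints ``because the kernels are generated by compact objects''. The kernels of these two functors are $\sfC^\perp$ and $\sfD^\perp$, which are not generated by compact objects of $\sfT$; and since $\sfT$ is only assumed to have set-indexed products and coproducts (it is not assumed compactly or well generated), Brown representability is not available for functors defined on $\sfT$, so a right adjoint to $\sfT\to\sfT/(\sfC^\perp)$ need not exist in this generality. (What does exist is the fully faithful left adjoint with image ${}^\perp(\sfC^\perp)=\Loc(\sfC)$, precisely because $\Loc(\sfC)$ is compactly generated.) A related omission: you only discuss preservation of coproducts, but the left adjoints $I_\lambda$ and $Q_\lambda$ --- exactly the functors you must restrict to compact objects in the second half of the statement --- come from the dual form of Brown representability, which requires $I$ and $Q$ to preserve products; nothing in your proposal addresses this.

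The repair is short and is how the paper argues. The subcategories $\sfC^\perp$ and $\sfD^\perp$ are closed in $\sfT$ under set-indexed products (since $\Hom_\sfT(X,-)$ always preserves products) and under coproducts (since the objects of $\sfC$ and $\sfD$ are compact); hence $I$ and $Q$ preserve set-indexed products and coproducts. Their sources are compactly generated: $\sfT/(\sfC^\perp)\simeq\Loc(\sfC)$ by Proposition~\ref{pr:compact}, and $(\sfD^\perp)/(\sfC^\perp)\simeq\Loc(\sfC)\cap\sfD^\perp\simeq\Loc(\sfC)/\Loc(\sfD)$ by Neeman's localisation theorem \cite{Neeman:1992b} --- note that Proposition~\ref{pr:compact} alone does not give this, contrary to your attribution, and it is this theorem that also produces the compacts $\sfC/\sfD$ up to idempotent completion. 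Brown representability and its dual, applied to these compactly generated sources, then yield all four adjoints of $I$ and $Q$ directly, with no need for adjoints to functors defined on $\sfT$ itself. The remainder of your proposal (the Verdier identification of the kernel, the fact that a left adjoint of a coproduct-preserving functor preserves compactness, and the idempotent completion caveat) matches the paper and is fine.
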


\begin{proof} We have a pair of localising subcategories
$\sfC^\perp\subseteq\sfD^\perp$ which induce a quotient functor
$\sfT/(\sfC^\perp)\twoheadrightarrow \sfT/(\sfD^\perp)$ with kernel
$(\sfD^\perp)/(\sfC^\perp)$; see Proposition~2.3.1 of Chap.~II in
\cite{Verdier:1996a}.  The categories $\sfC^\perp$ and $\sfD^\perp$
form subcategories of $\sfT$ that are closed under set-indexed
products and coproducts. Thus $I$ and $Q$ preserve set-indexed
products and coproducts. It follows from Brown representability that
$I$ and $Q$ both have left and right adjoints.

A left adjoint of a coproduct preserving functor preserves
compactness. The description of the categories of compact objects
follows from Proposition~\ref{pr:compact}.
\end{proof}

\subsection*{A recollement} We apply Proposition~\ref{pr:recoll}: Let
$\sfT=\sfK(\WInj kG)$ and set
\[ \sfC=\sfK^{+,b}(\winj kG)\quad\text{and}\quad \sfD=\sfK^b(\winj
kG)\,.
\] Note that
\[ \sfC^\perp=\sfK_\pac(\WInj kG)\quad\text{and}\quad
\sfD^\perp=\sfK_{\kpac}(\WInj kG)\,.
\] The inclusion $\sfK_\pac(\WInj kG)\rightarrowtail
\sfK_{\kpac}(\WInj kG)$ induces the following commutative diagram of
exact functors.
\[ \xymatrix{ \sfK_\pac(\WInj
  kG)\ar@{>->}[d]\ar@{=}[r]&\sfK_\pac(\WInj kG) \ar@{>->}[d]\\
  \sfK_{\kpac}(\WInj kG) \,\ar@{>->}[r]\ar@{->>}[d]& \sfK(\WInj kG)
  \ar@{->>}[r]\ar@{->>}[d]&\sfD_\pur(\Mod kG) \ar@{=}[d]\\ \StMod^\cw
  (kG)\, \ar@{>->}[r]^-I&\sfK_\pur(\WInj kG)
  \ar@{->>}[r]^-Q&\sfD_\pur(\Mod kG)}
\] The occurrence of the stable module category in this diagram is
explained by the following lemma.

\begin{lemma}
\label{le:StMod} Taking a complex of $kG$-modules to its cycles in
degree zero induces an equivalence of triangulated categories
\[ \sfK_{\kpac}(\WInj kG)/\sfK_\pac(\WInj
kG)\stackrel{\sim}\lto\StMod^\cw (kG).
\]
\end{lemma}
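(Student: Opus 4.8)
The plan is to realise the stated equivalence as the quasi-inverse of the complete-resolution functor, verified on compact objects via Lemma~\ref{le:equivalence}. For a $kG$-module $M$ write $\mathbf c M$ for a complete $k$-split resolution: the splice of a weakly injective resolution $\bsi M$ of $M$ with a resolution of $M$ by weakly projective modules. Thus $\mathbf c M$ is a $k$-split acyclic complex of weakly injective modules with $Z^0(\mathbf c M)=M$; it is the usual complete resolution in the Frobenius category $(\Mod(kG),\ k\text{-split})$, unique up to homotopy equivalence, a morphism $M\to N$ lifts to a chain map $\mathbf c M\to\mathbf c N$ unique up to homotopy, and $\mathbf c P$ is contractible when $P$ is weakly injective. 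Consequently $M\mapsto\mathbf c M$ takes $k$-split exact sequences to triangles and kills $\PHom_{kG}$, hence descends to an exact functor $\StMod(kG)\to\sfK(\WInj kG)$; it lands in $\sfK_{\kpac}(\WInj kG)$ because $k$-split acyclic complexes are $k$-pure acyclic. Moreover, if $Q$ is cw-injective then $\iota_Q$ and $\pi_Q$ are pure morphisms of $kG$-modules by Theorem~\ref{th:cw-injective}, so $\mathbf c Q$ may be assembled entirely from pure exact sequences and therefore lies in $\sfK_\pac(\WInj kG)$. Hence $\mathbf c$ induces an exact, coproduct-preserving functor
\[
\Psi\colon\StMod^\cw(kG)\lra\sfK_{\kpac}(\WInj kG)/\sfK_\pac(\WInj kG).
\]

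Next I would produce compact generators on both sides. The target is compactly generated with compact objects $\stmod(kG)$, by Theorem~\ref{th:stmodcw}. For the source I would show that $\{\mathbf c M\mid M\in\mod(kG)\}$ is a set of compact objects of $\sfK_{\kpac}(\WInj kG)$ with $\{\mathbf c M\}^\perp=\sfK_\pac(\WInj kG)$. Stupid truncation places $\mathbf c M$ in a triangle with $\bsi M$ and a bounded-above complex $\mathbf p'M$ of finitely presented weakly injective modules. Each term of $\mathbf p'M$ has the form $kG\otimes_k(-)$ for a finitely presented, hence pure-projective, $k$-module, so $\mathbf p'M$ has terms that are projective for the $k$-pure exact structure; consequently $\Hom_{\sfK(\WInj kG)}(\mathbf p'M,Y)=0$ for every $k$-pure acyclic $Y$. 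Since $\bsi M$ is compact in $\sfK(\WInj kG)$ with $\{\bsi M\}^\perp=\sfK_\pac(\WInj kG)$ (Proposition~\ref{pr:KInj}), the triangle gives that $\mathbf c M$ is compact and that $\Hom_{\sfK(\WInj kG)}(\mathbf c M,Y)\cong\Hom_{\sfK(\WInj kG)}(\bsi M,Y)$ for $Y\in\sfK_{\kpac}(\WInj kG)$, whence $\{\mathbf c M\}^\perp=\sfK_\pac(\WInj kG)$ inside $\sfK_{\kpac}(\WInj kG)$. Proposition~\ref{pr:compact} now shows that $\sfK_{\kpac}(\WInj kG)/\sfK_\pac(\WInj kG)$ is compactly generated with compact objects $\Thick(\{\mathbf c M\})$, and that the composite $\Loc(\{\mathbf c M\})\hookrightarrow\sfK_{\kpac}(\WInj kG)\twoheadrightarrow\sfK_{\kpac}(\WInj kG)/\sfK_\pac(\WInj kG)$ is an equivalence.

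To apply Lemma~\ref{le:equivalence} it then remains to check that $\Psi$ restricts to an equivalence on compact objects. On objects $\Psi(M)=[\mathbf c M]$; on morphisms, for finitely presented $M,N$ the induced map is the composite
\[
\sHom_{kG}(M,N)\xra{\ \mathbf c\ }\Hom_{\sfK(\WInj kG)}(\mathbf c M,\mathbf c N)\lra\Hom_{\sfK_{\kpac}/\sfK_\pac}([\mathbf c M],[\mathbf c N]),
\]
and both arrows are bijective: the first because lifts exist and are unique up to homotopy while complete resolutions of weakly injective modules are contractible, so $Z^0$ inverts it; the second because $\mathbf c M,\mathbf c N\in\Loc(\{\mathbf c M\})$, on which the quotient functor is fully faithful. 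Thus $\Psi$ restricts to an equivalence $\stmod(kG)\xra{\sim}\Thick(\{\mathbf c M\})$, and Lemma~\ref{le:equivalence} makes $\Psi$ an equivalence. Its inverse is taking degree-zero cycles: for $[X]$ in the quotient, the representative $\Gamma X\in\Loc(\{\mathbf c M\})$ supplied by Proposition~\ref{pr:compact} is $k$-split acyclic, hence a complete resolution of $Z^0(\Gamma X)$, so $\Psi(Z^0(\Gamma X))=[\mathbf c\,Z^0(\Gamma X)]\cong[\Gamma X]=[X]$; and $Z^0$ is at least well defined as an additive functor $\sfK_{\kpac}(\WInj kG)\to\StMod^\cw(kG)$ killing $\sfK_\pac(\WInj kG)$, since a null-homotopic chain map induces on $Z^0$ a map through a weakly injective term, and the cycles of a pure acyclic complex of weakly injective modules are pure submodules of cw-injective modules, hence cw-injective by Corollary~\ref{co:cw-injective}.

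I expect the main obstacle to be the final identification: that the inverse of $\Psi$ is genuinely computed by $Z^0$ on an arbitrary $X\in\sfK_{\kpac}(\WInj kG)$, not only on the $k$-split acyclic representative $\Gamma X$. Concretely, for the colocalisation triangle $\Gamma X\to X\to LX\to$ with $LX\in\sfK_\pac(\WInj kG)$ one must know that $Z^0(\Gamma X)\to Z^0(X)$ is an isomorphism in $\StMod^\cw(kG)$; equivalently, that the cycle functor $Z^0$ is exact. This hinges on understanding how the canonical $k$-pure exact sequences $0\to Z^nX\to X^n\to Z^{n+1}X\to 0$, whose middle terms are weakly injective, behave in $\StMod^\cw(kG)$ once $\sfK_\pac(\WInj kG)$ has been inverted; everything else is the standard Frobenius-category and compact-generation bookkeeping assembled above.
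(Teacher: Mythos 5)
Your construction is correct, and it is essentially the paper's proof run in the opposite direction with the standard bookkeeping re-derived by hand. The paper argues in three lines: the classical Frobenius-category equivalence $Z^0\colon \sfK_\ac(\WInj kG)\xra{\ \sim\ }\StMod (kG)$, where $\sfK_\ac$ denotes the complexes acyclic for the $k$-split structure (its quasi-inverse is exactly your functor $\mathbf{c}$), identifies $\sfK_\pac(\WInj kG)\cap\sfK_\ac(\WInj kG)$ with the cw-injective modules, i.e.\ with $(\stmod kG)^\perp$ by Lemma~\ref{le:perp}; this gives $\sfK_\ac(\WInj kG)/(\sfK_\pac(\WInj kG)\cap\sfK_\ac(\WInj kG))\simeq\StMod^\cw (kG)$, and Corollary~\ref{cor:compact} applied to the inclusion $\sfK_\ac(\WInj kG)\subseteq\sfK_{\kpac}(\WInj kG)$ upgrades this to the stated equivalence. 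Your steps --- that $\{\mathbf{c}M\mid M\in\mod (kG)\}$ is a set of compact objects of $\sfK_{\kpac}(\WInj kG)$ with perpendicular $\sfK_\pac(\WInj kG)$ (via the truncation triangle against $\bsi M$ and the vanishing of maps out of bounded-above complexes of $k$-pure projectives into $k$-pure acyclic complexes), that $\mathbf{c}$ kills cw-injectives (Theorem~\ref{th:cw-injective} plus Corollary~\ref{co:cw-injective}), and the check on compacts followed by Lemma~\ref{le:equivalence} --- are all sound, but they amount to re-proving Corollary~\ref{cor:compact} in this special case; the paper's route is shorter because it quotes that corollary and the known equivalence for $\sfK_\ac$ instead of rebuilding them.

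The obstacle you flag at the end is not a gap in the lemma as the paper proves and uses it: the equivalence is the one determined on $k$-split acyclic representatives, and every object of the quotient is isomorphic to such a representative (your $\Gamma X$); the paper's proof likewise never applies $Z^0$ outside $\sfK_\ac(\WInj kG)$. If you want the literal statement that $Z^0$ inverts morphisms with cone in $\sfK_\pac(\WInj kG)$, the essential point is that a $k$-pure exact sequence $0\to W\to D\xra{\,q\,}C\to 0$ with $W$ weakly injective has its epimorphism become invertible in $\StMod^\cw (kG)$. Indeed, the cone of $q$ in $\StMod (kG)$ is $D\ua^G/\iota_D(W)$, which sits in a $k$-pure exact sequence $0\to W\to D\ua^G\to D\ua^G/\iota_D(W)\to 0$; for any finitely presented $N$ and any morphism $N\to D\ua^G/\iota_D(W)$, the pulled-back sequence is $k$-pure with cokernel finitely presented over $k$, hence $k$-split, hence split over $kG$ since $W$ is weakly injective, so the morphism lifts to $D\ua^G$ and the cone is cw-injective by Lemma~\ref{le:cw-perp}(1). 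Pushing the cycle sequences $0\to Z^nX\to X^n\to Z^{n+1}X\to 0$ of an object $X\in\sfK_{\kpac}(\WInj kG)$ out along $\iota_{Z^nX}$ and applying this observation makes them exact triangles in $\StMod^\cw (kG)$, which is exactly the input your final identification was missing.
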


\begin{proof} Denote by $\sfK_\ac(\WInj kG)$ the category of complexes
that are acyclic with respect to the $k$-split exact structure. Taking
cycles in degree zero induces an equivalence of triangulated
categories
\[ \sfK_\ac(\WInj kG)\stackrel{\sim}\lto\StMod (kG).
\] This functor identifies $\sfK_\pac(\WInj kG)\cap \sfK_\ac(\WInj
kG)$ with the category of cw-injective $kG$-modules by
Lemma~\ref{le:perp}, and induces therefore an equivalence
\[ \sfK_{\ac}(\WInj kG)/(\sfK_\pac(\WInj kG)\cap \sfK_\ac(\WInj
kG))\stackrel{\sim}\lto\StMod^\cw (kG).
\] Now observe that
\[\sfK_\ac(\WInj kG)\subseteq \sfK_{\kpac}(\WInj kG)
\] and apply Corollary~\ref{cor:compact}.
\end{proof}

The following result establishes the analogue of Proposition~6.1 in
\cite{Benson/Krause:2008a} for general rings of coefficients.

\begin{theorem} The functors $I$ and $Q$ induce a recollement
\[ \xymatrix{\StMod^\cw (kG)\,\ar[rr]|-I&&\,\sfK_\pur(\WInj kG)\,
\ar[rr]|-Q\ar@<1.25ex>[ll]\ar@<-1.25ex>[ll]&& \,\sfD_\pur(\Mod
kG)\ar@<1.25ex>[ll]\ar@<-1.25ex>[ll] }.
\] Restricting the left adjoints of $I$ and $Q$ to the full
subcategories of compact objects yields up to equivalence the
following sequence
\[ \xymatrix{ \stmod (kG)\ar@{<<-}[r]&\sfD^b(\mod kG)
\ar@{<-<}[r]&\,\sfK^b(\winj kG).}
\]
\end{theorem}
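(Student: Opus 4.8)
The plan is to invoke Proposition~\ref{pr:recoll} directly. We have arranged that $\sfT=\sfK(\WInj kG)$ is a triangulated category with set-indexed products and coproducts (products and coproducts of weakly injective modules are again weakly injective, since the class $\WInj kG$ is closed under such by Theorem~\ref{th:wp=wi}), and we have specified the two thick subcategories $\sfD=\sfK^b(\winj kG)\subseteq \sfK^{+,b}(\winj kG)=\sfC$. The first thing to check is that $\sfC$ and $\sfD$ consist of compact objects in $\sfT$: compactness of $\sfD$ is part of Proposition~\ref{pr:derived}, and compactness of $\sfC$ is established in the proof of Proposition~\ref{pr:KInj}, where one shows that each weakly injective resolution $\bsi M$ of a finitely presented module is compact in $\sfK(\WInj kG)$, and $\sfC$ is generated by these. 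Given this, Proposition~\ref{pr:recoll} produces a recollement
\[
\xymatrix{(\sfD^\perp)/(\sfC^\perp)\,\ar[rr]|-I&&\,\sfT/(\sfC^\perp)\,
\ar[rr]|-Q\ar@<1.25ex>[ll]\ar@<-1.25ex>[ll]&&
\,\sfT/(\sfD^\perp)\ar@<1.25ex>[ll]\ar@<-1.25ex>[ll] }
\]
together with the identification of the compact objects of the three terms with the idempotent completion of $\sfC/\sfD$, $\sfC$, and $\sfD$ respectively.

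Next I would identify each of the three categories in this abstract recollement with the named category in the statement. For the middle term, $\sfT/(\sfC^\perp)=\sfK(\WInj kG)/\sfK_\pac(\WInj kG)=\sfK_\pur(\WInj kG)$ by the definition of the pure homotopy category and the equality $\sfC^\perp=\sfK_\pac(\WInj kG)$ recorded just before the statement (which in turn comes from the perpendicularity computation in the proof of Proposition~\ref{pr:KInj}). For the right-hand term, $\sfT/(\sfD^\perp)=\sfK(\WInj kG)/\sfK_{\kpac}(\WInj kG)$, which is $\sfD_\pur(\Mod kG)$ by Lemma~\ref{le:derived-inj}; here one uses the identification $\sfD^\perp=\sfK_{\kpac}(\WInj kG)$. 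For the left-hand term, $(\sfD^\perp)/(\sfC^\perp)=\sfK_{\kpac}(\WInj kG)/\sfK_\pac(\WInj kG)$, which is precisely the content of Lemma~\ref{le:StMod}: taking cycles in degree zero gives an equivalence onto $\StMod^\cw (kG)$. Threading these three equivalences through the recollement, and checking that under them the functors $I$ and $Q$ become the inclusion $\StMod^\cw (kG)\to\sfK_\pur(\WInj kG)$ and the quotient $\sfK_\pur(\WInj kG)\to\sfD_\pur(\Mod kG)$ appearing in the commutative diagram preceding the statement, yields the asserted recollement; the compatibility is immediate from the fact that all the identifications are induced by the evident inclusion and quotient functors, so the relevant squares commute.

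Finally, for the description of the compact objects: Proposition~\ref{pr:recoll} tells us that restricting the left adjoints of $I$ and $Q$ to compact objects gives, up to equivalence and idempotent completion, the sequence $\sfC/\sfD\xla{}\sfC\xla{}\sfD$. Now $\sfD=\sfK^b(\winj kG)$ already, and $\sfC=\sfK^{+,b}(\winj kG)$ is equivalent to $\sfD^b(\mod kG)$ via the weakly injective resolution functor~\eqref{eq:D^b}. It remains to identify $\sfC/\sfD$ with $\stmod (kG)$: this is exactly Proposition~\ref{pr:stmod}, once one transports it along the equivalence $\sfC\simeq \sfD^b(\mod kG)$, which carries $\sfD=\sfK^b(\winj kG)$ onto the copy of $\sfK^b(\winj kG)$ inside $\sfD^b(\mod kG)$. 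One should also remark that these three quotients are already idempotent complete — each is a thick subcategory of a compactly generated category, so this is automatic — so ``up to idempotent completion'' can be dropped. The main obstacle, such as it is, is purely bookkeeping: one must keep straight the three layers $\sfK_\pac\subseteq\sfK_{\kpac}\subseteq\sfK(\WInj kG)$ of exact structures and be careful that the perpendicular computations $\sfC^\perp=\sfK_\pac(\WInj kG)$ and $\sfD^\perp=\sfK_{\kpac}(\WInj kG)$ are computed inside $\sfK(\WInj kG)$ rather than inside $\sfK(\Mod kG)$; once the diagram preceding the statement is in place this is routine, and no genuinely new argument is needed beyond assembling Propositions~\ref{pr:recoll}, \ref{pr:stmod}, \ref{pr:derived}, \ref{pr:KInj} and Lemmas~\ref{le:derived-inj}, \ref{le:StMod}.
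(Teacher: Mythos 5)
Your proposal is correct and follows essentially the same route as the paper: apply Proposition~\ref{pr:recoll} to $\sfT=\sfK(\WInj kG)$ with $\sfD=\sfK^b(\winj kG)\subseteq\sfC=\sfK^{+,b}(\winj kG)$, identify the three terms via Lemma~\ref{le:derived-inj} and Lemma~\ref{le:StMod}, and describe the compacts via Propositions~\ref{pr:stmod}, \ref{pr:derived} and \ref{pr:KInj}. Your extra remarks (compactness of $\sfC$ from the proof of Proposition~\ref{pr:KInj}, the perpendiculars being computed inside $\sfK(\WInj kG)$, and dropping the idempotent completion because $\sfC/\sfD\simeq\stmod(kG)$ is already idempotent complete) are exactly the points the paper leaves implicit, and they are handled correctly.
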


\begin{proof} We apply Proposition~\ref{pr:recoll}.  The functor $Q$
is a quotient functor by Lemma~\ref{le:derived-inj}, and its kernel
identifies with $\sfK_{\kpac}(\WInj kG)/\sfK_\pac(\WInj kG)$; see
Proposition~2.3.1 of Chap.~II in \cite{Verdier:1996a}.  The
description of the kernel as stable module category follows from
Lemma~\ref{le:StMod}.  The description of the compact objects follows
from Propositions~\ref{pr:stmod}, \ref{pr:derived}, and \ref{pr:KInj}.
\end{proof}

\begin{remark} 
\label{rem:adjoints}
The adjoints of $I$ and $Q$ in this recollement admit explicit
descriptions. More precisely, $I_\lambda=\bst k\otimes_k-$,
$I_\rho=\Hom_k(\bst k,-)$, $Q_\lambda=\bsp k\otimes_k-$,
$Q_\rho=\Hom_k(\bsp k,-)$.  Here, we write $\bsp k$ for a weakly
projective resolution, and $\bst k$ for a Tate resolution of the
trivial $kG$-module $k$; see \cite[\S6]{Benson/Krause:2008a}.
\end{remark}

\appendix

\section{Disconnecting spectra via localisation sequences}
\label{ap:appendix}
\begin{center}
{\sc by  Greg Stevenson}
\end{center}

Let $\sfS$ be an essentially small tensor triangulated category, for
instance the stable module category $\stmod (kG)$ of a group algebra
$kG$. In \cite{BaSpec} Balmer has associated to $\sfS$ a spectral
topological space $\Spc \sfS$ which provides a universal notion of
tensor-compatible supports for objects of $\sfS$. We briefly recall
the definition of $\Spc \sfS$. A proper tensor ideal (our tensor
ideals are always assumed to be thick) $\sfP$ of $\sfS$ is
\emph{prime} if $s\otimes s' \in \sfP$ implies $s\in \sfP$ or $s'\in
\sfP$ for all $s,s' \in \sfS$. The \emph{spectrum} of $\sfS$ is then
defined to be the set $\Spc \sfS$ of prime tensor ideals of $\sfS$
endowed with the \emph{Zariski topology} which is given by the basis
of closed subsets
\[ \{\supp_\sfS s = \{\sfP \in \Spc \sfS \mid s\notin \sfP\} \mid s\in
\sfS\}.
\] The subset $\supp_\sfS s$ is called the \emph{support} of the
object $s$. More generally one can define the support of any
subcategory $\sfC \subseteq \sfS$ by
\[ \supp_\sfS \sfC = \bigcup_{c\in \sfC} \supp_\sfS c.
\] The spectrum is contravariantly functorial with respect to strong
monoidal exact functors \cite[Proposition~3.6]{BaSpec} i.e., given a
strong monoidal exact functor $i_*\colon\sfR \to \sfS$ there is a
spectral map $\Spc(i_*)\colon \Spc \sfS \to \Spc \sfR$.

We show that if a tensor triangulated category $\sfS$ admits a
semiorthogonal decomposition given by a pair of tensor ideals then the
spectrum of $\sfS$ is disconnected. In the remainder of this section,
$\sfS$ is an essentially small tensor triangulated category, with unit
object $\one$, and
\[ \xymatrix{ \sfR \ar[r]<0.5ex>^-{i_*} \ar@{<-}[r]<-0.5ex>_-{i^!} &
\sfS \ar[r]<0.5ex>^-{j^*} \ar@{<-}[r]<-0.5ex>_-{j_*} & \sfT }
\] is a localisation sequence such that both $\sfR $ and $\sfT $ are
tensor ideals of $\sfS$, when viewed as subcategories via the
embeddings $i_*$ and $j_*$. We recall that this diagram being a
\emph{localisation sequence}, or \emph{semiorthogonal decomposition},
means that $i_*$ and $j_*$ are fully faithful exact functors such that
$i^!$ is right adjoint to $i_*$ and $j^*$ is left adjoint to
$j_*$. Moreover, $(i_*\sfR)^\perp=j_*\sfT$ and
$i_*\sfR={^\perp(j_*\sfT)}$.

In general one must ask that both $i_*\sfR$ and $j_*\sfT$ are tensor
ideals; $i_*\sfR$ being an ideal does not necessarily imply $j_*\sfT$
is an ideal and vice versa. However, there is a special case in which
these implications hold and we feel it is worth making explicit. We
say the tensor triangulated category $\sfS$ is \emph{rigid} if the
symmetric monoidal structure on $\sfS$ is closed, with internal hom
$\fHom(-,-)$ and, setting $s^\vee = \fHom(s,\one)$ for $s\in
\sfS$, the natural morphism
\[ s^\vee \otimes s' \to \fHom(s,s')
\] is an isomorphism for all $s,s'\in \sfS$. For further details the
interested reader can consult Section~1 of \cite{Deligne/Milne:1982}
in the general case and Appendix~A of \cite{HPS} for further details
in the triangulated case.

\begin{lemma}
\label{lem_rigid_ideal} Let $\sfS$ be an essentially small rigid
tensor triangulated category. If $\sfR$ is a tensor ideal of $\sfS$,
then so are the thick subcategories $\sfR ^{\perp}$ and
${}^\perp\sfR$.
\end{lemma}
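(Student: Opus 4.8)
The plan is to derive both statements from the tensor--hom adjunction together with the rigidity isomorphism. Recall that a closed symmetric monoidal structure supplies a natural isomorphism $\Hom_\sfS(a\otimes b,c)\cong\Hom_\sfS(a,\fHom(b,c))$, and that rigidity gives $\fHom(b,c)\cong b^\vee\otimes c$ for all $b,c$; consequently $(-)^\vee$ is an anti-equivalence of $\sfS$ with $s^{\vee\vee}\cong s$, so that $-\otimes s^\vee$ is simultaneously a left and a right adjoint of $-\otimes s$. I will also use that, in a tensor triangulated category, $-\otimes s$ is exact, hence commutes with $\Si$, and that it preserves $\sfR$ whenever $\sfR$ is a tensor ideal. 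Since $\sfR^{\perp}$ and ${}^\perp\sfR$ are automatically thick, it remains in each case only to check closure under tensoring with an arbitrary object of $\sfS$.

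For $\sfR^{\perp}$: fix $Y\in\sfR^{\perp}$ and $s\in\sfS$; I must show $\Hom_\sfS(\Si^n X,s\otimes Y)=0$ for every $X\in\sfR$ and $n\in\bbZ$. Using commutativity of $\otimes$ and the biadjunction of $-\otimes s$ and $-\otimes s^\vee$,
\[ \Hom_\sfS(\Si^n X,\,s\otimes Y)\;\cong\;\Hom_\sfS(\Si^n X\otimes s^\vee,\,Y)\;\cong\;\Hom_\sfS\big(\Si^n(X\otimes s^\vee),\,Y\big). \]
Now $X\otimes s^\vee\in\sfR$ because $\sfR$ is a tensor ideal, so the last group vanishes by definition of $\sfR^{\perp}$. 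Hence $s\otimes Y\in\sfR^{\perp}$, and $\sfR^{\perp}$ is a thick tensor ideal.

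For ${}^\perp\sfR$: fix $X\in{}^\perp\sfR$ and $s\in\sfS$; I must show $\Hom_\sfS(s\otimes X,\Si^n Y)=0$ for every $Y\in\sfR$ and $n\in\bbZ$. The tensor--hom adjunction and rigidity give
\[ \Hom_\sfS(s\otimes X,\,\Si^n Y)\;\cong\;\Hom_\sfS\big(X,\,\fHom(s,\Si^n Y)\big)\;\cong\;\Hom_\sfS\big(X,\,\Si^n(s^\vee\otimes Y)\big), \]
and $s^\vee\otimes Y\in\sfR$ since $\sfR$ is a tensor ideal, so this vanishes; thus $s\otimes X\in{}^\perp\sfR$.

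No genuine obstacle is expected: the argument is entirely formal. The only point requiring attention is the careful use of rigidity --- in particular the identification $s^{\vee\vee}\cong s$ underlying the biadjunction invoked in the $\sfR^{\perp}$ case --- together with the routine facts that $-\otimes s$ commutes with suspension and preserves the tensor ideal $\sfR$; all of these are standard properties of rigid tensor triangulated categories, for which one may consult Appendix~A of \cite{HPS}.
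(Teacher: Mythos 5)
Your argument is correct and is essentially the paper's own: both proofs reduce the ideal property of the perpendicular categories to the tensor--hom adjunction combined with the rigidity identification $\fHom(s,-)\cong s^\vee\otimes(-)$ and the fact that $(-)^\vee$ is a contravariant equivalence (equivalently, the biadjunction between $-\otimes s$ and $-\otimes s^\vee$ you invoke). The paper merely states this more tersely, treating the ${}^\perp\sfR$ case as ``similarly trivial,'' whereas you spell out both cases explicitly.
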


\begin{proof} It is clear from the adjunction between the tensor
product and internal hom that $\sfR^\perp$ is closed under the
functors $\fHom(s,-)$ for $s\in \sfS$. As $\fHom(s,-) \cong
s^\vee\otimes(-)$ and the duality $(-)^\vee$ is a contravariant
equivalence it is thus immediate that $\sfR^\perp$ is an ideal as
claimed. The proof that ${}^\perp\sfR$ is an ideal is similarly
trivial.
\end{proof}

We now return to the setting where $\sfS$ is only assumed to be tensor
triangulated and commence the proof of the promised result on
disconnectedness of the spectrum. Recall that an object $s\in \sfS$ is
said to be $\otimes$-\emph{idempotent} if $s\otimes s \cong s$.

\begin{lemma}
\label{lem_small_idempotents} There are $\otimes$-idempotent objects
$\gam_\sfR \one$ and $L_\sfR \one$ such that
\[ \gam_\sfR \one\otimes(-) \cong i_*i^! \quad \text{and} \quad
L_\sfR \one \otimes (-) \cong j_*j^*.
\] In particular, $\gam_\sfR \one \otimes L_\sfR \one
\cong 0$.
\end{lemma}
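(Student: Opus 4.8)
The plan is to produce the two idempotents from the canonical localisation triangle of the given localisation sequence, evaluated at the tensor unit $\one$, and then to identify the functors $(-)\otimes\gam_\sfR\one$ and $(-)\otimes L_\sfR\one$ with the acyclisation functor $i_*i^!$ and the localisation functor $j_*j^*$ by appealing to the uniqueness of that triangle. First I would record the localisation triangle itself. For every $X\in\sfS$ the counit $i_*i^!X\to X$ extends to an exact triangle $i_*i^!X\to X\to C_X\to\Si\, i_*i^!X$; applying $\Hom_\sfS(i_*Y,-)$ with $Y\in\sfR$ and using the adjunction $(i_*,i^!)$ shows the counit is an isomorphism on these Hom-groups in every degree, so $C_X\in(i_*\sfR)^\perp=j_*\sfT$. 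Since $i_*$ and $j_*$ are fully faithful, $i^!i_*\cong\id_\sfR$ and $j^*j_*\cong\id_\sfT$; and $i_*\sfR={}^\perp(j_*\sfT)$ forces $j^*$ to annihilate $i_*\sfR$, so applying $j^*$ to the triangle shows $C_X\cong j_*j^*X$ with $X\to C_X$ the unit. Thus there is, functorially in $X$, an exact triangle
\[ i_*i^!X\lto X\lto j_*j^*X\lto \Si\, i_*i^!X, \]
and any exact triangle $A\to X\to B\to\Si A$ with $A\in i_*\sfR$ and $B\in j_*\sfT$ is isomorphic to it by a unique isomorphism restricting to $\id_X$, since no nonzero morphism goes from $i_*\sfR$ into $j_*\sfT$.

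Next set $\gam_\sfR\one=i_*i^!\one$ and $L_\sfR\one=j_*j^*\one$. Tensoring the triangle above at $X=\one$ with an arbitrary object $s\in\sfS$ gives an exact triangle
\[ \gam_\sfR\one\otimes s\lto s\lto L_\sfR\one\otimes s\lto \Si(\gam_\sfR\one\otimes s). \]
Because $i_*\sfR$ and $j_*\sfT$ are tensor ideals of $\sfS$ (this is built into the hypotheses), the outer terms lie in $i_*\sfR$ and in $(i_*\sfR)^\perp$ respectively, so the uniqueness clause identifies this triangle with the localisation triangle of $s$. This produces isomorphisms $\gam_\sfR\one\otimes s\cong i_*i^!s$ and $L_\sfR\one\otimes s\cong j_*j^*s$; naturality in $s$ follows once more from the uniqueness of the identifying isomorphism.

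Finally I would evaluate these identities on the idempotents themselves. Full faithfulness of $i_*$ makes the counit $i_*i^!\to\id$ an isomorphism on $i_*\sfR$, hence $\gam_\sfR\one\otimes\gam_\sfR\one\cong i_*i^!(\gam_\sfR\one)\cong\gam_\sfR\one$; dually the unit $\id\to j_*j^*$ is an isomorphism on $j_*\sfT$, so $L_\sfR\one\otimes L_\sfR\one\cong L_\sfR\one$. For the last assertion, $L_\sfR\one\in(i_*\sfR)^\perp$ and $i^!$ kills $(i_*\sfR)^\perp$, because $\Hom_\sfR(Y,i^!X)\cong\Hom_\sfS(i_*Y,X)=0$ for all $Y\in\sfR$ when $X\in(i_*\sfR)^\perp$; therefore $\gam_\sfR\one\otimes L_\sfR\one\cong i_*i^!(L_\sfR\one)=0$.

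The one step I expect to need genuine care is the naturality of $\gam_\sfR\one\otimes(-)\cong i_*i^!$: one must check that the objectwise isomorphisms, which arise from the uniqueness of localisation triangles, assemble into a natural transformation of functors. This works precisely because the identifying isomorphism is unique subject to being $\id$ on the middle term, so it is automatically compatible with every morphism $s\to s'$; the remaining verifications are formal manipulations with the adjunctions.
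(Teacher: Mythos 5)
Your proof is correct, and it is in essence the argument the paper delegates to its citation: the paper's ``proof'' is only a reference to Balmer--Favi \cite[Theorem~2.13]{BaRickard}, and the standard argument there is exactly yours --- form the localisation triangle $i_*i^!X\to X\to j_*j^*X$, specialise to $X=\one$, tensor with $s$, and use that $i_*\sfR$ and $j_*\sfT$ are tensor ideals together with the uniqueness of triangles with outer terms in $i_*\sfR$ and $(i_*\sfR)^\perp$. Your handling of the only delicate point (naturality of the objectwise identifications, via the vanishing of morphisms from $i_*\sfR$ into $j_*\sfT$) is also the standard one, so there is nothing to add.
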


\begin{proof} This is standard, see for instance
\cite[Theorem~2.13]{BaRickard}
\end{proof}

\begin{lemma}
\label{lem_projformula} Given objects $r\in \sfR $, $s\in \sfS$, and
$t\in \sfT $ there are isomorphisms
\[ s\otimes i_*r \cong i_*(i^!s\otimes r) \quad \text{and} \quad
s\otimes j_*t \cong j_*(j^*s\otimes t).
\]
\end{lemma}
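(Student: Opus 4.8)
This is the projection formula for the two fully faithful functors $i_*$ and $j_*$, and the standard route is to deduce it from the $\otimes$-idempotents furnished by Lemma~\ref{lem_small_idempotents} together with the hypothesis that $\sfR$ and $\sfT$ are tensor ideals. First I would treat the case of $i_*r$. Since $i_*$ is fully faithful with right adjoint $i^!$, the counit $i_*i^!\to\id$ presents $i_*i^!$ as a coreflection onto $i_*\sfR$, and by Lemma~\ref{lem_small_idempotents} this coreflection is given by $\gam_\sfR\one\otimes(-)$. Now $i_*r$ already lies in $i_*\sfR$, so applying the coreflection does nothing: $\gam_\sfR\one\otimes i_*r\cong i_*r$. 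On the other hand I would compute $\gam_\sfR\one\otimes i_*r$ a second way by factoring it through $s$: here one uses that $\sfR$ is a tensor ideal, so that $s\otimes i_*r$ again lies in $i_*\sfR$, whence $s\otimes i_*r$ equals its own coreflection $i_*i^!(s\otimes i_*r)$. It remains to identify $i^!(s\otimes i_*r)$ with $i^!s\otimes r$; this follows by adjunction, testing against an arbitrary object $i_*r'$ of $\sfR$: $\Hom(i_*r', i^!(s\otimes i_*r))\cong\Hom(i_*r', s\otimes i_*r)$, and one rewrites the right-hand side using the idempotent description and the already-established unit-side identity to get $\Hom(i_*r',i_*(i^!s\otimes r))\cong\Hom(r',i^!s\otimes r)$. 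By the Yoneda lemma in $\sfS$ (or in $\sfR$), this gives the claimed isomorphism, and one should check it is natural and compatible with the structure maps so that it is really an isomorphism of objects, not just an abstract bijection of hom-sets.

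The argument for $j_*t$ is entirely dual: one uses that $L_\sfR\one\otimes(-)\cong j_*j^*$ is the reflection onto $j_*\sfT$, that $j_*t$ is fixed by it, that $s\otimes j_*t$ lies in $j_*\sfT$ because $\sfT$ is a tensor ideal, and then identifies $j^*(s\otimes j_*t)$ with $j^*s\otimes t$ by adjunction, this time using that $j^*$ is the \emph{left} adjoint of $j_*$ and testing in the other variable. Since both $\sfR$ and $\sfT$ are assumed to be tensor ideals, there is no asymmetry obstructing either half.

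The one point requiring genuine care — and the place I would expect to spend the most effort — is upgrading the hom-set bijections produced by adjunction into an actual isomorphism of objects that is natural in all variables. The cleanest way to avoid checking naturality by hand is to phrase everything functorially from the start: the functor $i_*(i^!(-)\otimes r)$ is right adjoint to some composite, or equivalently one observes that $s\otimes i_*(-)\colon\sfR\to\sfS$ lands in $i_*\sfR$ and so factors as $i_*\circ\Phi$ for a unique functor $\Phi\colon\sfR\to\sfR$, and then identifies $\Phi\cong i^!s\otimes(-)$ by comparing right adjoints (both are right adjoint to $i^!(s\otimes i_*(-))$, or one compares the functors directly via the idempotent $\gam_\sfR\one$). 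I would write the proof in this functorial style rather than element-chasing hom-sets, so that naturality is automatic. Everything else — that $\gam_\sfR\one\otimes i_*r\cong i_*r$, that tensor ideals are preserved — is immediate from the cited lemma and the ideal hypothesis.
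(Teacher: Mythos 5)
Your argument is correct in substance, and it runs on the same fuel as the proof the paper actually offers: the paper proves nothing internally here, but simply defers to Lemma~8.2 of \cite{StevensonActions}, whose argument is precisely the kind of manipulation with the $\otimes$-idempotents of Lemma~\ref{lem_small_idempotents} that you set up. The one step you must make explicit is the ``rewriting'' of $\Hom_\sfS(i_*r',s\otimes i_*r)$ as $\Hom_\sfS(i_*r',i_*(i^!s\otimes r))$: as phrased, it reads as if it presupposes the very isomorphism being proved. What legitimises it is the symmetry and associativity of the tensor product, which lets you slide the idempotent from $i_*r$ onto $s$:
\[
s\otimes i_*r \;\cong\; s\otimes(\gam_\sfR\one\otimes i_*r)\;\cong\;(\gam_\sfR\one\otimes s)\otimes i_*r\;\cong\; i_*i^!s\otimes i_*r\;\cong\; i_*(i^!s\otimes r),
\]
using that the counit $i_*i^!i_*r\to i_*r$ is invertible because $i_*$ is fully faithful, and that $\gam_\sfR\one\otimes(-)\cong i_*i^!$; the last identification is simply how $i^!s\otimes r$ is to be read, namely with the tensor on $\sfR$ inherited from $\sfS$ via the embedding $i_*$, which makes sense because $i_*\sfR$ is a tensor ideal. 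Once this chain is written down it already \emph{is} the desired isomorphism of objects, natural in all variables, so the adjunction--Yoneda superstructure and your closing worries about upgrading hom-set bijections are unnecessary; the same three-term shuffle with $L_\sfR\one$ gives $s\otimes j_*t\cong (L_\sfR\one\otimes s)\otimes j_*t\cong j_*j^*s\otimes j_*t\cong j_*(j^*s\otimes t)$. So: same approach as the cited source, correct modulo making the displayed shuffle explicit, and considerably streamlinable.
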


\begin{proof} The proof of the second isomorphism is almost exactly as
in \cite[Lemma~8.2]{StevensonActions} and the proof of the first is
similar.
\end{proof}

\begin{lemma} The map $\Spc(j^*)\colon\Spc \sfT \to \Spc \sfS$ gives a
homeomorphism onto $\supp_\sfS j_*\sfT $ and $\Spc(i^!)\colon \Spc
\sfR \to \Spc \sfS$ is a homeomorphism onto $\supp_\sfS i_*\sfR
$. Furthermore, there are equalities
\[ \supp_\sfS i_*\sfR = \supp_\sfS \gam_\sfR \one \quad
\text{and} \quad \supp_\sfS j_*\sfT = \supp_\sfS L_\sfR \one,
\] so that both subsets are closed in $\Spc \sfS$.
\end{lemma}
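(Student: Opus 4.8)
The plan is to establish the two homeomorphism statements first, then deduce the identification of supports with the supports of the idempotents. For the homeomorphism onto the image: since $j_*\colon\sfT\to\sfS$ is a strong monoidal exact functor (it is fully faithful and $\sfT$ is a tensor ideal, so the tensor structure is the restricted one with unit $L_\sfR\one$), functoriality of the spectrum gives a spectral map $\Spc(j^*)\colon\Spc\sfT\to\Spc\sfS$. I would first argue that its image is exactly $\supp_\sfS j_*\sfT$. One inclusion: a prime $\sfQ$ of $\sfT$ maps to $(j^*)^{-1}\sfQ$, and since $j_*\sfT\not\subseteq(j^*)^{-1}\sfQ$ (because $j^*j_*\cong\id$, so $j^*$ of a nonzero object of $\sfT$ is nonzero, hence not all of $j_*\sfT$ lands in $\sfQ$), this prime lies in $\supp_\sfS j_*\sfT$. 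Conversely, a prime $\sfP\in\supp_\sfS j_*\sfT$ contains $i_*\sfR$, because by Lemma~\ref{lem_projformula} (the projection formula) $s\otimes i_*r\cong i_*(i^!s\otimes r)$, and since $i_*\sfR$ and $j_*\sfT$ are orthogonal tensor ideals with some $j_*t\notin\sfP$, primeness forces $i_*\sfR\subseteq\sfP$; one then checks $\sfP=(j^*)^{-1}(j_*^{-1}\sfP)$ using that $i_*\sfR$ is the kernel of $j^*$. This is essentially the content of \cite[Proposition~3.11]{BaSpec} or the analysis of how the spectrum behaves under Verdier quotients, and I would cite Balmer for the bijection-on-primes statement, only needing to check that it is a homeomorphism (which follows since supports are preserved: $\supp_\sfT t$ maps onto $\supp_\sfS j_*t\cap\supp_\sfS j_*\sfT$). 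The argument for $\Spc(i^!)$ is symmetric, using $i^!i_*\cong\id$ and the first projection formula isomorphism.

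For the equality $\supp_\sfS i_*\sfR=\supp_\sfS\gam_\sfR\one$: by Lemma~\ref{lem_small_idempotents} we have $\gam_\sfR\one\otimes(-)\cong i_*i^!$, so for any object $s$, $s\in i_*\sfR$ implies $\gam_\sfR\one\otimes s\cong i_*i^!s\cong s$ (as $i^!i_*\cong\id$), which shows $\supp_\sfS i_*r=\supp_\sfS(\gam_\sfR\one\otimes i_*r)\subseteq\supp_\sfS\gam_\sfR\one$ by the standard support formula $\supp(a\otimes b)\subseteq\supp a\cap\supp b$. Hence $\supp_\sfS i_*\sfR\subseteq\supp_\sfS\gam_\sfR\one$. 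For the reverse, note $\gam_\sfR\one\cong\gam_\sfR\one\otimes\one\cong i_*i^!\one\in i_*\sfR$, so $\supp_\sfS\gam_\sfR\one\subseteq\supp_\sfS i_*\sfR$. The equality $\supp_\sfS j_*\sfT=\supp_\sfS L_\sfR\one$ follows identically, using $L_\sfR\one\otimes(-)\cong j_*j^*$ and $L_\sfR\one\cong j_*j^*\one\in j_*\sfT$. Finally, $\supp_\sfS\gam_\sfR\one$ and $\supp_\sfS L_\sfR\one$ are closed by the very definition of the Zariski topology on $\Spc\sfS$ (supports of single objects form the basis of closed sets).

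I expect the main obstacle to be the careful bookkeeping in the homeomorphism part: verifying that $\Spc(j^*)$ is injective with the stated image, and that it is a closed (equivalently open onto its image) map, rather than merely continuous. The cleanest route is to observe that $j^*\colon\sfS\to\sfT$ is a Verdier localisation with kernel $i_*\sfR$, invoke the general fact \cite[Proposition~3.11]{BaSpec} that the spectrum of such a quotient is homeomorphic to the subspace of $\Spc\sfS$ consisting of primes containing the kernel, and then separately identify that subspace with $\supp_\sfS j_*\sfT$ via the projection-formula/primeness argument above. Everything else is a short formal manipulation with the idempotents.
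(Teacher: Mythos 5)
Your proposal is correct, and its skeleton matches the paper's: both rest on Balmer's Proposition~3.11 to get a homeomorphism from $\Spc \sfT$ onto $\{\sfP\in\Spc\sfS \mid i_*\sfR\subseteq\sfP\}$, then identify that subspace with $\supp_\sfS j_*\sfT$, and get closedness because supports of single objects are closed by definition. The one genuine difference is how the identification is made: the paper uses the idempotents throughout, observing that $\gam_\sfR\one\otimes L_\sfR\one\cong 0$ forces every prime to contain one of them while the localisation triangle prevents it from containing both, whence $\sfR\subseteq\sfP \Leftrightarrow L_\sfR\one\notin\sfP \Leftrightarrow j_*\sfT\nsubseteq\sfP$, and the equalities $\supp_\sfS i_*\sfR=\supp_\sfS\gam_\sfR\one$, $\supp_\sfS j_*\sfT=\supp_\sfS L_\sfR\one$ then fall out of the same chain of equivalences; you instead use the projection formula (Lemma~\ref{lem_projformula}) plus primeness to show that a prime missing some $j_*t$ must contain all of $i_*\sfR$, and prove the support equalities separately by a direct double inclusion with the idempotents of Lemma~\ref{lem_small_idempotents}. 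Both routes are sound and of comparable length; the paper's has the small advantage of yielding the subset identification and the idempotent support equalities in one pass, while yours is slightly more self-contained in that it does not need the ``every prime contains exactly one idempotent'' observation for the homeomorphism part. One small inaccuracy worth fixing: the map $\Spc(j^*)$ is induced contravariantly by the monoidal quotient functor $j^*\colon\sfS\to\sfT$, not by $j_*$; indeed $j_*$ is not strong monoidal in the unital sense, since it sends the unit of $\sfT$ to $L_\sfR\one$ rather than $\one$. This does not affect your argument, which in fact works with $(j^*)^{-1}(-)$ as it should.
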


\begin{proof} The arguments for $\sfR $ and $\sfT $ are similar so we
only prove the result for $\sfT $. We already know from
\cite[Proposition~3.11]{BaSpec} that the localisation $j^*$ induces a
homeomorphism
\[ \Spc \sfT \stackrel{\sim}{\to} \{\sfP\in \Spc \sfS \; \vert
\; \sfR \subseteq \sfP\},
\] and it remains to identify this subset with the support of $\sfR
^{\perp}=j_*\sfT $.

First observe that since $\gam_\sfR \one \otimes L_\sfR
\one \cong 0$ every prime tensor ideal must contain one of these
idempotents. Furthermore, no proper ideal can contain both idempotents
as they build $\one$ via the localisation triangle
\[ \gam_\sfR\one \to \one \to L_\sfR\one \to \Sigma
\gam_\sfR\one.
\] So for $\sfP\in \Spc \sfS$ we have $\sfR \subseteq
\sfP$ if and only if $L_\sfR \one\notin \sfP$ if
and only if $j_*\sfT \nsubseteq \sfP$. This completes the
argument via the following trivial fact
\[ \supp_\sfS j_*\sfT = \{\sfP \in \Spc \sfS \; \vert \;
j_*\sfT \nsubseteq \sfP\}.
\] The final statement now follows as
\begin{align*} 
\supp_\sfS L_\sfR \one &= \{
\sfP \in \Spc \sfS \; \vert \; L_\sfR \one\notin
\sfP\} \\ &= \{ \sfP \in \Spc \sfS \; \vert \; \gam_\sfR
\one\in \sfP\} \\ &= \{ \sfP \in \Spc \sfS \;
\vert \; \sfR \subseteq \sfP\}\\ &= \Spc(j^*)(\Spc\sfT ) \\ &=
\supp_\sfS j_*\sfT. \qedhere
\end{align*}
\end{proof}

From this point onward let us be lazy and identify $\sfR $ and $\sfT $
with thick subcategories of $\sfS$ and view their spectra as subsets
of $\Spc \sfS$.

\begin{theorem}\label{th:disconnected}
The subsets $\Spc \sfR = \supp_\sfS \sfR $ and $\Spc
\sfT =\supp_\sfS \sfT $ of $\Spc \sfS$ are open and closed, and there
is a decomposition
\[ \Spc \sfS = \Spc \sfR \coprod \Spc \sfT.
\] In particular, if neither $\sfR $ nor $\sfT $ is the zero ideal the
space $\Spc \sfS$ is disconnected.
\end{theorem}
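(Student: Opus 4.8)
The plan is to read off the decomposition from the two $\otimes$-idempotents $\gam_\sfR\one$ and $L_\sfR\one$ produced by Lemma~\ref{lem_small_idempotents}. By the preceding lemma, the subset of $\Spc\sfS$ identified with $\Spc\sfR$ is $\supp_\sfS i_*\sfR = \supp_\sfS\gam_\sfR\one$, and the subset identified with $\Spc\sfT$ is $\supp_\sfS j_*\sfT = \supp_\sfS L_\sfR\one$; both are closed in $\Spc\sfS$, being supports of single objects. So the task reduces to checking that these two closed sets are disjoint and together exhaust $\Spc\sfS$.

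For disjointness, if a prime $\sfP$ contained neither $\gam_\sfR\one$ nor $L_\sfR\one$, then since $\gam_\sfR\one\otimes L_\sfR\one\cong 0$ lies in $\sfP$, primeness of $\sfP$ would force one of the two factors into $\sfP$, a contradiction. For the covering, the localisation triangle $\gam_\sfR\one\to\one\to L_\sfR\one\to\Sigma\gam_\sfR\one$ shows that $\one$ lies in the thick subcategory generated by $\gam_\sfR\one$ and $L_\sfR\one$, so no proper ideal can contain both; hence every prime $\sfP$ omits at least one of them, i.e.\ lies in $\supp_\sfS\gam_\sfR\one$ or in $\supp_\sfS L_\sfR\one$. (Both observations are in essence already recorded in the proof of the preceding lemma.) This yields the set-level equality $\Spc\sfS = \Spc\sfR\coprod\Spc\sfT$; since each piece is closed it is the complement of the other, hence also open, so both pieces are clopen.

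Finally, for the disconnectedness assertion, recall from \cite{BaSpec} that the Balmer spectrum of an essentially small tensor triangulated category is empty exactly when the category is zero; hence $\Spc\sfR\ne\emptyset$ whenever $\sfR\ne 0$ and $\Spc\sfT\ne\emptyset$ whenever $\sfT\ne 0$, and a space presented as a disjoint union of two nonempty clopen subsets is disconnected. I do not expect a real obstacle here: the substantive work is already packaged in Lemma~\ref{lem_small_idempotents} and in the lemma just before the theorem. The only point requiring any care is this final degenerate-case analysis, where one invokes that foundational nonemptiness fact; without the hypothesis that neither $\sfR$ nor $\sfT$ vanishes the stated decomposition could be trivial and would carry no information about connectedness.
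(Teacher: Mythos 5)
Your proof is correct, and it rests on exactly the same ingredients as the paper's: the orthogonal idempotents $\gam_{\sfR}\one$ and $L_{\sfR}\one$ of Lemma~\ref{lem_small_idempotents}, and the identification $\Spc\sfR=\supp_\sfS\gam_{\sfR}\one$, $\Spc\sfT=\supp_\sfS L_{\sfR}\one$ from the lemma preceding the theorem. The only divergence is in how openness is obtained: the paper's main line argues that $\Spc\sfR$ is generalisation closed (via the description $\{\sfP\mid\sfT\subseteq\sfP\}$ from Balmer's localisation result), specialisation closed (being a support), and constructible, hence clopen in the spectral space $\Spc\sfS$; you instead first prove disjointness (from $\gam_{\sfR}\one\otimes L_{\sfR}\one\cong 0$ and primeness) and covering (no proper ideal contains both idempotents, by the localisation triangle), and then get openness for free since each closed piece is the complement of the other. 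Your route is more elementary, avoiding the spectral-topology facts, and the paper itself flags it as viable in its parenthetical remark that the idempotent argument ``can also be used to argue that both subsets in question are open''; you are also slightly more explicit than the paper about why nonvanishing of $\sfR$ and $\sfT$ gives nonempty pieces, which is a point worth recording.
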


\begin{proof} By \cite[Proposition~3.11]{BaSpec} and the last lemma
respectively we can describe $\Spc \sfR $ as both
\[ \{\sfP\in \Spc \sfS \; \vert \; \sfT \subseteq \sfP\}
\quad \text{and} \quad \supp_\sfS\sfR .
\] It is clear from the first description that $\Spc \sfR $ is
generalisation closed and from the second that $\Spc \sfR $ is
specialisation closed. By the last lemma this subset is constructible
(since it is the support of an object) and hence it is both closed and
open. The argument for $\sfT $ is the same. The existence of the
claimed decomposition is clear: by Lemma \ref{lem_small_idempotents}
we have $\otimes$-orthogonal Rickard idempotents, precisely one of
which must be contained in any non-zero proper prime ideal (this can
also be used to argue that both subsets in question are open).
\end{proof}

For completeness we give an easier proof of a stronger result in the
rigid case.

\begin{proposition} Suppose $\sfS$ is as above but is furthermore
assumed to be rigid. Then $\sfS$ is equivalent to $\sfR \oplus
\sfT$. In particular the spectrum decomposes as in the above theorem.
\end{proposition}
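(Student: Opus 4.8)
The plan is to use rigidity to show that the localisation triangle
\[ \gam_\sfR\one \lra \one \lra L_\sfR\one \lra \Si\gam_\sfR\one \]
of Lemma~\ref{lem_small_idempotents} splits, so that the unit decomposes as $\one\cong\gam_\sfR\one\oplus L_\sfR\one$ into two orthogonal $\otimes$\nobreakdash-idempotents, and then to tensor this decomposition with an arbitrary object.

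First I would record that in a rigid tensor triangulated category every thick tensor ideal $\sfI$ is stable under the duality $(-)^\vee$: if $X\in\sfI$ then $X\otimes X^\vee\in\sfI$, and $X^\vee$ is a retract of $X^\vee\otimes X\otimes X^\vee$ by the triangle identities, so $X^\vee\in\sfI$. Combining this with Lemma~\ref{lem_rigid_ideal}, the subcategory $i_*\sfR$ and both of its perpendicular categories $(i_*\sfR)^\perp=j_*\sfT$ and ${}^\perp(i_*\sfR)$ are tensor ideals closed under $(-)^\vee$. Using the contravariant equivalence $(-)^\vee\colon\sfS\xra{\sim}\sfS^{\op}$ and the resulting isomorphism $\Hom_\sfS(X,Y)\cong\Hom_\sfS(Y^\vee,X^\vee)$, one then checks $(i_*\sfR)^\perp={}^\perp(i_*\sfR)$: for $Y\in\sfS$ the conditions $\Hom_\sfS(X,Y)=0$ for all $X\in i_*\sfR$ and $\Hom_\sfS(Y^\vee,X)=0$ for all $X\in i_*\sfR$ are equivalent (reindexing by $X\mapsto X^\vee$), and the latter says $Y^\vee\in{}^\perp(i_*\sfR)$, equivalently $Y\in{}^\perp(i_*\sfR)$ since that ideal is dual-stable. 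Hence $j_*\sfT$ is orthogonal to $i_*\sfR$ on both sides. Since $\gam_\sfR\one\cong i_*i^!\one$ lies in $i_*\sfR$ and $L_\sfR\one\cong j_*j^*\one$ lies in $j_*\sfT$, the connecting morphism $L_\sfR\one\to\Si\gam_\sfR\one$ belongs to $\Hom_\sfS(j_*\sfT,i_*\sfR)=0$; it therefore vanishes and the triangle splits, giving $\one\cong\gam_\sfR\one\oplus L_\sfR\one$.

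Tensoring this with an arbitrary object $s$ and invoking Lemma~\ref{lem_small_idempotents} yields a natural splitting $s\cong i_*i^!s\oplus j_*j^*s$. Together with the two-sided orthogonality of $i_*\sfR$ and $j_*\sfT$ and the full faithfulness of $i_*$ and $j_*$, this shows that the exact functor $\sfR\oplus\sfT\to\sfS$ sending $(r,t)$ to $i_*r\oplus j_*t$ is an equivalence; it is monoidal for the evident product tensor structure, the units being $\gam_\sfR\one$ on $i_*\sfR$ and $L_\sfR\one$ on $j_*\sfT$ (note $\gam_\sfR\one\otimes i_*r\cong i_*i^!i_*r\cong i_*r$, and dually), with $i_*r\otimes i_*r'\cong i_*(r\otimes r')$ and $j_*t\otimes j_*t'\cong j_*(t\otimes t')$ by the projection formula of Lemma~\ref{lem_projformula} and the mixed terms zero. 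For the spectrum, for any product of essentially small tensor triangulated categories every prime tensor ideal contains exactly one of the two orthogonal idempotent units, so $\Spc\sfS\cong\Spc\sfR\coprod\Spc\sfT$, recovering the decomposition of Theorem~\ref{th:disconnected}. I expect the only point needing care to be the identification $(i_*\sfR)^\perp={}^\perp(i_*\sfR)$, i.e.\ the vanishing of $\Hom_\sfS(j_*\sfT,i_*\sfR)$; once that is in place everything is formal, and since Lemmas~\ref{lem_rigid_ideal}, \ref{lem_small_idempotents}, and \ref{lem_projformula} are already available no real obstacle remains.
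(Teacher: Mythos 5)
Your proposal is correct and takes essentially the same route as the paper: rigidity plus dual-stability of thick tensor ideals forces the connecting morphism in the localisation triangle $\gam_\sfR\one \to \one \to L_\sfR\one$ to vanish, so $\one\cong\gam_\sfR\one\oplus L_\sfR\one$ and $\sfS$ splits. The only (immaterial) difference is how the vanishing is obtained: the paper computes $\Hom(L_\sfR\one,\Sigma^i\gam_\sfR\one)\cong\Hom(\Sigma^{-i}\one,(L_\sfR\one)^\vee\otimes\gam_\sfR\one)=0$ using $\sfR\cap\sfT=0$, while you deduce the two-sided orthogonality $(i_*\sfR)^\perp={}^\perp(i_*\sfR)$ from duality, and you additionally spell out the splitting of $\sfS$ and of the spectrum, which the paper leaves to the reader.
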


\begin{proof} Using rigidity of $\sfS$ we have, for $i\in \bbZ$,
\[ 
\Hom(L_\sfR\one, \Sigma^i \gam_\sfR\one) \cong
\Hom(\Sigma^{-i}\one, (L_\sfR\one)^\vee \otimes
\gam_\sfR\one) \cong 0,
\] 
the final isomorphism since thick tensor ideals are closed under
$(-)^\vee$ and $\sfR \cap \sfT = 0$. Hence the localisation triangle
\[ 
\gam_\sfR\one \to \one \to L_\sfR\one \to \Sigma
\gam_\sfR\one
\] 
splits yielding $\one \cong \gam_\sfR\one \oplus L_\sfR\one$. 
It is now clear that $\sfS$ splits.
\end{proof}

\begin{corollary}\label{co:disconnected}
 If $\sfS$ has a decomposition, as in the assumptions
of this section, given by non-zero tensor ideals and $\one$ is
indecomposable then $\sfS$ is not rigid.\qed
\end{corollary}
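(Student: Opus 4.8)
The plan is to obtain Corollary~\ref{co:disconnected} as the contrapositive of the preceding Proposition, once one observes that both of the relevant Rickard idempotents are nonzero. So I would argue by contradiction. Assume that $\sfS$ is rigid, with a localisation sequence $\sfR\to\sfS\to\sfT$ in which $\sfR$ and $\sfT$ are nonzero tensor ideals. By the Proposition, rigidity forces the localisation triangle $\gam_\sfR\one\to\one\to L_\sfR\one\to\Sigma\gam_\sfR\one$ to split, so that $\one\cong\gam_\sfR\one\oplus L_\sfR\one$.

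It then remains to check that neither summand vanishes, and here I would invoke Lemma~\ref{lem_small_idempotents}. If $\gam_\sfR\one\cong 0$, then the functor $i_*i^!\cong\gam_\sfR\one\otimes(-)$ is identically zero; but $i_*$ is fully faithful with right adjoint $i^!$, so $i^!i_*\cong\id_\sfR$, whence $i_*r\cong i_*i^!i_*r\cong 0$ for every $r\in\sfR$, forcing $\sfR=0$, a contradiction. Symmetrically, using $j^*j_*\cong\id_\sfT$ and $L_\sfR\one\otimes(-)\cong j_*j^*$, one gets $L_\sfR\one\not\cong 0$. Hence $\one$ is a coproduct of two nonzero objects, contradicting the indecomposability of $\one$; therefore $\sfS$ is not rigid.

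There is no real obstacle: the statement is formal, given the Proposition and Lemma~\ref{lem_small_idempotents}. The only point needing a line of justification is the nonvanishing of the two idempotents, and that rests entirely on the fully faithfulness of the embeddings $i_*$ and $j_*$ built into the definition of a localisation sequence.
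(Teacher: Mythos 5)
Your argument is correct and is essentially the paper's own: the corollary is stated there as an immediate consequence of the preceding Proposition (rigidity forces $\one\cong\gam_\sfR\one\oplus L_\sfR\one$), and you simply make explicit the contrapositive together with the observation that neither idempotent can vanish when $\sfR$ and $\sfT$ are nonzero. Your justification of that nonvanishing via full faithfulness of $i_*$ and $j_*$ and Lemma~\ref{lem_small_idempotents} is exactly the routine check the paper leaves to the reader.
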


\bibliographystyle{amsplain}
\providecommand{\bysame}{\leavevmode\hbox to3em{\hrulefill}\thinspace}

\end{document}